\newcounter{parentnumber}
\newcommand{\margincolor}{red}      
\definecolor{darkgreen}{rgb}{0,0.7,0}
\newcounter{margincounter}
\newcommand{\marginnum}{
\ifnum\value{margincounter}<10
\textcolor{\margincolor}{\begin{picture}(0,0)\put(2.2,2.4){\circle{9}}\end{picture}\footnotesize\arabic{margincounter}}
\else\ifnum\value{margincounter}<100
\textcolor{\margincolor}{\begin{picture}(0,0)\put(4.256,2.5){\circle{11}}\end{picture}\footnotesize\arabic{margincounter}}
\else
\textcolor{\margincolor}{\begin{picture}(0,0)\put(6.8,2.5){\circle{14}}\end{picture}\footnotesize\arabic{margincounter}}
\fi\fi
}
\newcommand{\into}{\hookrightarrow}
\newcommand{\deriv}[3][]{\frac{d^{#1} #2}{d{#3}^{#1}}}
\newcommand{\pderiv}[3][]{\frac{\partial^{#1} #2}{\partial {#3}^{#1}}}
\newcommand{\abs}[1]{\left\lvert#1\right\rvert}
\newcommand{\Z}{\ensuremath{\mathbb{Z}}}
\newcommand{\C}{\ensuremath{\mathbb{C}}}
\newcommand{\Q}{\ensuremath{\mathbb{Q}}}
\newcommand{\A}{\ensuremath{\mathbb{A}}}
\newcommand{\Bl}{\ensuremath{\mathcal{B}\ell}}
\renewcommand{\P}{\ensuremath{\mathbb{P}}}
\renewcommand{\H}{\ensuremath{\mathcal{H}}}
\renewcommand{\O}{\ensuremath{\mathcal{O}}}
\renewcommand{\L}{\ensuremath{\mathcal{L}}}
\renewcommand{\bar}[1]{\overline{#1}}
\renewcommand{\Im}{\ensuremath{\operatorname{Im}}}
\renewcommand{\wr}{\ensuremath{\operatorname{wr}}}
\renewcommand{\vec}[1]{\overrightarrow{#1}}
\renewcommand{\mod}{\mathrm{mod}}
\let\oldparallel\parallel
\renewcommand{\parallel}{\!\!\oldparallel\!\!}
\DeclareMathOperator{\Hilb}{Hilb}
\DeclareMathOperator{\Aut}{Aut}
\DeclareMathOperator{\Defm}{Def}
\DeclareMathOperator{\Obs}{Obs}
\DeclareMathOperator{\Spec}{Spec}
\renewcommand{\Im}{\mathop{\mathrm{Im}}}
\newcommand{\cM}{\mathcal{M}}
\theoremstyle{plain}
\newtheorem{theorem}{Theorem}
\numberwithin{theorem}{section}
\newtheorem{thm}[theorem]{Theorem}
\newtheorem{prop}[theorem]{Proposition}
\newtheorem{cor}[theorem]{Corollary}
\newtheorem{lem}[theorem]{Lemma}
\newtheorem{identity}[theorem]{Identity}
\theoremstyle{definition}
\newtheorem{Definition/Theorem}[theorem]{Definition/Theorem}
\newtheorem{Definition/Proposition}[theorem]{Definition/Proposition}
\newtheorem{Def}[theorem]{Definition}
\newtheorem{Corollary/Definition}[theorem]{Corollary/Definition}
\theoremstyle{remark}
\newtheorem{cl}[theorem]{Claim}
\newtheorem{rem}[theorem]{Remark}
\newtheorem{notation}[theorem]{Notation}
\newcommand{\Mbar}{\bar{\cM}}
\renewcommand{\setminus}{\smallsetminus}
\DeclareMathOperator{\rig}{rig}
\DeclareMathOperator{\tw}{tw}
\DeclareMathOperator{\untw}{untw}
\DeclareMathOperator{\loc}{loc}
\DeclareMathOperator{\vir}{vir}
\DeclareMathOperator{\fix}{fix}
\DeclareMathOperator{\mov}{mov}
\DeclareMathOperator{\ch}{ch}
\DeclareMathOperator{\Sym}{Sym}
\DeclareMathOperator{\lcm}{lcm}
\DeclareMathOperator{\pr}{pr}
\DeclareMathOperator{\val}{val}
\DeclareMathOperator{\Mark}{Mark}
\DeclareMathOperator{\Mon}{Mon}
\DeclareMathOperator{\Mov}{Mov}
\DeclareMathOperator{\VEval}{VEval}
\DeclareMathOperator{\ZPart}{ZPart}
\DeclareMathOperator{\Part}{Part}
\DeclareMathOperator{\MultiPart}{MultiPart}
\DeclareMathOperator{\Comb}{Comb}
\DeclareMathOperator{\Laur}{Coef}
\DeclareMathOperator{\nov}{nov}
\DeclareMathOperator{\RC}{\mathbf{RC}}
\DeclareMathOperator{\ev}{ev}
\DeclareMathOperator{\Graphs}{Graphs}
\DeclareMathOperator{\Contr}{Contr}
\DeclareMathOperator{\Mult}{Mult}
\DeclareMathOperator{\Stat}{Stat}
\DeclareMathOperator{\noneq}{ne}
\renewcommand{\sf}{\mathrm{sf}}
\author{Rob Silversmith} \address{Warwick Mathematics Institute, University of Warwick, Coventry, UK, CV4 7AL}
\email{Rob.Silversmith@warwick.ac.uk} \date{} \title{Gromov-Witten invariants of
  $\Sym^d\P^r$}
\subjclass[2020]{14N35}
\begin{document}
\begin{abstract}
  We give a graph-sum algorithm that expresses any genus-$g$ Gromov-Witten invariant of the symmetric product orbifold $\Sym^d\P^r:=[(\P^r)^d/S_d]$ in terms of ``Hurwitz-Hodge integrals'' -- integrals over (compactified) Hurwitz spaces. We apply the algorithm to prove a mirror-type theorem for $\Sym^d\P^r$ in genus zero. The theorem states that a generating function of Gromov-Witten invariants of $\Sym^d\P^r$ is equal to an explicit power series $I_{\Sym^d\P^r}$, conditional upon a conjectural combinatorial identity. This is a first step in the direction of proving Ruan's Crepant Resolution Conjecture for the resolution $\Hilb^{(d)}(\P^2)$ of the coarse moduli space of $\Sym^d\P^2.$
\end{abstract}
\maketitle
\section{Introduction}
Over the last 20 years, following predictions from string theory
\cite{CandelasDeLaOssaGreenParkes1991}, mathematicians have proven a
series of results known as \emph{mirror theorems}; an incomplete list
is
\cite{Givental1998quintic,LianLiuYau1997iii,Givental1998toric,BatyrevCiocanFontanineKimVanStraten2000,Zinger2009,Li2011,JarvisKimura2002,CoatesCortiIritaniTseng2015,CheongCiocanFontanineKim2015,FangLiuZong2013,FangLiuZong2020,CoatesCortiIritaniTseng2014}. These
theorems reveal elegant patterns and structures embedded in the
collection of (usually genus-zero) Gromov-Witten invariants of a fixed
target manifold or orbifold $X$. They also allow for easy computation
of these invariants in certain cases where direct computation involves
difficult combinatorial computations. However, the scope of these
results, and much of Gromov-Witten theory in general, is limited to
the world of toric geometry; in all cases above, $X$ is a complete
intersection in a toric variety or stack (or a deformation
thereof). The essential reason for this is that computing a
Gromov-Witten invariant of a toric variety can be reduced, via the
Atiyah-Bott localization theorem, to evaluating a certain sum over
labeled graphs.

In this paper, we study the Gromov-Witten invariants of $\Sym^d\P^r$,
which has a torus action, but without a dense orbit. Some aspects of
the theory remain similar to the toric case, many new obstacles must
be dealt with, and some interesting new behaviors appear. In the first
half of the paper we use localization to give an algorithm expressing any Gromov-Witten
invariant of $\Sym^d\P^r$ explicitly in terms of \textit{Hurwitz-Hodge
  integrals} (Theorem \ref{thm:Algorithm}). Hurwitz-Hodge integrals
are numerical invariants of a representation of a finite group $G$;
they are defined as integrals over compactified Hurwitz
spaces. Computing them in general is a main stumbling block in
orbifold Gromov-Witten theory.

In order to apply localization to the case of $\Sym^d\P^r$, we must
carefully describe the torus-invariant curves on $\Sym^d\P^r$ and
their deformation theory. We do this in Sections \ref{sec:TAction} and
\ref{sec:VirtualNormalBundle}. (These sections contain the main
geometric content of the paper.)


In the second half of the paper, we apply the above
algorithm in a recursive form (Theorem \ref{thm:ConeCharacterization}) to prove a 
genus-zero mirror-type theorem for $\Sym^d\P^r$ (Theorem
\ref{thm:MirrorTheorem}), which was not possible using existing
techniques. The theorem, which is conditional upon two explicit combinatorial identities we were unable to prove, gives a formula for a generating
function of Gromov-Witten invariants of $\Sym^d\P^r$. The proof of Theorem \ref{thm:MirrorTheorem} is notably combinatorial, and the specific combinatorics are of independent interest, see Remark \ref{rem:Combinatorics}. Theorem
\ref{thm:MirrorTheorem} is also the
only known mirror theorem for a nonabelian orbifold, besides single
points $[\bullet/G]$. 


 \medskip
\noindent\textbf{Corollary \ref{cor:IandJ}}
Assuming Identities \ref{lem:WhatIsTauIdentity} and \ref{lem:WhatIsTauSigma}, for any $d,r\ge1$ there is an
equality 
$$I_{\Sym^d\P^r}=J_{\Sym^d\P^r}\quad\mod(\mathbf{x})^2,$$ where
 $J_{\Sym^d\P^r}$ is a generating function of genus-zero Gromov-Witten
  invariants of $\Sym^d\P^r$ (see Section \ref{sec:OrbifoldGWTheory}),
  and 
$I_{\Sym^d\P^r}$ is the explicit power series \eqref{eq:IDef}.

\medskip

\begin{rem}
In Theorem \ref{thm:MirrorTheorem} and Corollary \ref{cor:IandJ}, $I_{\Sym^d\P^r}$ is only defined up to first
  order in $\mathbf{x}$ --- it would be very desirable to generalize this mirror theorem so that it
  involves a power series $I'$ with arbitrary powers of $\mathbf{x}$.
The primary obstacle is that one must first
  \textit{produce} such a power series --- and then check
  that it satisfies the conditions of Theorem \ref{thm:ConeCharacterization}. The power
  series \eqref{eq:IDef} was produced after much computer experimentation, and
  we were unable to generalize it to arbitrary order in $\mathbf{x}$. Furthermore, the combinatorics required to prove that  $I_{\Sym^d\P^r}$ satisfies the conditions of Theorem \ref{thm:ConeCharacterization} are extremely complicated, and we were only able to establish them conditional upon the conjectural combinatorial identities in Section \ref{sec:Appendix}. While there are some systematic methods for producing such ``$I$-functions'' (e.g.
  \cite{CiocanFontanineKim2013Big}), applying these methods to
  $\Sym^d\P^r$ (or any nonabelian orbifold) results in the
  \textit{zeroth} order truncation of $I_{\Sym^d\P^r}$ in
  $\mathbf{x}$, losing \emph{all} combinatorial structure.
\end{rem}

We have three motivations for working with $\Sym^d\P^r$.
\begin{itemize}
\item $\Sym^d\P^r$ is very concrete, and is therefore a good starting
  point for studying both non-toric and non-abelian behavior. While
  the natural $(\C^*)^{r+1}$-action has infinitely many orbits, it
  also has finitely many fixed points; in this sense $\Sym^d\P^r$ is
  not too much more complicated than a toric variety. On the other
  hand, it is complicated enough that studying its Gromov-Witten
  invariants requires various new methods, which we expect to be
  useful for studying the Gromov-Witten theory of other non-toric and
  non-abelian
\item \textit{The crepant resolution conjecture.} Following physical
  predictions, Ruan \cite{Ruan2006}, Bryan-Graber
  \cite{BryanGraber2009}, and Coates-Iritani-Tseng
  \cite{CoatesIritaniTseng2009} made a conjecture relating the
  Gromov-Witten invariants of an orbifold $X$ to those of a crepant
  resolution of its coarse moduli space. This conjecture has been
  proven in the context of toric geometry
  \cite{CoatesIritaniJiang2014}. However, the crepant resolution
  $\Hilb^{(d)}(\P^2)$ of the coarse moduli space of $\Sym^d\P^r$ was
  one of Ruan's motivating examples; this case has now been open for
  over a decade. Theorem \ref{thm:MirrorTheorem} is a first step
  towards this case.
\item \textit{Higher genus invariants of projective space.} Costello's
  thesis expressed the genus $g$ Gromov-Witten invariants of a smooth
  projective variety $X$ in terms of the genus-zero Gromov-Witten
  invariants of $\Sym^{g+1}X.$ Theorem \ref{thm:MirrorTheorem}
  provides an efficient way of encoding the latter for $X=\P^r.$ It
  may be possible to combine Costello's result with ours to find
  explicit formulas for genus-$g$ Gromov-Witten invariants of
  $\P^r$.
\end{itemize}

%
%

We briefly describe the difficulties caused by the fact that
$\Sym^d\P^r$ is not toric. To do so, we first broadly outline the
proof of Coates-Corti-Iritani-Tseng of the mirror theorem for a toric
stack $X$ \cite{CoatesCortiIritaniTseng2015}. The two main ingredients
are
\begin{enumerate}
\item An algorithm for expressing Gromov-Witten invariants of $X$ in
  terms of Hurwitz-Hodge integrals; this is supplied by localization
  calculations of Johnson \cite{Johnson2014} and Liu
  \cite{Liu2013}. The localization technique roughly involves
  integrating over the moduli space of \textit{torus-invariant} curves
  $C\subseteq X$, which is easy: this moduli space is a finite
  collection of points, in bijection with codimension-1 cones in the
  fan of $X$. The hardest part of the calculation is to find an
  explicit expression for the integrand, which is defined in terms of
  the deformation theory of the curves $C$.
\item A technique of Brown \cite{Brown2014}, which reinterprets the
  above algorithm as follows. To each torus-fixed point $\sigma\in X$
  is associated a power series $\mathbf{f}_\sigma$ in a variable $z$;
  these power series together encode all genus-zero Gromov-Witten
  invariants of $X$. Each power series $\mathbf{f}_\sigma$ has a
  collection of simple poles, and using the algorithm, one shows
  that the power series satisfy a recursion in the following sense:
  the residue of $\mathbf{f}_\sigma$ at a pole $w$ is expressed as a
  linear combination of values $\mathbf{f}_{\sigma'}(w)$ for other
  fixed points $\sigma'\ne\sigma$ (such that $\mathbf{f}_{\sigma'}$
  has no pole at $w$). The recursion uniquely defines
  $\mathbf{f}_\sigma$ for all $\sigma,$ up to some change of
  variables.
\end{enumerate}
The outline of the proof of Theorem \ref{thm:MirrorTheorem} is
similar, but with the following differences:
\begin{enumerate}[label=(\arabic*$'$)]
\item As mentioned, Theorem \ref{thm:Algorithm} expresses any
  Gromov-Witten invariant of $\Sym^d\P^r$ in terms of Hurwitz-Hodge
  integrals. However, both parts of the calculation are substantially
  more difficult than in the toric case. The moduli space of
  torus-invariant curves is not finite --- rather, it is positive
  dimensional, disconnected, and quite complicated. Luckily, we are
  able to give a complete characterization of the moduli space
  (Theorem \ref{thm:FixedLocusLosevManin}). Our characterization is
  concrete enough to allow us to compute the requisite integrals. The
  deformation theory of torus-fixed curves is difficult for
  essentially the same reason, but again the computation can be
  carried out fully (Section \ref{sec:VirtualNormalBundle}).
\item Theorem \ref{thm:ConeCharacterization} is analogous to Brown's
  description above --- we again have a power series
  $\mathbf{f}_\sigma$ attached to each torus-fixed point
  $\sigma\in\Sym^d\P^r.$ However, these power series no longer have
  simple poles, but may have poles of arbitrarily high order. The
  algorithm again gives a recursion relation, this time expressing
  \textit{any negative-power} Laurent coefficient of
  $\mathbf{f}_\sigma$ in terms of \textit{nonnegative-power} Laurent
  coefficients of $\mathbf{f}_{\sigma'}$ for other fixed points
  $\sigma'.$ We wish to highlight this feature, both because it is
  new, and because it is expected to appear in the Gromov-Witten
  theory of any nontoric variety with a nontrivial torus action. (The
  fact that there are only simple poles in the toric case should be
  viewed as exceptional.) We hope that this first example might
  provide clues for proving other nontoric mirror theorems.
  \end{enumerate}
  
  \begin{rem}\label{rem:Combinatorics}We also wish to draw attention to the fact that the combinatorial structure encoded in Theorems \ref{thm:ConeCharacterization} and (especially) \ref{thm:MirrorTheorem} is much more intricate than in the toric case --- so much so that we were not able to give an unconditional version of Theorem \ref{thm:MirrorTheorem} despite the apparent fact that combinatorial complexity is the only hurdle --- for example, the Chu-Vandermonde identity played a crucial role in the proof of Theorem \ref{thm:MirrorTheorem}. We hope that the combinatorics in this paper, though not quite complete, will be a useful case study in proving mirror theorems where high-order poles appear. The generating functions in this paper exhibit rich combinatorial structure, and are surely important for further understanding mirror symmetry for symmetric products, so we believe a more systematic study is worthwhile in the future. This is especially true of the generating functions appearing on pages \pageref{eq:GDef}--\pageref{eq:MoreGeneralF}, which are not specific to $\Sym^d\P^r$ but instead deal with twisted Gromov-Witten invariants of an orbifold \emph{point}. (We note that some of the relevant framework may already exist, e.g. in the integrable systems literature --- though we were unable to find anything that would imply Identities \ref{lem:WhatIsTauIdentity} and \ref{lem:WhatIsTauSigma}. The specific form of these identities, and the other combinatorial tools used in the proof of Theorem \ref{thm:MirrorTheorem}, are quite unlike anything appearing in the Gromov-Witten theory to our knowledge.)
\end{rem}

\subsection{Acknowledgements} This work is based on my Ph.D. thesis. I
would like to thank my Ph.D. advisor, Yongbin Ruan, for introducing me
to the area, and for many useful conversations. I am grateful to David
Speyer, Chiu-Chu Melissa Liu, Hsian-Hua Tseng, and Karen Smith for
reading versions of this paper, and helping me to improve it. I am particularly grateful to Hsian-Hua Tseng for pointing out gaps in the proof given in the original preprint.

This research was supported in part by NSF grants EMSW21-RTG 1045119
and EMSW21-RTG 0943832, and by the NSF GRFP.

\section{Notation, conventions, and background}\label{sec:Background}
This section sets up combinatorial conventions, and reviews
Atiyah-Bott torus localization, orbifold Gromov-Witten theory, and
moduli spaces of curves called \emph{Losev-Manin spaces}, which are
used in Section \ref{sec:FixedLocusLosevManin} to describe the torus
invariant curves in $\Sym^d\P^r$.

We always work over $\C$. We write $H^*(X):=H^*(X,\Q).$ For a point
$x$ of an orbifold $X$, we write $G_x$ for the isotropy group of $x.$

\subsection{Multipartitions and graphs}\label{sec:Combinatorics}
It is convenient to use the language of multisets, denoted with parentheses,
e.g. $(a,a,b)$. 
We write $\Mult(\Pi,a)$ for the number of times that $a$ appears in
$\Pi.$ We will refer to multiset unions and intersections, and sums
indexed by multisets, without comment. 

For an integer $d\ge0,$ $\Part(d)$ is the set of partitions of $d$,
i.e. the set of multisets of positive integers that sum to
$d$. 
A \emph{weak composition of $d$} is an ordered tuple of nonnegative
integers whose sum is $d$. The (finite) set of weak compositions of
$d$ of length $r$ is denoted $\ZPart(d,r).$ If $D\in\ZPart(d,r)$, a
\emph{multipartition of $D$} is a multiset $(\Pi_d)_{d\in D}$, with
$\Pi_d$ a partition of $d$. The (finite) set of multipartitions of $D$
is denoted $\MultiPart(D)$. For each partition $D\in\ZPart(d,r)$, there is a ``trivial multipartition'' of $D$, which we usually denote (abusing notation) by $(1,\ldots,1)$, where every part of every $\Pi_d$ is equal to 1. 
There is an ``underlying partition'' map
$\MultiPart(D)\to\Part(\sum_{d\in
  D}d)$. 

If $\Pi$ is a partition, we write $S_\Pi$ for the group of
automorphisms of $\Pi$ as a multiset (defined up to isomorphism);
e.g. for $\Pi=(1,1,1,2,2)$ of $7,$ we have $S_\Pi=S_3\times S_2.$ For
$\sigma=(\Pi_d)_{d\in D}$ a multipartition of $D\in\ZPart(d,r)$, we
define $S_\sigma:=\prod_{d\in D}S_{\Pi_d}.$


\medskip

Let $\Gamma=(V(\Gamma),E(\Gamma))$ be a finite graph. We denote by
$E(\Gamma,v)$ the set of edges incident to $v$. The \emph{valence}
$\val(v)$ of $v\in V(\Gamma)$ is $\abs{E(\Gamma,v)}$. (This is
different from some Gromov-Witten theory literature, where $\val(v)$
includes contributions from certain decorations on $\Gamma$.) A
\emph{flag} of $\Gamma$ is a pair $(v,e)\in V(\Gamma)\times E(\Gamma)$
with $e\in E(\Gamma,v)$. The set of flags of $\Gamma$ is denoted
$F(\Gamma)$.

\subsection{Equivariant cohomology} We will consider actions of the
torus $T:=(\C^*)^{r+1}$ on various spaces, e.g. $\P^r$, $\Sym^d\P^r,$
and $\Mbar_{g,n}(\Sym^d\P^r,\beta)$. If $T$ acts on a Deligne-Mumford
stack $X$, the equivariant cohomology $H^*_T(X)$ is a module over
$H^*_T(\Spec\C)\cong\Q[\alpha_0,\ldots,\alpha_r],$ where $-\alpha_i$
is the weight of the character $T\to\C^*$ defined by
$(\lambda_0,\ldots,\lambda_r)\mapsto\lambda_i$. We write
$H^*_{T,\loc}(\Spec\C)$ for the localization
$\Q(\alpha_0,\ldots,\alpha_r)$, and more generally
$H^*_{T,\loc}(X):=H^*_T(X)\otimes_{H^*_{T}(\Spec\C)}H^*_{T,\loc}(\Spec\C)$. We
will use the Atiyah-Bott \emph{localization theorem}, as well as
Graber-Pandharipande's generalization, the \emph{virtual localization
  theorem}.
\begin{thm}[\cite{AtiyahBott1984}, see \cite{EdidinGraham1998} for
  statement in the Chow ring]\label{thm:Localization}
  Let $T$ be a torus acting on a smooth compact manifold $X$, with
  fixed point set $F$. Then the map
  $(\iota_F)_*:H^*_{T,\loc}(F)\to H^*_{T,\loc}(X)$ is an isomorphism,
  where $(\iota_F)_*$ is the Gysin map associated to the inclusion
  $F\into X$. The inverse map is $\iota_F^*/e_T(N_{F|X}),$ where
  $e_T(N_{F})$ is the equivariant Euler class of the normal bundle to
  $F$. In particular, for $\alpha\in H^*_{T,\loc}(X,\Spec\C),$ we have
  \begin{align*}
    \int_X\alpha=\int_X(\iota_F)_*\left(\frac{\iota_F^*\alpha}{e_T(N_{F})}\right)=\int_F\frac{\iota_F^*\alpha}{e_T(N_{F})}.
  \end{align*}
\end{thm}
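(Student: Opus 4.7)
The plan is to follow the classical Borel-equivariant argument, treating the assertion as a statement about localizing the $\QQ[\alpha_0,\ldots,\alpha_r]$-module $H^*_T(X)$ at the multiplicative set generated by the nonzero characters of $T$. First I would recall that $H^*_T(X)=H^*(X\times^T ET)$ acquires its module structure via pullback from $H^*(BT)=H^*_T(\Spec\CC)$, so once the underlying cohomology groups are in hand the statement becomes purely algebraic.

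The central step is a vanishing lemma: for any closed $T$-invariant subspace $Y\subseteq X$ disjoint from the fixed locus $F$, one has $H^*_{T,\loc}(Y)=0$. I would prove this by Noetherian induction on $T$-invariant closed subspaces, using the long exact sequence attached to a $T$-invariant open-closed decomposition to reduce to the case of a single orbit $Y\cong T/T'$ with $T'\subsetneq T$ a proper subtorus. There $H^*_T(Y)\cong H^*_{T'}(\Spec\CC)$ is a quotient of $\QQ[\alpha_0,\ldots,\alpha_r]$ by the ideal generated by a nonzero character of $T$ that vanishes on $T'$, and that character becomes a unit after localization, so the module dies. Applying the vanishing to $Y=X\setminus F$ and invoking excision for the pair $(X,F)$ shows that $\iota_F^*$ is an isomorphism after localization.

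For the explicit inverse I would invoke the self-intersection formula $\iota_F^*(\iota_F)_*=e_T(N_{F|X})\cup(\cdot)$. Since no weight of the $T$-representation on any fiber of $N_{F|X}$ vanishes—otherwise some tangent direction transverse to $F$ would be fixed, contradicting the definition of $F$—the equivariant Euler class $e_T(N_{F|X})$ is a product of nonzero linear forms in the $\alpha_i$ plus higher-order corrections, hence invertible in $H^*_{T,\loc}(F)$. This gives $(\iota_F)_*^{-1}=\iota_F^*/e_T(N_{F|X})$, and pushing forward to a point yields the displayed integral.

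The main obstacle, and the reason a Chow-theoretic version is invoked for the applications to $\Sym^d\P^r$ and $\Mbar_{0,n}(\Sym^d\P^r,\beta)$ later in the paper, is passing from smooth compact manifolds to Deligne-Mumford stacks. For this I would follow Edidin-Graham and replace $ET$ by finite-dimensional Borel approximations $(V\setminus Z)/T$ for $T$-representations $V$ with $Z$ of sufficiently large codimension, checking that the torsion lemma above survives because any stabilizer of a point of $X\setminus F$ is an extension of a finite group by a positive-dimensional subtorus, so its equivariant cohomology remains torsion over $\QQ[\alpha_0,\ldots,\alpha_r]$ after localization. Graber-Pandharipande's virtual localization then installs the virtual fundamental class into the same framework when $X$ carries a perfect obstruction theory, which is the form in which Theorem~\ref{thm:Localization} will actually be applied.
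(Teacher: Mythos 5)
This theorem is quoted in the paper as background from \cite{AtiyahBott1984} (with \cite{EdidinGraham1998} for the Chow-theoretic version), so there is no in-paper proof to compare against; your sketch reproduces the standard argument of those references. The outline is correct: localization kills the equivariant cohomology of the fixed-point-free locus (since the stabilizer of any non-fixed point is a proper closed subgroup, so some nonzero character annihilates the orbit's equivariant cohomology and becomes a unit after localizing), excision then makes $\iota_F^*$ and $(\iota_F)_*$ isomorphisms, and the self-intersection formula together with invertibility of $e_T(N_{F|X})$ (nonzero normal weights plus nilpotence of positive-degree classes on $F$) identifies the inverse and yields the integration formula.
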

\begin{thm}[\cite{GraberPandharipande1999}]\label{thm:VirtualLocalization}
  Let $X$ be a Deligne-Mumford stack with a $T$-action and a
  $T$-equivariant perfect obstruction theory $E^{\bullet}$. Again, let
  $\iota_F:F\into X$ denote the inclusion of the fixed locus. Let
  $[X]^{\vir}$ denote the virtual fundamental class associated to
  $E^\bullet$. The $T$-fixed part of $E^{\bullet}$ defines a perfect
  obstruction theory on $F$, with virtual fundamental class
  $[F]^{\vir}$. The \emph{virtual normal bundle} $N_{F}^{\vir}$ to $F$
  is the $T$-moving part of $E^{\bullet}$. Then
  \begin{align}\label{eq:VirtualIntegralFormula}
    \int_{[X]^{\vir}}\alpha=\int_{[F]^{\vir}}\frac{\iota_F^*\alpha}{e_T(N_{F}^{\vir})}.
  \end{align}
\end{thm}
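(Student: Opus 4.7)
The plan is to follow the strategy of Graber-Pandharipande: reduce virtual localization to the classical Atiyah-Bott theorem (Theorem \ref{thm:Localization}) by globally resolving the $T$-equivariant perfect obstruction theory by vector bundles and then splitting everything into $T$-fixed and $T$-moving weight pieces.

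First I would verify that the weight-zero summand $E^{\bullet,\fix}$ of $E^\bullet|_F$ genuinely defines a perfect obstruction theory on $F$. The composition $E^\bullet|_F \to L_X^\bullet|_F \to L_F^\bullet$ is $T$-equivariant, and since $L_F^\bullet$ carries trivial weight it factors through $E^{\bullet,\fix} \to L_F^\bullet$. The two-term and $h^0$-surjectivity axioms for this arrow follow from the corresponding axioms for $E^\bullet$, using that $F$ is cut out exactly by the vanishing locus of the moving weights. This gives the class $[F]^\vir$. The virtual normal bundle $N_F^\vir$ is then the moving complement $E^{\bullet,\mov}$, viewed in equivariant $K$-theory; its equivariant Euler class is invertible in $H^*_{T,\loc}(F)$ by construction, since every weight of $E^{\bullet,\mov}|_F$ is nonzero.

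The crux is the class identity $(\iota_F)_*\bigl([F]^\vir / e_T(N_F^\vir)\bigr) = [X]^\vir$ in $H^*_{T,\loc}(X)$, from which the stated integral formula is immediate. To prove it I would locally resolve $E^\bullet$ by a two-term $T$-equivariant complex $[E_{-1} \to E_0]$ of locally free sheaves, realize $[X]^\vir$ as a refined intersection with the zero section of a bundle inside a smooth $T$-ambient space, and apply Theorem \ref{thm:Localization} in that smooth setting. The fixed/moving decomposition of $E_{-1}$ and $E_0$ then matches the corresponding pieces under the Atiyah-Bott isomorphism, yielding the desired identity after pushforward.

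The main obstacle will be globalization: the $T$-fixed/moving splitting is canonical on cohomology sheaves but not on complexes, so one needs a global $T$-equivariant two-term resolution by vector bundles to extend the local argument. Graber-Pandharipande handle this by hypothesizing such a global resolution, which is known to be available for the Gromov-Witten moduli stacks that will be used in the sequel. A secondary technical point is verifying that the $T$-fixed locus $F\subset X$ is a closed Deligne-Mumford substack so that $[F]^\vir$ makes sense in the usual formalism; for the stacks at hand this follows from standard rigidification results over $\C$.
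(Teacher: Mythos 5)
The paper does not prove this statement at all: Theorem \ref{thm:VirtualLocalization} is quoted as background directly from \cite{GraberPandharipande1999}, and the only commentary the paper adds is the remark immediately following it, namely that the original hypothesis of a $T$-equivariant embedding into a smooth Deligne--Mumford stack was later removed in \cite{ChangKiemLi2015}. So there is no internal argument to compare yours against; the intended ``proof'' in the context of this paper is the citation itself.

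Judged on its own terms, your outline is a fair roadmap of the Graber--Pandharipande strategy, but it is a sketch of a substantial external theorem rather than a proof, and the two places where the real work happens are exactly the places you compress. First, the assertion that $E^{\bullet,\fix}$ gives a perfect obstruction theory on $F$ (in particular the surjectivity on $h^0$ of $E^{\bullet,\fix}|_F\to L_F^{\bullet}$) is itself one of the nontrivial results of \cite{GraberPandharipande1999}; it does not simply ``follow from the corresponding axioms for $E^\bullet$,'' and your phrase that $F$ is ``cut out by the vanishing of the moving weights'' is a heuristic, not an argument. Second, the central identity $(\iota_F)_*\bigl([F]^{\vir}/e_T(N_F^{\vir})\bigr)=[X]^{\vir}$ is not obtained by merely ``matching'' the fixed and moving summands of a global resolution $[E_{-1}\to E_0]$ under the Atiyah--Bott isomorphism: one must compare the intrinsic normal cone of $X$, embedded in the bundle $E_1$ coming from the global resolution, with the cone of $F$ inside the fixed part of $E_1|_F$, and check compatibility of the relevant (localized) Gysin maps --- this cone comparison is the heart of the proof and is absent from your sketch. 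Finally, note that your reliance on a global equivariant resolution/embedding reproduces the original hypothesis of Graber--Pandharipande, whereas the paper explicitly invokes \cite{ChangKiemLi2015} to dispense with it; if you want the statement in the generality the paper uses, you should either cite that refinement or explain why the moduli stacks at hand admit the required equivariant embedding.
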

\begin{rem}
  The proof in \cite{GraberPandharipande1999} requires that $X$ have a
  global equivariant embedding into a smooth Deligne-Mumford stack,
  but this condition was removed in \cite{ChangKiemLi2015}.
\end{rem}

\subsection{Symmetric product stacks}\label{sec:SymmetricPowers} Let $X$ be a scheme over
$\C$. There are two common (equivalent) definitions of $\Sym^dX.$ The
first is the stack quotient $[X^d/S_d]$, where $S_d$ acts in the usual
way on $X^d$. That is, objects and morphisms are described by
\begin{center}
  Objects:
  \begin{tikzcd}[row sep=scriptsize, column sep=scriptsize]
    \tilde{S}\arrow[r,"\tilde{f}"]\arrow[d,"\pr"]& X^d\\
    S
  \end{tikzcd}
  \quad\quad\quad\quad
  Arrows:
  \begin{tikzcd}[row sep=scriptsize, column sep=scriptsize]
    \tilde{S}\arrow[r]\arrow[d]\arrow[bend left]{rr}{\tilde{f}}&\tilde{T}\arrow[swap]{r}{\tilde{g}}\arrow{d}{}& X^d\\
    S\arrow{r}{}&T
  \end{tikzcd}
\end{center}
where vertical maps are $S_d$-principal bundles, $\tilde{f}$ and
$\tilde{g}$ are $S_d$-equivariant, and the square on the right is
Cartesian. The second definition is given by
\begin{center}
  Objects:
  \begin{tikzcd}[row sep=scriptsize, column sep=scriptsize]
    S'\arrow[r,"f'"]\arrow[d,"\rho"]& X\\
    S
  \end{tikzcd}
  \quad\quad\quad\quad
  Arrows:
  \begin{tikzcd}[row sep=scriptsize, column sep=scriptsize]
    S'\arrow{r}{}\arrow{d}{}\arrow[bend left]{rr}{f'}&T'\arrow[swap]{r}{g'}\arrow{d}{}& X\\
    S\arrow{r}{}&T
  \end{tikzcd}
\end{center}
where vertical maps are degree $d$ \'etale, and the square on the
right is Cartesian. It is a straightforward exercise to show that the
two stacks defined are naturally isomorphic. We will usually use the
second, and we will consistently use the notations $S'\to S$ and
$f':S'\to X$ when referring to $S$-points of $\Sym^dX.$ The two
descriptions are related by the diagram:
\begin{align}\label{eq:SymDiagram}
  \begin{tikzcd}[row sep=scriptsize, column sep=scriptsize,ampersand replacement=\&]
    \tilde{S}\times\{1,\ldots,d\} \arrow[dr]
    \arrow[rr] \arrow[dd] \& \& X^d\times\{1,\ldots,d\}
    \arrow[dr,"\tilde{\rho}"]
    \arrow[dd,"\pr'" near
    start]\arrow[rrdd, bend left] \&\&\\
    \&\tilde{S} \arrow[rr, crossing over, "\tilde{f}" near start]  \& \& X^d \\
    S'=\tilde{S}\times_{S_d}\{1,\ldots,d\} \arrow[dr, "\rho"]
    \arrow[rr] \& \&
    X^d\times_{S_d}\{1,\ldots,d\} \arrow[dr, "\rho"] \arrow[rr,"P"
    near start] \&\& X\\
    \&S \arrow[rr, "f"]\arrow[from=uu, crossing over, "\pr" near
    start] \& \& \Sym^dX
    \arrow[from=uu,crossing over,"\pr" near start]\\
  \end{tikzcd}
\end{align}
Here the cube is Cartesian, and the left and right faces consist of
\'etale maps. The composition
$S'\to X^d\times_{S_d}\{1,\ldots,d\}\xrightarrow{P}{}X$ is $f'$.

Now assume $X$ is smooth. We can understand the tangent bundle to
$\Sym^d\P^r$ as follows:
\begin{lem}
  There is a natural isomorphism $T\Sym^dX\cong\rho_*(P^*TX),$ where
  $\rho$ and $P$ are as in the diagram above.
\end{lem}
\begin{proof}
  Since the square is cartesian and consists of \'etale maps, we have
  \begin{align*}
    \pr^*(\rho_*(P^*TX))\cong\tilde\rho_*((\pr')^*(P^*TX))=\tilde\rho_*((\pr'\circ
    P^*TX)).
  \end{align*}
  Recall that $\pr'\circ P$ is simply the ``universal coordinate
  map,'' so since $\tilde\rho$ is a trivial \'etale cover, there is a
  canonical isomorphism
  $$\tilde\rho_*((\pr'\circ
  P^*TX))\cong\bigoplus_{\ell=1}^dP_\ell^*TX\cong T(X^d).$$ Since
  $\tilde\rho$ is $S_d$-equivariant, there is an induced $S_d$-action
  on $T(X^d)$ which agrees with the usual one. Thus the isomorphism
  descends to give $\rho_*(P^*TX)\cong T\Sym^dX.$
\end{proof}

Finally, we describe the cyclotomic inertia stack
$I\Sym^dX\to\Sym^dX$, see Section 3 of
\cite{AbramovichGraberVistoli2008}. Assume $X$ is connected. For each
partition $\sigma\in\Part(d)$, there is a component $(\Sym^dX)_\sigma$
of $I\Sym^dX$, isomorphic to (a trivial gerbe over)
$\prod_{\eta\ge1}\Sym^{\Mult(\sigma,\eta)}X$, and the map
$(\Sym^dX)_\sigma\to\Sym^dX$ is (a rigidification followed by) the
obvious one. The generic point of $(\Sym^dX)_\sigma$ maps to a point
in $\Sym^dX$ with isotropy group isomorphic to
$\prod_{\eta\ge1}S_\eta$.

\begin{rem}
  The map $(\Sym^dX)_\sigma\to\Sym^dX$ (after rigidification) may not
  be an embedding. For example, consider $\Sym^4X,$ and let
  $\sigma=(2,1,1).$ By the above, $(\Sym^4X)_\sigma$ is a trivial
  gerbe over $X\times\Sym^2X.$ The induced map
  $X\times\Sym^2X\to\Sym^4X$ sends points $(a,(b,c))\mapsto(a,a,b,c),$
  but this identifies the two distinct points $(a,(b,b))$ and
  $(b,(a,a))$ for all $a,b\in X.$
\end{rem}



The (equivariant, nonorbifold) cohomology with rational coefficients
may be computed explicitly by the K\"unneth decomposition, as the
$S_d$-invariant part of $H^*_T(X^d,\Q)=\bigotimes_{j=1}^dH^*_T(X,\Q).$
In particular, for $X=\P^r,$ we will use the identification
$H^2_T(\Sym^d\P^r,\Q)\cong H^2_T((\P^r)^d,\Q)^{S_d}\cong
H^2_T(\P^r,\Q)$. We will abuse notation and write
$[H_i]\in H^2_T(\Sym^d\P^r,\Q)$ for the element that pulls back to
$\sum_{j=1}^d\pr_j^*[H_i]\in H^2_T((\P^r)^d)$, where $\pr_j$ is the
$j$th coordinate map and $[H_i]$ is the equivariant fundamental class
of the $i$th coordinate hyperplane.

Fix a component $(I\Sym^d\P^r)_\sigma$ of $I\Sym^d\P^r$. For
$\eta\in\sigma,$ we denote by $[H_{\sigma,\eta,i}]$ the pullback of
$[H_i]$ from the factor of
$(I\Sym^d\P^r)_\sigma\cong\prod_{\eta\ge1}\Sym^{\Mult(\sigma,\eta)}\P^r$
corresponding to $\eta$. We write $[H_{\sigma,i}]$ for
$\sum_{\eta}[H_{\sigma,\eta,i}].$

\subsection{(Orbifold) Gromov-Witten theory}\label{sec:OrbifoldGWTheory}
Our objects of study are the moduli spaces $\Mbar_{g,n}(X,\beta)$ of
$n$-marked genus-$g$ stable maps to a smooth proper Deligne-Mumford
stack $X$ of degree $\beta,$ introduced in \cite{ChenRuan2002} and
\cite{AbramovichVistoli2002}. See \cite{Liu2013}, Section 7 for an
introduction to the subject (in all genera). Following \cite{Liu2013},
we use the technical convention that all gerbes come with the data of
a section.

In this paper we will have either $X=\Sym^d\P^r$ or $X=BG$ for some
finite group $G$. We write $(f:C\to X)$ for a $\C$-point of
$\Mbar_{g,n}(X,\beta)$, and
\begin{center}
  \begin{tikzcd}
    \mathcal{C}\arrow{r}{f}\arrow{d}{\pi}&X\\
    \Mbar_{g,n}(X,\beta)
  \end{tikzcd}
\end{center}
for the universal curve and universal map.

A \emph{Gromov-Witten invariant} is an integral of the form
\begin{align}\label{eq:GWInvariant}
  \langle\bar{\psi}_1^{a_1}\gamma_1,\ldots,\bar{\psi}_n^{a_n}\gamma_n\rangle^X_{g,n,\beta}:=\int_{[\Mbar_{g,n}(X,\beta)]^{\vir}}\prod_{j=1}^n\bar{\psi}_j^{a_j}\ev_j^*\gamma_j\in\Q,
\end{align}
where
\begin{itemize}
\item $[\Mbar_{g,n}(X,\beta)]^{\vir}$ is the virtual fundamental
  class,
\item$\bar{\psi}_j$ is the $j$th cotangent class on
  $\Mbar_{g,n}(X,\beta)$, coming from the cotangent space to the
  \emph{coarse moduli space of $C$,}\footnote{Note that locally
    $\bar{\psi}_j=r_j\psi_j,$ where $r_j$ is the size of the isotropy
    group at the mark $b_j$, and $\psi_j$ is the ``stacky'' cotangent
    class.}
\item the ``insertions'' $\gamma_j$ are in the Chen-Ruan cohomology
  (see \cite{ChenRuan2004}) $H^*_{CR}(X),$ and
\item $\ev_j:\Mbar_{g,n}(X,\beta)\to IX$ is the $j$th evaluation map.
\end{itemize}
If $X$ has an action of a torus $T$, it induces a natural $T$-action
on $IX$ and $\Mbar_{g,n}(X,\beta)$, and
$[\Mbar_{g,n}(X,\beta)]^{\vir},$ $\bar{\psi}_j$, and $\ev_j^*\gamma_j$ are
naturally equivariant classes (where $\gamma_j\in
H^*_{CR,T}(X)$). In this case \eqref{eq:GWInvariant} defines an
\emph{equivariant Gromov-Witten invariant} (an element of
$H^*_T(\Spec\C)$, denoted by $\langle\cdots\rangle_{g,n,\beta}^{X,T}$)
via $T$-equivariant integration.

We introduce some formalism for the case $g=0,$ which will be used to
state and prove Theorems \ref{thm:ConeCharacterization} and
\ref{thm:MirrorTheorem}. Following \cite{CoatesCortiIritaniTseng2015},
the \emph{$T$-equivariant Novikov ring of $\Sym^d\P^r$} is
\begin{align*}
  \Lambda_T^{\nov}:=H^*_{T,\loc}(\Spec\C)[[Q]],
\end{align*}
and \emph{Givental's symplectic vector space} is
\begin{align*}
  \H:=H^*_{CR,T,\loc}(\Sym^d\P^r)[[Q]]((z^{-1}))=\H^+\oplus\H^-,
\end{align*}
where $\H^+=H^*_{CR,T,\loc}(\Sym^d\P^r)[[Q]][z]$ and
$\H^-=z^{-1}H^*_{CR,T,\loc}(\Sym^d\P^r)[[Q]][[z^{-1}]].$ Inside $\H,$
there is a special subscheme $\mathcal{L}_{\Sym^d\P^r}$ --- precisely, a formal germ of a subscheme over $\Spec\Lambda_{\nov}^T$, defined at
$-1\cdot z,$ where $1\in H^*_{CR,T,\loc}(\Sym^d\P^r)$ is the
fundamental class of the untwisted sector --- called the
\emph{Givental cone} of $\Sym^d\P^r,$ which encodes the genus-zero
Gromov-Witten invariants of $\Sym^d\P^r$.

Fix a basis $\gamma_\phi$
of $H^*_{CR,T,\loc}(\Sym^d\P^r)$, with Poincar\'e dual basis
$\gamma^\phi$. A $\Lambda_{\nov}^T[[x]]$-valued point
of $\mathcal{L}_{\Sym^d\P^r}$ is defined to be a power series
\begin{align*}
  -1z+\mathbf{t}(z)+\sum_{n=0}^\infty\sum_{\beta=0}^\infty\sum_\phi\frac{Q^\beta}{n!}\left\langle\mathbf{t}(\bar{\psi}),\ldots,\mathbf{t}(\bar{\psi}),\frac{\gamma_\phi}{-z-\bar{\psi}}\right\rangle_{0,n+1,\beta}^{{\Sym^d\P^r},T}\gamma^\phi\in\H[[x]],
\end{align*}
where $\mathbf{t}(z)\in\langle
Q,x\rangle\subseteq\H^+[[x]]$. 
\begin{rem}
This definition as stated is both confusing and slightly imprecise. The point is this: as a formal scheme over $\Spec\Lambda_{\nov}^T$, $\mathcal{L}_{\Sym^d\P^r}$ is characterized (indeed, defined) not just by its $\C$-valued points or $\Lambda_{\nov}^T$-valued points but by its points over arbitrary (topological) $\Lambda_{\nov}^T$-algebras. The definition given is the most basic nontrivial example, and generalizes in an obvious way. See Appendix B of \cite{CoatesCortiIritaniTseng2009} for a complete discussion.
\end{rem}
\begin{rem}\label{rem:TPowerSeries}
Another subtlety is that we may wish to take $\mathbf{t}(z)$ to be a power series in $z$, in which case it is not immediately obvious that the expression $\mathbf{t}(\bar{\psi})$ makes sense. In practice this is not a major concern; the key is that $\mathbf{t}(z)$ must be ``topologically nilpotent,'' which will always be the case in practice. Again, see Appendix B of \cite{CoatesCortiIritaniTseng2009}.
\end{rem}

 An important special case is $$\mathbf{t}(z)=\theta=\sum_{\phi}x_\phi\gamma_{\phi}\in H^*_{CR,T,\loc}(\Sym^d\P^r)[[\{x\}_{\phi}]],$$ where $\{\gamma_\phi\}$ is the basis for $H^*_{CR,T,\loc}(\Sym^d\P^r)$ chosen above.  The corresponding $\Lambda_{\nov}^T[[\{x\}_{\phi}]]$-valued point is called the $J$-function of $\Sym^d\P^r$ and is denoted $J_{\Sym^d\P^r}(Q,\theta,-z).$ Here $\mathbf{t}(z)$ has no nonzero powers of $z$, so the invariants appearing in $J_{\Sym^d\P^r}(Q,\theta,-z)$ have a single $\psi$-class.

$\mathcal{L}_{\Sym^d\P^r}$ has several
important geometric properties that follow from relations between
Gromov-Witten invariants: see Appendix B of
\cite{CoatesCortiIritaniTseng2009}, which also defines
$\mathcal{L}_{\Sym^d\P^r}$ rigorously as a non-Noetherian formal
scheme. For example, it is a cone in a certain sense, hence the name
(Proposition B.2 of \cite{CoatesCortiIritaniTseng2009}).

Given a vector bundle $E$ on $X$, there is also a notion of an
\emph{$E$-twisted Gromov-Witten invariant} of $X$.  We need this
notion only when $X=BG$, with the trivial action of a torus $T$. Let
$E$ be a $T\times G$ representation. Then
$R\pi_*f^*E\in K^0_T(\Mbar_{g,n}(BG,0)).$ An $E$-twisted Gromov-Witten
invariant of $BG$ is known as a \textit{Hurwitz-Hodge integral}, and
is defined by
\begin{align}\label{eq:TwistedInvariant}
  \langle\bar{\psi}_1^{a_1}\gamma_1,\ldots,\bar{\psi}_n^{a_n}\gamma_n\rangle_{g,n,0}^{BG,T,E}:=\int_{[\Mbar_{g,n}(BG,0)]^{\vir}}\prod_{j=1}^n\bar{\psi}_j^{a_j}\ev_j^*\gamma_j\cup
  e_T^{-1}(R\pi_*f^*E).
\end{align}

As above, in genus zero we can define the \textit{twisted Lagrangian
  cone} $\mathcal{L}_{BG}^E:$ a $\Lambda_{\nov}^T[[x]]$-valued point
of $\mathcal{L}_{BG}^{E}$ is defined to be
\begin{align}\label{eq:TwistedConeDef}
  -1z+\mathbf{t}(z)+\sum_{n=0}^\infty\sum_\phi\frac{1}{n!}\left\langle\mathbf{t}(\bar{\psi}),\ldots,\mathbf{t}(\bar{\psi}),\frac{\gamma_\phi}{-z-\bar{\psi}}\right\rangle_{0,n+1,0}^{BG,T,E}\gamma^\phi,
\end{align}
for some $\mathbf{t}(z)\in\langle Q,x\rangle\subseteq\H^+[[x]]$. Here $\gamma_\phi$ and
  $\gamma^\phi$ are dual bases of $H^*_T(X)$ under the \emph{twisted}
  Poincar\'e pairing, see \cite{CoatesCortiIritaniTseng2015}. 

\begin{notation}
  In the important case where $\mu\cong BG$ is a $T$-fixed point of an
  ambient orbifold $Y$, and $E=T_\mu Y,$ we write
  $\mathcal{L}_\mu^{\tw}:=\mathcal{L}_\mu^{T_\mu Y}.$
\end{notation}

\subsection{Losev-Manin spaces}\label{sec:LosevManin}
We recall certain moduli spaces of marked curves, studied originally
by Losev and Manin \cite{LosevManin2000}.
\begin{Def}
  Let $k\ge1$, and fix a 2-element set $\{0,\infty\}$. An
  \emph{$(0|k|\infty)$-marked Losev-Manin curve} is a connected
  genus zero $(k+2)$-marked nodal curve
  $(C,b_{0},b_1,\ldots,b_k,b_{\infty})$, satisfying:
  \begin{itemize}
  \item The irreducible components of $C$ form a chain, with two
    leaves $C_0$ and $C_\infty$,
  \item The points $b_{0},b_1,\ldots,b_k,b_{\infty}$ are smooth
    points of $C$, with $b_{0}\in C_0$ and $b_{\infty}\in C_\infty$,
  \item $b_i\ne b_0$ and $b_i\ne b_\infty$ for $i=1,\ldots,k$ (though it is
    possible that $b_i=b_j$ for $i\ne j$), and
  \item Each irreducible component of $C$ contains at least one point
    of $b_1,\ldots,b_k$.
  \end{itemize}
\end{Def}
\begin{thm}[\cite{LosevManin2000}, Theorems 2.2 and 2.6.3]\label{thm:LosevManin}
  The moduli space of $(0|k|\infty)$-marked Losev-Manin curves
  $\Mbar_{0|k|\infty}$ is a smooth projective (toric) variety, and there
  is a natural birational morphism
  $\varphi:\Mbar_{0,k+2}\to\Mbar_{0|k|\infty}$.
\end{thm}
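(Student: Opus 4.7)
The plan is to construct $\Mbar_{0|k|\infty}$ explicitly as a smooth projective toric variety of dimension $k-1$, so that smoothness and properness are immediate, and then to describe $\varphi$ moduli-theoretically as a contraction of unstable components. To find the candidate toric variety, first I would identify the dense open locus $U\subseteq\Mbar_{0|k|\infty}$ of smooth curves. After fixing $b_0=0$ and $b_\infty=\infty$, the residual automorphism group of $\P^1$ is $\C^*$ (rescaling), so the locus of smooth $(k+2)$-marked Losev-Manin curves (allowing the light marks to coincide) is $(\C^*)^k/\C^*\cong(\C^*)^{k-1}$. A boundary Losev-Manin curve consists of a chain of $m\ge2$ components $C_1,\ldots,C_m$ from $b_0$ to $b_\infty$, encoded combinatorially by an ordered set partition $S_1|S_2|\cdots|S_m$ of $\{1,\ldots,k\}$, with $S_j$ recording which of $b_1,\ldots,b_k$ lie on $C_j$. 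These ordered set partitions are in natural bijection with the cones of the permutohedral fan in $\R^{k-1}$, suggesting that $\Mbar_{0|k|\infty}$ is the smooth projective toric variety of that fan.

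To prove this I would verify the universal property chart-by-chart. On each chart indexed by a maximal cone (i.e., a total ordering of $\{1,\ldots,k\}$), the $k-1$ local coordinates serve as smoothing parameters for the $k-1$ nodes of the correspondingly maximally degenerate chain, and I would write down an explicit universal family on the chart whose fiber at each point realizes the prescribed combinatorial type (a node is unsmoothed exactly when its coordinate vanishes). After checking that these local families glue, one shows that any family of Losev-Manin curves over a base $B$ is classified by a unique morphism $B\to\Mbar_{0|k|\infty}$, read off from local node-smoothing coordinates on $B$. Smoothness and properness then follow immediately from smoothness and completeness of the permutohedral fan.

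The morphism $\varphi:\Mbar_{0,k+2}\to\Mbar_{0|k|\infty}$ I would define moduli-theoretically. Given a stable $(k+2)$-pointed rational curve $(C,b_0,b_1,\ldots,b_k,b_\infty)$, identify the unique chain of components in the dual tree of $C$ joining the component containing $b_0$ to that containing $b_\infty$. Contract every component off this chain, sending each $b_i$ (for $1\le i\le k$) to the node where its original component attached to the chain; then contract any component on the chain carrying no $b_i$. The result is a Losev-Manin curve, and functoriality in families upgrades this construction to a morphism of stacks. Birationality is clear: on the open dense locus where $C\cong\P^1$ and all $k+2$ marks are distinct, $\varphi$ restricts to the identity.

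The main obstacle is the representability step, i.e., showing that the permutohedral toric variety actually represents the Losev-Manin moduli functor. This requires the explicit chart-by-chart construction of the universal family together with a careful check that arbitrary families of Losev-Manin curves are classified by unique maps to this toric variety. This is the technical heart of the argument in \cite{LosevManin2000}, and amounts to careful computation with the node-smoothing coordinates of a chain.
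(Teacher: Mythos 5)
The paper does not prove this statement at all: it is quoted from \cite{LosevManin2000} (Theorems 2.2 and 2.6.3), and the remark immediately following it points out the modern shortcut, namely that $\Mbar_{0|k|\infty}$ is Hassett's weighted space $\Mbar_{0,\mathcal{A}}$ with weight datum $(1,\epsilon,\ldots,\epsilon,1)$, so smoothness, properness and the birational reduction morphism from $\Mbar_{0,k+2}=\Mbar_{0,(1,1,\ldots,1)}$ are special cases of Theorems 2.1 and 4.1 of \cite{Hassett2003}. Your plan instead reconstructs the original Losev--Manin argument: the combinatorial skeleton you describe is correct --- the open locus is $(\C^*)^k/\C^*\cong(\C^*)^{k-1}$, boundary chains are indexed by ordered set partitions of $\{1,\ldots,k\}$, these match the cones of the permutohedral (braid) fan in $\R^{k-1}$, and $\varphi$ is the contraction of components that are unstable for the lighter weights, which is the identity on the locus of irreducible curves with distinct marks, hence birational.

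The genuine shortfall is the one you name yourself: the representability of the Losev--Manin functor by the permutohedral toric variety is asserted as the plan's ``main obstacle'' but never carried out, and that step is the entire content of the theorem --- constructing the universal family chart by chart from node-smoothing coordinates, checking it glues, and proving that an arbitrary family over a base $B$ (including nonreduced $B$) is pulled back along a unique map $B\to\Mbar_{0|k|\infty}$. Likewise, ``functoriality in families'' for $\varphi$ is not automatic from the fiberwise contraction recipe: one must exhibit the contracted family as a flat family of Losev--Manin curves over $\Mbar_{0,k+2}$ (in Hassett's treatment this is done via relative $\mathrm{Proj}$ of the appropriate log-canonical algebra) before the universal property produces the morphism. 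So the route is sound and is in fact the historical one, but as written it is an outline whose technical heart is deferred; if you want a complete argument with minimal work, the Hassett specialization recorded in the paper's remark delivers all three claims at once.
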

\begin{rem}
  The spaces $\Mbar_{0|k|\infty}$ is an example of a moduli space
  $\Mbar_{0,\mathcal{A}}$ of weighted stable curves, developed later
  by Hassett \cite{Hassett2003}, and Theorem \ref{thm:LosevManin} is a
  special case of Theorems 2.1 and 4.1 of
  \cite{Hassett2003}. Specifically, there is a natural isomorphism
  $\Mbar_{0|k|\infty}\to\Mbar_{0,\mathcal{A}},$ where $\mathcal{A}$ is
  the weight datum $(1,\epsilon,\epsilon,\ldots,\epsilon,1)$ of length
  $k+2$, for $\epsilon\le1/k$.
\end{rem}
\begin{Def}
  Let $s\ge1$ be an integer. An \emph{order-$s$ orbifold
    $(0|k|\infty)$-marked Losev-Manin curve} is a $(k+2)$-marked twisted
  curve $(C,b_{0},b_1,\ldots,b_k,b_{\infty})$ (in the sense of
  \cite{Olsson2007}) whose coarse moduli space is a $k$-marked
  Losev-Manin curve, such that $C$ has orbifold structure only at
  $b_0$, $b_\infty$, and the nodes of $C$, all of which have order
  $s$.
\end{Def}
The moduli space $\Mbar^s_{0|k|\infty}$ of order-$s$ orbifold
$k$-marked Losev-Manin curves has a natural map
$\Mbar^s_{0|k|\infty}\to\Mbar_{0|k|\infty}$ that comes from taking
coarse moduli spaces of curves. Our calculations in Section
\ref{sec:CharacterizationOfLagrangianCone} will use the following
fact, a special case from Lemma 2.3 of \cite{Moon2011}.
\begin{lem}\label{lem:PsiPullback}
  Let $\psi_{0,LM}$ and $\psi_{\infty,LM}$ denote the tautological
  cotangent classes at $b_0$ and $b_\infty$ on $\Mbar_{0|k|\infty}.$
  The pullbacks $\varphi^*\psi_{0,LM}$ and $\varphi^*\psi_{\infty,LM}$
  along the reduction morphism $\Mbar_{0,k+2}\to\Mbar_{0|k|\infty}$
  are the cotangent classes $\psi_0$ and $\psi_\infty,$ respectively.
\end{lem}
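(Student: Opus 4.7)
The plan is to reduce the statement to Lemma 2.3 of \cite{Moon2011} after unpacking the definitions on both sides.

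First I would interpret both $\psi$-classes as first Chern classes of cotangent line bundles on the appropriate universal curves. Let $\pi: \mathcal{C} \to \Mbar_{0,k+2}$ and $\pi^{LM}: \mathcal{C}^{LM} \to \Mbar_{0|k|\infty}$ denote the universal curves, with distinguished sections $\sigma_0, \sigma_\infty$ and $\sigma_0^{LM}, \sigma_\infty^{LM}$. Setting $\LL_0 := \sigma_0^*\omega_\pi$, $\LL_\infty := \sigma_\infty^*\omega_\pi$, and analogously $\LL_0^{LM}, \LL_\infty^{LM}$ on $\Mbar_{0|k|\infty}$, one has $\psi_0 = c_1(\LL_0)$ and $\psi_{0,LM}=c_1(\LL_0^{LM})$ by definition, and similarly at $b_\infty$. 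The content of the lemma is thus the isomorphism of line bundles $\varphi^*\LL_0^{LM}\cong\LL_0$ and $\varphi^*\LL_\infty^{LM}\cong\LL_\infty$.

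Next I would describe the reduction morphism concretely in terms of the Hassett weight datum $(1,\epsilon,\ldots,\epsilon,1)$ with $\epsilon\le 1/k$: on the level of universal curves, $\varphi$ is induced by a $\Mbar_{0,k+2}$-morphism $\Phi:\mathcal{C}\to\varphi^*\mathcal{C}^{LM}$ that, fiberwise, contracts the components that fail to be stable under the new weight datum. Because $b_0$ and $b_\infty$ are heavy, the sections $\sigma_0$ and $\sigma_\infty$ are compatible with $\Phi$, that is, $\Phi\circ\sigma_0=\sigma_0^{LM}\circ\varphi$ (and similarly at $\infty$). Therefore the required isomorphism of line bundles will follow once one knows that the comparison of relative dualizing sheaves $\omega_{\mathcal{C}/\Mbar_{0,k+2}}$ and $\Phi^*\omega_{\varphi^*\mathcal{C}^{LM}/\Mbar_{0,k+2}}$ pulls back trivially along $\sigma_0$ and $\sigma_\infty$.

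This last input is exactly what Moon's Lemma 2.3 provides: the correction between the two relative dualizing sheaves is supported on the exceptional locus of $\Phi$, and intersects the sections at the heavy marked points in a way that produces no contribution on pullback. Applying it gives $\varphi^*\LL_0^{LM}\cong \LL_0$ and $\varphi^*\LL_\infty^{LM}\cong \LL_\infty$, and taking first Chern classes yields the stated equalities.

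The main obstacle, strictly speaking, is the verification of Moon's lemma itself in the degenerate case where the component of $C$ containing $b_0$ is one of those contracted by $\Phi$ (i.e., has no light marked point on it). There, $b_0$ is carried to the adjacent component and one needs an explicit local computation to check that the cotangent direction at $b_0$ on the source matches the cotangent direction at the image of the contracted component on the target, as line bundles over the moduli. Since this is precisely the content of the cited lemma, I would simply invoke it rather than reprove it.
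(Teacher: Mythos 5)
Your proposal matches the paper's approach: the paper offers no independent argument, stating only that the lemma is immediate from Lemma 2.3 of \cite{Moon2011}, which is exactly the input you reduce to after unpacking the contraction of universal curves. (The degenerate case you flag never actually occurs: any component of a stable $(k+2)$-pointed curve containing $b_0$ has at least two further special points, each a node or the weight-$1$ point $b_\infty$, so its Hassett weight exceeds $2$ and it is never contracted by $\varphi$.)
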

\begin{rem}
  Lemma \ref{lem:PsiPullback} holds for order-$s$ orbifold Losev-Manin
  spaces, either using the cotangent classes $\bar\psi$ (as we do in
  this paper), or replacing $\Mbar_{0,k+2}$ with a stacky replacement
  $\Mbar^s_{0,k+2}$. ($\Mbar^s_{0,k+2}$ parametrizes curves where
  $b_0$ and $b_\infty$ have order-$s$ orbifold structure, as do any
  nodes that separate $b_0$ from $b_\infty.$)
\end{rem}

\section{The action of \texorpdfstring{$(\C^*)^{r+1}$}{(C\^{}*)\^{}\{r+1\}} on \texorpdfstring{$\Sym^d\P^r$}{Sym\^{}d P\^{}r}}\label{sec:TAction}
There is a natural action of $T:=(\C^*)^{r+1}$ on $\P^r.$ This induces
a diagonal action of $(\C^*)^{r+1}$ on $(\P^r)^d,$ which commutes with
the action of $S_d,$ hence acts on $\Sym^d\P^r.$ (The action on a
diagram $S\xleftarrow{\rho}{}S'\xrightarrow{f'}{}\P^r$ as in Section
\ref{sec:SymmetricPowers} is by postcomposition of
$f'$.) 
This
$T$-action on $\Sym^d\P^r$ induces an action on
$\Mbar_{g,n}(\Sym^d\P^r,\beta)$ for all $n$ and $\beta$.

The goal of this section is Theorem \ref{thm:FixedLocusLosevManin},
which explicitly characterizes the $T$-fixed locus in
$\Mbar_{g,n}(\Sym^d\P^r,\beta)$. The building blocks of the
construction are spaces $\Mbar_{g,n}(BG,0)$ of admissible covers from
\cite{AbramovichCortiVistoli2003}\footnote{These stacks compactify
  Hurwitz spaces, and are now usually referred to as moduli spaces of
  admissible covers, though \cite{AbramovichCortiVistoli2003} reserves
  that term for the related compactifications defined earlier by
  Harris-Mumford \cite{HarrisMumford1982}.}, the Losev-Manin spaces
from Section \ref{sec:LosevManin}, and combinatorial objects called
\textit{decorated graphs}.

\subsection{\texorpdfstring{$T$}{T}-fixed points and 1-dimensional orbits of
  \texorpdfstring{$\Sym^d\P^r$}{Sym\^{}d P\^{}r}}\label{sec:OrbitsOfT}
We begin by fixing notation for points and lines in $\P^r.$ We will
denote the coordinate points of $\P^r$ by $P_0,P_1,\ldots,P_r,$ where
$P_i$ is the point where the only nonzero coordinate is the $i$th
one. We denote by $L_{(i_1,i_2)}=L_{(i_2,i_1)}$ the line through
$P_{i_1}$ and $P_{i_2}.$ We write $P_{(i_1,i_2)}$ for the ``midpoint''
of this line, where the $i_1$-th and $i_2$-th coordinates are equal.

Recall from Section \ref{sec:SymmetricPowers} that a map
$f:S\to\Sym^d\P^r$ is the same as a degree-$d$ \'etale cover
$\rho:S'\to S,$ and a map $f':S'\to\P^r.$ We use the notation $\bullet$ for
$\Spec\C$, and $d(\bullet)$ for the union of $d$ copies of $\Spec\C.$
Note $d(\bullet)$ is the only degree-$d$ \'etale cover of $\bullet$,
so ($\C$-valued) points of $\Sym^d\P^r$ are in natural bijective
correspondence with maps $f':d(\bullet)\to\P^r$.
\begin{prop}\label{prop:UnionOfOrbits}
  Points of $\Sym^d\P^r$ with 0- and 1-dimensional $T$-orbits are
  classified as follows:
  \begin{enumerate}
  \item A point $(d(\bullet)\xrightarrow{f'}{}\P^r)\in\Sym^d\P^r$ is
    $T$-fixed if and only if $\Im(f')\subseteq\{P_0,\ldots,P_r\}$.\label{item:FixedPoints}
  \item $(d(\bullet)\xrightarrow{f'}{}\P^r)$ has a 1-dimensional
    $T$-orbit if and only if it is not $T$-fixed and
    $\Im(f')\subseteq\{P_0,\ldots,P_r\}\cup L_{(i_1,i_2)}$ for some
    $0\le i_1,i_2\le r$.\label{item:1DimOrbits}
  \end{enumerate}
\end{prop}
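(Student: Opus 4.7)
The plan is to work in the description $\Sym^d\P^r\cong[(\P^r)^d/S_d]$, representing a stack point by an ordered tuple $(Q_1,\ldots,Q_d)$ considered up to the $S_d$-action, equivalently by the multiset $\Im(P)$ (with multiplicities). In both parts I translate a question about orbits on the stack into one about diagonal $T$-orbits on $(\P^r)^d$: since quotienting by the finite group $S_d$ preserves orbit dimensions, the answer is determined by the classical toric geometry of $(\P^r)^d$.

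For part \ref{item:FixedPoints}, stack-theoretic $T$-fixedness of the point amounts to saying that for every $t\in T$ there exists $\sigma_t\in S_d$ with $tQ_i=Q_{\sigma_t(i)}$. The pairs $(t,\sigma_t)$ form a closed subgroup $H\subseteq T\times S_d$ that surjects onto $T$ with finite kernel (a subgroup of $S_d$), so $\dim H=\dim T$. Since $S_d$ is discrete, the identity component $H^\circ$ is contained in $T\times\{1\}$, and by dimension count $H^\circ$ still surjects onto the connected group $T$. Hence $tQ_i=Q_i$ for all $t\in T$ and all $i$, so each $Q_i$ is a $T$-fixed point of $\P^r$, i.e.\ a coordinate point; the converse is immediate.

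For part \ref{item:1DimOrbits}, the orbit dimension equals $\dim T-\dim\bigcap_i\mathrm{Stab}_T(Q_i)$. For $Q=[x_0:\cdots:x_r]\in\P^r$, the stabilizer $\mathrm{Stab}_T(Q)$ is cut out by the equations $\lambda_i=\lambda_j$ whenever $x_i,x_j\ne 0$, which has codimension equal to one less than the number of nonzero coordinates of $Q$. Thus $\mathrm{Stab}_T(Q)=T$ if $Q$ is a coordinate point; it has codimension $1$ of the form $\{\lambda_{i_1}=\lambda_{i_2}\}$ if $Q\in L_{(i_1,i_2)}$ is not a coordinate point; and it has codimension $\ge 2$ otherwise. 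Since distinct hyperplanes of the form $\{\lambda_{i_1}=\lambda_{i_2}\}$ intersect in codimension $2$, the combined stabilizer $\bigcap_i\mathrm{Stab}_T(Q_i)$ has codimension exactly $1$ if and only if some $Q_i$ is not a coordinate point and every $Q_i$ lies in $\{P_0,\ldots,P_r\}\cup L_{(i_1,i_2)}$ for a single common line $L_{(i_1,i_2)}$, which is the claim.

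The most delicate step is the stack-theoretic fixed-point argument: one must verify that $T$-fixedness of a stack point really forces each $Q_i$ to be individually $T$-fixed, rather than merely that the multiset is $T$-invariant. The dimension count via $H^\circ\subseteq T\times\{1\}$ is what accomplishes this; the remainder of the proof is a direct computation with toric stabilizers.
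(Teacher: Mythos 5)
Your proof is correct and follows essentially the same route as the paper's: an orbit/stabilizer dimension count in toric terms, with the fixed locus of $\P^r$ and the codimension-one subtori $\{\lambda_{i_1}=\lambda_{i_2}\}$ doing the work. The only difference is that you spell out (via the subgroup $H\subseteq T\times S_d$ and connectedness of $T$) why stacky $T$-fixedness forces each point of $\Im(P)$ to be individually fixed, a step the paper dismisses as clear from the $\widetilde{\Sym}^d\P^r$ description; this added care is harmless and welcome.
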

\begin{proof}
  \eqref{item:FixedPoints} follows from the definition of the
  $T$-action by post-composition, and that fact that
  $\{P_0,\ldots,P_r\}$ is the $T$-fixed locus of $\P^r.$

  The $r$-dimensional subtorus defined by $t_{i_1}=t_{i_2}$ acts
  trivially on $\{P_0,\ldots,P_r\}\cup L_{(i_1,i_2)}$, proving the
  backwards direction of \eqref{item:1DimOrbits}. If
  $\Im(f')\not\subseteq\{P_0,\ldots,P_r\}\cup L_{(i_1,i_2)},$ then
  $\Im(f')$ contains either two points on different coordinate lines,
  or a point not on a coordinate line. In either case, is it is easy
  to check explicitly that the $T$-orbit is at least 2-dimensional.
\end{proof}
\begin{rem}\label{rem:TFixZPart}
  The $T$-fixed points of $\Sym^d\P^r$ are in natural bijection with
  the set $\ZPart(d,r+1)$ of length-$(r+1)$ weak compositions, where the
  $i$th part is the number of points of $d(\bullet)$ mapping to
  $P_i$. We will use this identification from now on.
\end{rem}

\subsection{\texorpdfstring{$T$}{T}-fixed stable maps to
  \texorpdfstring{$\Sym^d\P^r$}{Sym\^{}d P\^{}r} with irreducible source curve}\label{sec:TFixedStableMaps}
It is well-known (see \cite{Liu2013}) that if $X$ is a Deligne-Mumford
stack with an action of a torus $T$, then a stable map $f:C\to X$ is
$T$-fixed if and only if each component $C_\nu$ of $C$ maps into the
fixed locus $X^T$, or maps to the closure $\bar{U}$ of a 1-dimensional
$T$-orbit $U,$ with special points (nodes and marks) and ramification
points mapping to $\bar{U}\setminus U.$ (In the latter case it follows
that $C_\nu$ is rational; we may regard $f|_{C_\nu}$ as a point of
$\Mbar_{0,2}(X,\beta)$ for some $\beta$.) If $T$ acts with isolated
fixed points, we refer to the two types of components of $C$ as
\emph{contracted} and \emph{noncontracted}, since those of the first
type map to a single point of $X$. On contracted components $C_\nu$,
$f$ factors through $BG$ for some $G$; thus $f|_{C_\nu}$ is an
admissible $G$-cover in the sense of
\cite{AbramovichCortiVistoli2003}. The following lemma classifies
noncontracted components of $T$-fixed stable maps to $\Sym^d\P^r$.
\begin{lem}\label{lem:IrreducibleFixedMap}
  Let $(f:C\to\Sym^d\P^r)\in\Mbar_{0,2}(\Sym^d\P^r,\beta)$ be a stable
  map of degree $\beta>0$ with irreducible source curve. Denote by
  $b_1$ and $b_2$ the two marked points of $C$. Denote by
  $\rho:C'\to C$ and $f':C'\to\P^r$ the associated degree $d$ \'etale
  cover and map to projective space, respectively. (See Section
  \ref{sec:SymmetricPowers}.) Then $(f:C\to\Sym^d\P^r)$ is $T$-fixed if
  and only if all of the following hold:
  \begin{itemize}
  \item $C'$ is a disjoint union of rational connected components
    $C'_{\eta}$. (Since $C$ has two orbifold points, this means that
    on coarse moduli spaces, $\rho$ is a cover, fully ramified over
    $b_1$ and $b_2$.)
  \item There exist distinct indices $0\le i_1,i_2\le r$ such that $f'$
    maps each component $C'_\eta$ either
    \begin{enumerate}[label=(\roman*)]
    \item to the line $L_{(i_1,i_2)}$, or
    \item to a $T$-fixed point of $\P^r$.
    \end{enumerate}
  \item On the level of coarse moduli spaces, the restriction
    $f'|_{C'_\eta}$ to any component of type (i) is a cover of
    $L_{(i_1,i_2)},$ fully ramified at the two points $\rho^{-1}(b_1)$
    and $\rho^{-1}(b_2)$.
  \item For each component $C'_\eta$, write $c_\eta$ for the degree of
    $\rho|_{C_\eta'}:C_\eta'\to C$. For components $C'_\eta$ of type
    $(i)$, write $\beta_\eta$ for the degree of
    $f'|_{C_\eta'}:C_\eta'\to L_{(i_1,i_2)}$, and
    $q_\eta:=\beta_\eta/c_\eta$. Then $q:=q_\eta$ is independent of
    the type (i) component $C'_\eta$.
  \end{itemize}
\end{lem}
\begin{proof}
  The first three statements follow from the fact that $C$ is genus
  zero with exactly two orbifold points, and from Proposition
  \ref{prop:UnionOfOrbits}. It is a straightforward computation in
  coordinates to check that the last statement is equivalent to the
  fact that the $T$-action is compatible with the map $\rho$,
  i.e. that the action of $\lambda\in T$ is equivalent to a coordinate
  change on $C$.
\end{proof}
\begin{rem}\label{rem:TrivialMonodromy}
  The same statement and proof apply to
  $\Mbar_{0,1}(\Sym^d\P^r,\beta)$ and $\Mbar_{0,0}(\Sym^d\P^r,\beta)$
  and in these cases we have a slightly stronger statement: since $C$
  has at most one orbifold point, it has no nontrivial \'etale
  cover. Thus $C'\cong C\times\{1,\ldots,d\}$ and $c_\eta=1$ for all
  $\eta$.
\end{rem}
From an irreducible $T$-fixed stable map as in Lemma
\ref{lem:IrreducibleFixedMap}, we may extract discrete data (see
\ref{sec:Combinatorics} for notation) as follows:
\begin{itemize}
\item The rational number $q$ associated to type $(i)$ components of $C'$.
\item The two compositions $f(b_1),f(b_2)\in\ZPart(d,r+1).$ (See Remark
  \ref{rem:TFixZPart}.)
\item A refinement of the above: for each $i\in\{0,\ldots,r\},$ the
  points of $C'$ mapping to $P_i$ are each counted with a multiplicity
  $c_\eta.$ Whereas $f(b_1)$ remembers only the sum for each $i$, we
  could instead record the list of multiplicities $c_\eta.$ The result
  is a multipartition $\Mon(b_1)\in\MultiPart(f(b_1)).$ This
  multipartition describes the monodromy of $f$ at $b_1$ as a
  conjugacy class in $G_{f(b_1)}$. Similarly
  $\Mon(b_2)\in\MultiPart(f(b_2)).$
\end{itemize}
\subsection{Decorated graphs}
Having classified irreducible components of $T$-fixed stable maps to
$\Sym^d\P^r,$ we will now describe how these components fit together. 
Following \cite{Liu2013}, we introduce combinatorial objects called
\emph{decorated graphs}, which capture the combinatorial data of
elements of $(\Mbar_{g,n}(\Sym^d\P^r,\beta))^T.$
\begin{Def}\label{Def:DecoratedGraph}
  An \emph{$n$-marked genus-$g$ $\Sym^d\P^r$-decorated graph}
  $(\Gamma,\Mark,\{g_v\},\VEval,q,\vec\Mon)$ is
  \begin{itemize}
  \item A graph $\Gamma$,
  \item A marking map $\Mark:\{1,\ldots,n\}\to V(\Gamma)$,
  \item A ``vertex genus'' map $V(\Gamma)\to\Z_{\ge0}$ denoted
    $v\mapsto g_v$,
  \item A ``vertex evaluation'' map
    $\VEval=(\VEval_0,\ldots,\VEval_r):V(\Gamma)\to\ZPart(d,r+1),$
  \item An ``edge degree ratio'' map $q:E(\Gamma)\to\Q_{>0}$,
  \item A ``monodromy map'' $\Mon=(\Mon_0,\ldots,\Mon_r)$ that
    assigns to each $j\in\{1,\ldots,n\}$ an element of
    $\MultiPart(\VEval(\Mark(j)))$ (see Section
    \ref{sec:Combinatorics}), and assigns to each flag
    $(v,e)\in F(\Gamma)$ an element of $\MultiPart(\VEval(v)),$
  \end{itemize}
  subject to the conditions:
  \begin{enumerate}
  \item $h_1(\Gamma)+\sum_{v\in V(\Gamma)}g_v=g.$
  \item Let $e$ be an edge of $\Gamma$ connecting vertices $v$ and
    $v'$. Then there exist two distinct indices
    $0\le i^{\mov}(v,e),i^{\mov}(v',e)\le r$ such
    that:\label{item:PartitionsAgree}
    \begin{itemize}
    \item $\VEval_{i^{\mov}(v,e)}(v)-\VEval_{i^{\mov}(v,e)}(v')>0.$
    \item If $i\not\in\{i^{\mov}(v,e),i^{\mov}(v',e)\},$ then
      $\VEval_i(v)=\VEval_i(v')$ and $\Mon_i(v,e)=\Mon_i(v',e)$ (as
      partitions of $\VEval_i(v)$).
    \item There are containments
      $\Mon_{i^{\mov}(v,e)}(v',e)\subseteq\Mon_{i^{\mov}(v,e)}(v,e)$
      and
      $\Mon_{i^{\mov}(v',e)}(v,e)\subseteq\Mon_{i^{\mov}(v',e)}(v',e)$,
      and the relation between complements holds:
      \begin{align*}
      \Mon_{i^{\mov}(v,e)}(v,e)\setminus\Mon_{i^{\mov}(v,e)}(v',e)=\Mon_{i^{\mov}(v',e)}(v',e)\setminus\Mon_{i^{\mov}(v',e)}(v,e).
      \end{align*}
      \item For
    $\eta\in\Mon_{i^{\mov}(v,e)}(v,e)\setminus\Mon_{i^{\mov}(v,e)}(v',e),$
    we have $\eta\in\frac{1}{q(e)}\Z$.
    \end{itemize}


  \item If $v\in V(\Gamma)$ with $g_v=0,$ $E(\Gamma,v)=\{e_v\}$, and
    $\Mark^{-1}(v)=\emptyset,$ then $\Mon(v,e_v)$ is the ``trivial''
    multipartition of $\MultiPart(\VEval(v))$ whose elements are all
    1.\label{item:V1Mon}
  \item If $v\in V(\Gamma)$ with $g_v=0,$ $E(\Gamma,v)=\{e_v\}$, and
    $\Mark^{-1}(v)=\{j\},$ then
    $\Mon(v,e_v)=\Mon(j).$\label{item:V11Mon}
  \item If $v\in V(\Gamma)$ with $g_v=0,$
    $E(\Gamma,v)=\{e_v^1,e_v^2\}$, and $\Mark^{-1}(v)=\emptyset,$ then
    $\Mon(v,e_v^1)=\Mon(v,e_v^2)$.\label{item:V2Mon}
  \end{enumerate}
\end{Def}
For brevity, we will write $\Gamma$ instead of $(\Gamma,\Mark,\{g_v\},\VEval,q,\vec\Mon)$. For a fixed $\Gamma$,
we introduce notation:
\begin{itemize}
\item Each part $\eta$ of the multipartitions $\Mon(v,e)$ and
  $\Mon(j)$ is an element of one of the multisets
  $(\Mon_0,\ldots,\Mon_r),$ and we write $i(\eta)$ for the element of
  $\{0,\ldots,r\}$ such that $\eta\in\Mon_{i(\eta)}$.
\item Let $\Mov(e)$ be the difference multiset
  $\Mon_{i^{\mov}(v,e)}(v,e)\setminus\Mon_{i^{\mov}(v,e)}(v',e)$, and
  let $\Stat(e):=\Mon(v,e)\setminus\Mov(e)$ be its complement.  By
  condition \ref{item:PartitionsAgree}, $\Mov(e)$ and $\Stat(e)$
  depend on $e$ rather than $(v,e)$. $\Mov(e)$ is the submultiset of
  ``moving parts'' of $\Mon(v,e)$ (or $\Mon(v',e)$), and $\Stat(e)$ is
  the submultiset of ``stationary parts''. Note that $\Stat(e)$ is a
  $\{0,\ldots,r\}$-labeled multiset. We write
  $\mov(e):=\abs{\Mov(e)}$.
\item Let $\Mon(e)$ be the partition $\bigcup_k\Mon_k(v,e)$ of $d$,
  which again by condition \ref{item:PartitionsAgree} depends only on
  $e$. Note that unlike $\Mon(v,e)$ and $\Mon(j)$, $\Mon(e)$ is only
  a partition of $d$, rather than a multipartition.
\item For $v$ satisfying any one of conditions \ref{item:V1Mon},
  \ref{item:V11Mon}, or \ref{item:V2Mon}, we write $\Mon(v)$ for
  $\Mon(v,e_v)$ or $\Mon(v,e_v^1)=\Mon(v,e_v^2).$
\item For an edge $e\in E(\Gamma)$, let
  $\beta(e)=\sum_{\eta\in\Mov(e)}\beta_\eta(e):=\sum_{\eta\in\Mov(e)}q(e)\eta.$
  Let $\beta(\Gamma)=\sum_{e\in E(\Gamma)}\beta(e).$
\item Denote by $\Graphs_{g,n}(\Sym^d\P^r,\beta)$ the finite set of
  $n$-marked genus-$g$ $\Sym^d\P^r$-decorated graphs $\Gamma$
  with $\beta(\Gamma)=\beta$. We refer to these as simply
  ``decorated graphs'' when no confusion is possible.
\end{itemize}
\begin{lem}\label{lem:FixedMapDeterminesGraph}
  There is a natural map
  $$\Psi:(\Mbar_{g,n}(\Sym^d\P^r,\beta))^T\to\Graphs_{g,n}(\Sym^d\P^r,\beta).$$
\end{lem}
\begin{proof}
  Let
  $(f:(C,b_1,\ldots,b_n)\to\Sym^d\P^r)\in(\Mbar_{g,n}(\Sym^d\P^r,\beta))^T$. Define
  sets $V(\Gamma)$ equal to the set of connected components of
  $f^{-1}((\Sym^d\P^r)^T),$ and $E(\Gamma)$ the set of noncontracted
  irreducible components of $C$. By Lemma
  \ref{lem:IrreducibleFixedMap}, associated to each noncontracted
  irreducible component of $C$ are two $T$-fixed points $P_{i_1}$ and
  $P_{i_2}$, so these define a graph $\Gamma.$

  We now define the various decorations of $\Gamma.$ Let $\Mark(j)$ be
  the connected component of $f^{-1}((\Sym^d\P^r)^T)$ containing
  $b_j$. Let $\VEval(v)$ be the $(r+1)$-tuple representing the
  $T$-fixed point $f(v)$, from Section \ref{sec:OrbitsOfT}. Let
  $q(e)=q$ be the rational number determined by Lemma
  \ref{lem:IrreducibleFixedMap}. Let $\Mon(j)$ be the monodromy of $f$
  at $b_j$. This is a conjugacy class in the isotropy group
  $G_{f(b_j)},$ and these are in natural bijection with
  $\MultiPart(\VEval(\Mark(j)))$. Finally, let $\Mon(v,e)$ be the
  monodromy of $f$ at the point $\xi(v,e)$ where the connected
  component $v$ meets the irreducible component $e$; this monodromy is
  naturally an element of $\MultiPart(\VEval(v))$.

  Condition \eqref{item:PartitionsAgree} for decorated graphs follows
  from the description in Lemma
  \ref{lem:IrreducibleFixedMap}. Condition \eqref{item:V1Mon} follows
  from Remark \ref{rem:TrivialMonodromy}. Condition
  \eqref{item:V11Mon} holds because for such $v$, $\xi(v,e_v)$ and
  $b_j$ are the same point of $C$. Condition \eqref{item:V2Mon} is
  true for the same reason, together with the fact that the inverse of
  a conjugacy class in $S_d$ is itself.
\end{proof}
\subsection{Classifying the connected components of
  \texorpdfstring{$(\Mbar_{g,n}(\Sym^d\P^r,\beta))^T$}{M\_{}\{g,n\}(Sym\^{}d P\^{}r,beta)\^{}T}}
The map in Lemma \ref{lem:FixedMapDeterminesGraph} gives a
stratification of $(\Mbar_{g,n}(\Sym^d\P^r,\beta))^T$ into (as we will
see) locally closed substacks. In this section we describe how the
strata fit together. To be precise, what we show does not quite
classify connected components, but rather certain open and closed
substacks --- see Remark \ref{rem:ConnectedComponentsOfFixedLocus}.
\begin{notation}\label{not:StableVertex}
  Let $(f:C\to\Sym^d\P^r)\in\Psi^{-1}(\Gamma)$. If
  $v\in V(\Gamma)$, then from Lemma \ref{lem:FixedMapDeterminesGraph},
  $v$ corresponds to a subcurve of $C$. We denote this by
  $C_v$. Similarly, for $e\in E(\Gamma)$, we write $C_e$ for the
  corresponding irreducible component of $C$. For
  $(v,e)\in F(\Gamma)$, we write $\xi(v,e)$ for the point
  $v\cap e\in C$, again using the notation of the proof of Lemma
  \ref{lem:FixedMapDeterminesGraph}. We say $(v,e)$ is a \emph{special
    flag} if $\xi(v,e)$ is a special point, equivalently if $g_v>0$ or
  $\val(v)>1$ or $\Mark^{-1}(v)\ne\emptyset$. Note that the isotropy
  group at $\xi(v,e)$ (resp. $b_j$) has order $\lcm(\Mon(v,e))$
  (resp. $\lcm(\Mon(j))$). For brevity we denote this by $r(v,e)$
  (resp. $r_j$).

  We adopt the following notation from \cite{Liu2013}, corresponding
  to conditions \ref{item:V1Mon}, \ref{item:V11Mon}, and
  \ref{item:V2Mon} in Definition \ref{Def:DecoratedGraph}:
  \begin{align*}
    V^1({\Gamma})&=\{v\in V(\Gamma)|g_v=0,\val(v)=1,\abs{\Mark^{-1}(v)}=0\}\\
    V^{1,1}({\Gamma})&=\{v\in
                             V(\Gamma)|g_v=0,\val(v)=1,\abs{\Mark^{-1}(v)}=1\}\\
    V^2(\Gamma)&=\{v\in
                       V(\Gamma)|g_v=0,\val(v)=2,\abs{\Mark^{-1}(v)}=0\}\\
    V^S({\Gamma})&=V(\Gamma)\setminus(V^1({\Gamma})\cup
                         V^{1,1}({\Gamma})\cup V^2(\Gamma)).
  \end{align*}
  We call vertices in $V^S({\Gamma})$ \emph{stable}. A vertex
  $v$ is stable if and only if $C_v$ is 1-dimensional (rather than a
  single point).

  For $v\in V^1({\Gamma})\cup V^{1,1}({\Gamma})$, we always write
  $E(\Gamma,v)=\{e_v=(v,v')\}$. For $v\in V^2(\Gamma)$, we always write
  $E(\Gamma,v)=\{e_v^1=(v,v_1),e_v^2=(v,v_2)\}.$
\end{notation}
\begin{Def}\label{Def:CombinableEdges}
  Let $\Gamma\in\Graphs_{g,n}(\Sym^d\P^r,\beta)$, and let
  $e_1,e_2\in E(\Gamma).$ We say $e_1$ and $e_2$ are
  \emph{combinable}, and write $e_1\parallel e_2$, if there exists
  $v\in V^2(\Gamma)$ with $\{e_1,e_2\}=\{e_v^1,e_v^2\}$ and the
  following hold:
  \begin{itemize}
  \item $q(e_1)=q(e_2)$,
  \item $i^{\mov}(v_1,e_1)=i^{\mov}(v,e_2)$ and $i^{\mov}(v,e_1)=i^{\mov}(v_2,e_2).$
  \end{itemize}
  Denote by $\mathcal{P}\subseteq\binom{E(\Gamma)}{2}$ the set of
  pairs $\{\{e_1,e_2\}:e_1\parallel e_2\}$.
\end{Def}
\begin{Def}
  Let $(v,e)\in F(\Gamma)$. We say $(v,e)$ is a \emph{steady flag} if
  either of the following holds:
  \begin{enumerate}
  \item $v\not\in V^{2}(\Gamma)$, or
  \item $v\in V^{2}(\Gamma)$ and $\{e_v^1,e_v^2\}\not\in\mathcal{P}$.
  \end{enumerate}
\end{Def}
\begin{Def}\label{Def:CombineEdges}
  Let $\Gamma\in\Graphs_{g,n}(\Sym^d\P^r,\beta)$ and let
  $e_1\parallel e_2$ be a pair of combinable edges. We may define a
  new decorated graph $\Comb(\Gamma,e_1\parallel
  e_2)\in\Graphs_{g,n}(\Sym^d\P^r,\beta)$ by \textbf{combining $e_1$
    and $e_2$}. In other words, we delete the vertex $v$ and the edges
  $e_1$ and $e_2,$ and add an edge $e_{12}=(v_1,v_2)$ with
  $q(e_{12})=q(e_1)=q(e_2),$ $\Mon(v_1,e_{12})=\Mon(v_1,e_1),$ and
  $\Mon(v_2,e_{12})=\Mon(v_2,e_2).$ (See Figure
  \ref{fig:CombineEdges}.) It is easy to check that
  $\Comb(\Gamma,e_1\parallel
  e_2)$ satisfies the two conditions of a decorated   
  graph, and that $\Mov(e_{12})=\Mov(e_1)\cup\Mov(e_2),$ and
  $\Mon(e_{12})=\Mon(e_1)=\Mon(e_2).$ There is a natural map
  $\phi_{e_1,e_2}:E(\Gamma)\to E(\Comb(\Gamma,e_1\parallel
  e_2))$ with $\phi_{e_1,e_2}(e_1)=\phi_{e_1,e_2}(e_2)=e_{12},$ and
  $\phi_{e_1,e_2}(e)=e$ for $e\in E(\Gamma)\setminus\{e_1,e_2\}$.
\end{Def}
\begin{figure}
  \centering
  \begin{tikzpicture}
    \draw[dashed] (1,0) circle (1);
    \draw (.5,0) node {$\Gamma_1$};
    \draw (2,0) node {$\bullet$} node[left] {$v_1$};
    \draw (2,0)--(4,0);
    \draw (2.5,0) node[below] {$e_1$};
    \draw (3.5,0) node[below] {$e_2$};
    \draw (3,0) node {$\bullet$} node[above] {$v$};
    \draw (4,0) node {$\bullet$} node[right] {$v_2$};
    \draw[dashed] (5,0) circle (1);
    \draw (5.5,0) node {$\Gamma_2$};
    \draw[->] (7,0)--(8,0);
    \draw[dashed] (10,0) circle (1);
    \draw (9.5,0) node {$\Gamma_1$};
    \draw (11,0) node {$\bullet$} node[left] {$v_1$};
    \draw (11,0)--(12.5,0);
    \draw (11.75,0) node[below] {$e_{12}$};
    \draw (12.5,0) node {$\bullet$} node[right] {$v_2$};
    \draw[dashed] (13.5,0) circle (1);
    \draw (14,0) node {$\Gamma_2$};
  \end{tikzpicture}
  \caption{Combining edges}
  \label{fig:CombineEdges}
\end{figure}
\begin{prop}\label{prop:CombiningCommutes}
  Let $\Gamma\in\Graphs_{g,n}(\Sym^d\P^r,\beta)$, and let
  $e_1\parallel e_2$ and $e_1'\parallel e_2'$ be two distinct pairs of
  combinable edges of $\Gamma$. Then
  $\phi_{e_1,e_2}(e_1')\parallel\phi_{e_1,e_2}(e_2')$ as edges of
  $\Comb(\Gamma,e_1\parallel e_2)$ and
  $\phi_{e_1',e_2'}(e_1)\parallel\phi_{e_1',e_2'}(e_2)$ as edges of
  $\Comb(\Gamma,e_1'\parallel e_2')$. Also, combining pairs
  commutes, i.e.
  \begin{align*}
    \Comb(\Comb(\Gamma,e_1\parallel e_2),e_1'\parallel
    e_2')\cong\Comb(\Comb(\Gamma,e_1'\parallel
    e_2'),e_1\parallel e_2),
  \end{align*}
  and this isomorphism identifies the maps
  $\phi_{e_1,e_2}\circ\phi_{e_1',e_2'}$ and
  $\phi_{e_1',e_2'}\circ\phi_{e_1,e_2}.$
\end{prop}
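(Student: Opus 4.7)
The plan is to reduce to a case analysis on the relationship between the two combinable pairs. Each combinable pair $\{e_1, e_2\}$ uniquely determines the vertex $v \in V^2(\tilde\Gamma)$ with $\{e_v^1, e_v^2\} = \{e_1, e_2\}$; since the two given pairs are distinct, they determine distinct vertices $v \ne v'$. Moreover, the pairs can share at most one edge, because two distinct edges cannot simultaneously connect the same two vertices in the tree $\Gamma$. After a relabeling within each pair, there are then two cases: (A) the pairs are disjoint, and (B) $e_1 = e_1'$ is a common edge, which must then connect $v$ and $v'$, so that $v_1 = v'$ and $v_1' = v$ in the notation of Definition \ref{Def:CombinableEdges}.

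Case (A) is essentially formal: the two combining operations touch disjoint subsets of $V(\Gamma) \cup E(\Gamma)$, so each leaves unchanged the vertex, edges, $\Mon$-decorations, $q$-value and $i^{\mov}$ data witnessing the combinability of the other pair. Both the preservation of combinability and the commutativity of the two operations, together with the induced $\phi$ maps, follow directly from unpacking Definition \ref{Def:CombineEdges}.

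The technical heart is Case (B). I form $e_{12}$ by combining $(e_1, e_2)$; this edge runs from $v_1 = v'$ to $v_2$ and inherits $\Mon(v', e_{12}) = \Mon(v', e_1)$ and $\Mon(v_2, e_{12}) = \Mon(v_2, e_2)$. To verify that $e_{12} \parallel e_2'$ in the new tree at $v'$, I first check $q(e_{12}) = q(e_1) = q(e_1') = q(e_2')$, and then chain the $i^{\mov}$ identities: $i^{\mov}(v_2, e_{12}) = i^{\mov}(v_2, e_2) = i^{\mov}(v, e_1) = i^{\mov}(v, e_1') = i^{\mov}(v', e_2')$, where the middle equality is the hypothesis $e_1 = e_1'$ and the two outer equalities are the combinability conditions at $v$ and $v'$ respectively. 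The second $i^{\mov}$ condition is verified analogously from $i^{\mov}(v', e_{12}) = i^{\mov}(v', e_1) = i^{\mov}(v', e_1') = i^{\mov}(v_2', e_2')$. By symmetry, combinability of $\{\phi_{e_1',e_2'}(e_1),\phi_{e_1',e_2'}(e_2)\}$ in $\Comb(\tilde\Gamma, e_1' \parallel e_2')$ follows the same way.

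Finally, I compare the two doubly-combined trees. In Case (B) each is obtained from $\tilde\Gamma$ by deleting $\{v, v'\}$ together with the three edges $\{e_1, e_2, e_2'\}$ and inserting a single edge $e$ from $v_2$ to $v_2'$ with $q(e) = q(e_1)$, $\Mon(v_2, e) = \Mon(v_2, e_2)$, and $\Mon(v_2', e) = \Mon(v_2', e_2')$; tracking Definition \ref{Def:CombineEdges} shows that both orders produce exactly this data. The induced isomorphism identifies the compositions $\phi_{e_1',e_2'} \circ \phi_{e_1,e_2}$ and $\phi_{e_1,e_2} \circ \phi_{e_1',e_2'}$, because both send each of $e_1, e_2, e_2'$ to $e$ and act as the identity on every other edge. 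The only real obstacle is the bookkeeping in Case (B), and it reduces entirely to the short $i^{\mov}$ chain displayed above.
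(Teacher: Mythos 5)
Your proposal is correct and follows essentially the same route as the paper: a case split on whether the two pairs share an edge, with the disjoint case handled formally and the shared-edge case verified by chaining the $q$ and $i^{\mov}$ identities from Definition \ref{Def:CombinableEdges} and then identifying both doubly-combined trees (and both composite $\phi$ maps) with the single explicit edge replacing the chain. Your relabeling so that the shared edge is $e_1=e_1'$ is only cosmetic, and your $i^{\mov}$ chains are, if anything, slightly more explicit than the paper's argument.
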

\begin{proof}
  There are two cases, pictured in the left side of Figure
  \ref{fig:CombineCommutes}; either the pairs $e_1\parallel e_2$ and
  $e_1'\parallel e_2'$ share an edge, or they do not.
  \begin{figure}
    \begin{center}
      \begin{tikzpicture}[scale=1.2]
        \draw[dashed] (1,1.8) circle (.5);
        \draw (.75,1.8) node {$\Gamma_1$};
        \draw (1.5,1.8) node {$\bullet$};
        \draw (1.5,1.8) node[left] {$v_1$};
        \draw (1.5,1.8)--(2.5,1.8);
        \draw (2,1.8) node[below] {$e_1$};
        \draw (2.5,1.8) node {$\bullet$};
        \draw (2.5,1.8) node[above] {$v$};
        \draw (2.5,1.8)--(3.5,1.8);
        \draw (3,1.8) node[below] {$e_2=e_1'$};
        \draw (3.5,1.8) node {$\bullet$};
        \draw (3.5,1.8) node[above] {$v'$};
        \draw (3.5,1.8)--(4.5,1.8);
        \draw (4,1.8) node[below] {$e_2'$};
        \draw (4.5,1.8) node {$\bullet$};
        \draw (4.5,1.8) node[right] {$v_2'$};
        \draw[dashed] (5,1.8) circle (.5);
        \draw (5.25,1.8) node {$\Gamma_2$};
        \draw[->] (5.8,1.8)--(6.7,1.8);
        \draw[dashed] (7.5,1.8) circle (.5);
        \draw (7.25,1.8) node {$\Gamma_1$};
        \draw (8,1.8) node {$\bullet$};
        \draw (8,1.8) node[left] {$v_1$};
        \draw (8,1.8)--(9,1.8);
        \draw (8.5,1.8) node[below] {$e$};
        \draw (9,1.8) node {$\bullet$};
        \draw (9,1.8) node[right] {$v_2'$};
        \draw[dashed] (9.5,1.8) circle (.5);
        \draw (9.75,1.8) node {$\Gamma_2$};
        \draw[dashed] (-.5,0) circle (.5);
        \draw (-.75,0) node {\small$\Gamma_1$};
        \draw (0,0) node {\small$\bullet$};
        \draw (0,0) node[left] {\small$v_1$};
        \draw (0,0)--(1,0);
        \draw (0.5,0) node[below] {\small$e_1$};
        \draw (1,0) node {\small$\bullet$};
        \draw (1,0) node[above] {\small$v$};
        \draw (1,0)--(2,0);
        \draw (1.5,0) node[below] {\small$e_2$};
        \draw (2,0) node {\small$\bullet$};
        \draw (2,0) node[right] {\small$v_2$};
        \draw[dashed] (2.5,0) circle (.5);
        \draw (2.5,0.1) node[above] {\small$\Gamma_2$};
        \draw (3,0) node {\small$\bullet$};
        \draw (3,0) node[left] {\small$v_1'$};
        \draw (3,0)--(4,0);
        \draw (3.5,0) node[below] {\small$e_1'$};
        \draw (4,0) node {\small$\bullet$};
        \draw (4,0) node[above] {\small$v'$};
        \draw (4,0)--(5,0);
        \draw (4.5,0) node[below] {\small$e_2'$};
        \draw (5,0) node {\small$\bullet$};
        \draw (5,0) node[right] {\small$v_2'$};
        \draw[dashed] (5.5,0) circle (.5);
        \draw (5.75,0) node {\small$\Gamma_3$};
        \draw[->] (6.2,0)--(6.75,0);
        \draw[dashed] (7.5,0) circle (.5);
        \draw (7.25,0) node {\small$\Gamma_1$};
        \draw (8,0) node {\small$\bullet$};
        \draw (8,0) node[left] {\small$v_1$};
        \draw (8,0)--(9,0);        
        \draw (8.5,0) node[below] {\small$e_{12}$};
        \draw (9,0) node {\small$\bullet$};
        \draw (9,0) node[right] {\small$v_2$};
        \draw[dashed] (9.5,0) circle (.5);
        \draw (9.5,0.1) node[above] {\small$\Gamma_2$};
        \draw (10,0) node {\small$\bullet$};
        \draw (10,0) node[left] {\small$v_1'$};
        \draw (10,0)--(11,0);        
        \draw (10.5,0) node[below] {\small$e_{12}'$};
        \draw (11,0) node {\small$\bullet$};
        \draw (11,0) node[right] {\small$v_2'$};
        \draw[dashed] (11.5,0) circle (.5);
        \draw (11.75,0) node {\small$\Gamma_3$};
      \end{tikzpicture}
    \end{center}
    \caption{Combining two pairs of edges}
    \label{fig:CombineCommutes}
  \end{figure}
  Suppose we are in the first case, i.e. the top line of Figure
  \ref{fig:CombineCommutes}. By definition of $\phi_{e_1,e_2},$ the
  edges $\phi_{e_1,e_2}(e_1')$ and $\phi_{e_1,e_2}(e_2')$ meet at $v'$
  (precisely, at the corresponding vertex in
  $\Comb(\Gamma,e_1\parallel e_2)$), and satisfy the three
  conditions of Definition \ref{Def:CombinableEdges}. Thus
  $\phi_{e_1,e_2}(e_1')\parallel\phi_{e_1,e_2}(e_2')$. Similarly
  $\phi_{e_1',e_2'}(e_1)\parallel\phi_{e_1',e_2'}(e_2)$. To see that
  $\Comb(\Comb(\Gamma,e_1\parallel e_2),e_1'\parallel
  e_2')\cong\Comb(\Comb(\Gamma,e_1'\parallel e_2'),e_1\parallel
  e_2)$, we note that both are obtained from the graph in Figure
  \ref{fig:CombineCommutes} by replacing the three edges shown with a
  single edge $e$ connecting $v_1$ to $v_2'.$ The decorations on this
  edge are:
  \begin{itemize}
  \item $q(e):=q(e_1)=q(e_2)=q(e_2')$,
  \item $\Mon(e):=\Mon(e_1)=\Mon(e_2)=\Mon(e_2')$,
  \item $i^{\mov}(v_1,e):=i^{\mov}(v_1,e_1)=i^{\mov}(v,e_2)=i^{\mov}(v',e_2')$, and
  \item $i^{\mov}(v_2',e):=i^{\mov}(v_2,e_2')=i^{\mov}(v',e_2)=i^{\mov}(v,e_1)$,
  \end{itemize}
  where the equalities follow from $e_1\parallel e_2$ and
  $e_2\parallel e_2'$.  The maps $\phi_{e_1,e_2}\circ\phi_{e_1',e_2'}$
  and $\phi_{e_1',e_2'}\circ\phi_{e_1,e_2}$ both send all of $e_1,$
  $e_2=e_1'$, and $e_2'$ to $e.$

  The second case (the bottom line of \ref{fig:CombineCommutes}) is a
  special case of this argument, so we omit it.
\end{proof}
\begin{cor}\label{cor:CombineSetOfEdges}
  Let $\Gamma\in\Graphs_{g,n}(\Sym^d\P^r,\beta)$, and let
  $\mathcal{E}$ be any subset of the set $\mathcal{P}(\Gamma)$
  of pairs of combinable edges in $\Gamma$. Then there is a
  well-defined graph
  $\Comb(\Gamma,\mathcal{E})\in\Graphs_{g,n}(\Sym^d\P^r,\beta)$
  obtained by combining all edge pairs in $\mathcal{E}$, in any order,
  and a well-defined associated map $\phi_{\mathcal{E}}:E(\Gamma)\to
  E(\Comb(\Gamma,\mathcal{E}))$. Furthermore, $\mathcal{E}$ is
  determined by the graphs $\Gamma$ and
  $\Comb(\Gamma,\mathcal{E})$, and the map $\phi_{\mathcal{E}}$.
\end{cor}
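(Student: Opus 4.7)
The plan is to prove the corollary by induction on $|\mathcal{E}|$, with Proposition \ref{prop:CombiningCommutes} as the key input. The base case $|\mathcal{E}|\le 1$ is immediate from Definition \ref{Def:CombineEdges}. For $|\mathcal{E}|\ge 2$, I would first verify that the combinations can be iterated: fix any $P_1=\{e_1,e_2\}\in\mathcal{E}$ and any other $P_j=\{e_1^j,e_2^j\}\in\mathcal{E}\setminus\{P_1\}$. Proposition \ref{prop:CombiningCommutes} states that $\{\phi_{e_1,e_2}(e_1^j),\phi_{e_1,e_2}(e_2^j)\}$ is again a $2$-element combinable pair in $\Comb(\tilde\Gamma,P_1)$, whether or not $P_1$ and $P_j$ share an edge. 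Hence $\phi_{e_1,e_2}$ induces an injection from $\mathcal{E}\setminus\{P_1\}$ into $\mathcal{P}(\Comb(\tilde\Gamma,P_1))$, and the inductive hypothesis applied to this image yields a well-defined tree $\Comb(\Comb(\tilde\Gamma,P_1),\phi_{e_1,e_2}(\mathcal{E}\setminus\{P_1\}))$ and a well-defined composite map of edge sets.

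Next I would establish order-independence. Any two orderings $(P_{\pi(1)},\ldots,P_{\pi(k)})$ of $\mathcal{E}$ are related by a sequence of adjacent transpositions, so it suffices to show that swapping two consecutive combinations produces an isomorphic decorated tree under which the composite map on edges agrees. This is exactly the commutativity half of Proposition \ref{prop:CombiningCommutes}, applied in the (possibly partially combined) intermediate tree. Therefore $\Comb(\tilde\Gamma,\mathcal{E})$ and $\phi_{\mathcal{E}}$, built as iterated combinations in any chosen order, depend only on $\mathcal{E}$.

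Finally, to recover $\mathcal{E}$ from $\tilde\Gamma$, $\Comb(\tilde\Gamma,\mathcal{E})$, and $\phi_{\mathcal{E}}$, I would note that for each $e'\in E(\Comb(\tilde\Gamma,\mathcal{E}))$ the fiber $\phi_{\mathcal{E}}^{-1}(e')$ is a set of edges forming a simple path in $\Gamma$ whose internal vertices are precisely the vertices of $V^2(\tilde\Gamma)$ collapsed when passing to $\Comb(\tilde\Gamma,\mathcal{E})$. Consequently
\[
\mathcal{E}\;=\;\bigl\{\,\{e_a,e_b\}\,:\,\phi_{\mathcal{E}}(e_a)=\phi_{\mathcal{E}}(e_b),\ e_a\ne e_b,\ e_a\text{ and }e_b\text{ share a vertex in }\Gamma\,\bigr\}.
\]
The only point in the whole argument requiring care is the bookkeeping for the case in which two pairs in $\mathcal{E}$ share an edge; I expect this to be the main obstacle, but it has already been absorbed into the two-case analysis of Proposition \ref{prop:CombiningCommutes}, so no further work is needed and the remainder is purely formal.
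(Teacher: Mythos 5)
Your argument is correct and follows essentially the same route as the paper: well-definedness by iterating Proposition \ref{prop:CombiningCommutes} (the paper simply says ``repeatedly applying'' it, while you spell out the induction and adjacent transpositions), and recovery of $\mathcal{E}$ from the fibers of $\phi_{\mathcal{E}}$. Your criterion ``$\phi_{\mathcal{E}}(e_a)=\phi_{\mathcal{E}}(e_b)$ and $e_a,e_b$ share a vertex'' is equivalent to the paper's ``$e_a\parallel e_b$ and $\phi_{\mathcal{E}}(e_a)=\phi_{\mathcal{E}}(e_b)$,'' both resting on the same factorization of $\phi_{\mathcal{E}}$ into single edge combinations together with acyclicity of $\Gamma$.
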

\begin{proof}
  The existence statement comes from repeatedly applying Proposition
  \ref{prop:CombiningCommutes}. The uniqueness statement amounts to
  the fact that if $e_1\parallel e_2$ is a combinable pair of edges in
  $\Gamma,$ then
  $\phi_{\mathcal{E}}(e_1)=\phi_{\mathcal{E}}(e_2)$ if and only if
  $(e_1,e_2)\in\mathcal{E}.$ This follows from factoring
  $\phi_{\mathcal{E}}$ as a sequence of edge combination maps as in
  Definition \ref{Def:CombineEdges}.
\end{proof}
Corollary \ref{cor:CombineSetOfEdges} may be restated as
follows. Definition \ref{Def:CombineEdges} determines a partial order
$\le$ on $\Graphs_{g,n}(\Sym^d\P^r,\beta)$, where
$\Gamma'\le\Gamma$ if $\Gamma'$ can be obtained from
$\Gamma$ by combining edges. Corollary
\ref{cor:CombineSetOfEdges} then states that for
$\Gamma\in\Graphs_{g,n}(\Sym^d\P^r,\beta)$, there is a natural
order-reversing bijection between
$\{\Gamma':\Gamma'\le\Gamma\}$ and
$\{\text{subsets of $\mathcal{P}(\Gamma)$}\},$ where the latter
is partially ordered by inclusion. In particular, associated to
$\Gamma$ is a unique \emph{minimal} decorated graph
$\Comb(\Gamma,\mathcal{P}(\Gamma)).$ Denote by
$\Graphs_{g,n}^{\min}(\Sym^d\P^r,\beta)$ the set of $\le$-minimal
elements of $\Graphs_{g,n}(\Sym^d\P^r,\beta)$.
\begin{thm}\label{thm:GraphsAndStrata}
  Let $\Gamma_0\in\Graphs_{g,n}(\Sym^d\P^r,\beta)$. The closure
  of $\Psi^{-1}(\Gamma_0)$ is
  \begin{align*}
    \bigcup_{\substack{\Gamma\in\Graphs_{g,n}(\Sym^d\P^r,\beta)\\\Gamma_0\le\Gamma}}\Psi^{-1}(\Gamma),
  \end{align*}
  where $\Psi$ is the map from Lemma \ref{lem:FixedMapDeterminesGraph}.
\end{thm}
\begin{lem}\label{lem:UnionOfChainsMapping}
  Let \raisebox{-5pt}{
    \begin{tikzpicture}
      \draw (1,0.1) node {$\Gamma_0=$};
      \draw (2,0) node {$\bullet$} node[left] {$v_1$};
      \draw (2,0)--(3,0);
      \draw (2.5,0) node[above] {$e$};
      \draw (3,0) node {$\bullet$} node[right] {$v_2$,};
    \end{tikzpicture}
  } where each of $v_1$ and $v_2$ contains a single marked point,
  $b_1$ and $b_2$, and $g_{v_1}=g_{v_2}=0$. Let $f:C\to\Sym^d\P^r$ be
  in the closure of $\Psi^{-1}(\Gamma_0)$, and let
  $\rho:C'\to C$ and $f':C'\to \P^r$ be the associated maps. Write
  $C'_\eta$ for a noncontracted irreducible component of $C'$,
  corresponding to $\eta\in\Mov(e)\subseteq\Mon(e)$, as described in
  Lemma \ref{lem:IrreducibleFixedMap}. Denote by
  $L_{e}:=L_{(i^{\mov}(v_1,e),i^{\mov}(v_2,e))}$ the line in $\P^r$
  connecting $P_{i^{\mov}(v_1,e)}$ and $P_{i^{\mov}(v_2,e)}$. Then:
  \begin{enumerate}
  \item $C$ and $C_\eta'$ are nodal chains of rational
    curves,\label{item:NodalChain}
  \item $f'|_{C_\eta'}$ maps one irreducible component of $C_\eta'$
    to $L_{e}$ with degree
    $\beta_\eta(e)=q(e)\cdot\eta$ (on coarse moduli spaces), and
    is fully ramified at the two special points of this component,
    and\label{item:OneNoncontracted}
  \item $f'|_{C_\eta'}$ contracts all other irreducible components of
    $C_\eta'$ to one of the endpoints of
    $L_{e}$.\label{item:RestContracted}
  \end{enumerate}
  That is, the restriction to $C_\eta'$ of a point in
  $\bar{\Psi^{-1}(\Gamma_0)}$ may be represented as in Figure
  \ref{fig:FixedMap} (where despite appearances we mean for the map to
  $L_{e}$ to have a single preimage point over each of
  $P_{i^{\mov}(v_1,e)}$ and $P_{i^{\mov}(v_2,e)}$).
\end{lem}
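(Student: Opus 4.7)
The plan is to invoke Theorem~\ref{thm:TreesAndStrata} to replace the closure condition with a specific decorated tree for $f$, and then read the geometric structure of $C$ and $C'_\eta$ directly off the decorations. Since $\tilde\Gamma_0$ consists of two (marked, hence stable) vertices joined by a single edge, Corollary~\ref{cor:CombineSetOfEdges} forces any $\tilde\Gamma \ge \tilde\Gamma_0$ for which $f \in \Psi^{-1}(\tilde\Gamma)$ to be a chain $v_1 = w_0, w_1, \dots, w_k = v_2$ with edges $e'_1, \dots, e'_k$, where all intermediate $w_i$ lie in $V^2(\tilde\Gamma)$ and each consecutive pair satisfies $e'_i \parallel e'_{i+1}$. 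In particular, $C$ is already known to be the nodal chain of rational components $C_{e'_1}, \dots, C_{e'_k}$, establishing the half of (\ref{item:NodalChain}) concerning $C$.

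Next, for a fixed part $\eta \in \Mov(e)$, I would track its status along the chain. The identities $\Mon(e_{12}) = \Mon(e_1) = \Mon(e_2)$ and $\Mov(e_{12}) = \Mov(e_1) \cup \Mov(e_2)$ (a multiset union in which the right-hand sides are disjoint) from Definition~\ref{Def:CombineEdges}, applied inductively to all pairs in $\mathcal{P}(\tilde\Gamma)$ collapsed to obtain $\tilde\Gamma_0$, give $\eta \in \Mon(e'_i)$ for every $i$ and $\eta \in \Mov(e'_j)$ for a unique index $j$; thus $\eta \in \Stat(e'_i)$ for $i \ne j$. Moreover $q(e'_i) = q(e)$ for every $i$ since $q$ is preserved under combining.

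Then I would apply Lemma~\ref{lem:IrreducibleFixedMap} to the restriction of $f$ to each irreducible component $C_{e'_i}$: the part $\eta \in \Mon(e'_i)$ indexes a rational irreducible component $D_i^\eta \subseteq \rho^{-1}(C_{e'_i})$ that covers $C_{e'_i}$ with degree $\eta$ and is fully ramified at the two special points. For $i = j$, the same lemma yields that $D_j^\eta$ is noncontracted, mapping to $L_e$ with coarse degree $q(e'_j)\cdot\eta = q(e)\cdot\eta = \beta_\eta(e)$, fully ramified — this establishes (\ref{item:OneNoncontracted}). For $i \ne j$, $\eta$ is stationary so $D_i^\eta$ contracts to a $T$-fixed point, which must be one of the endpoints $P_{i^{\mov}(v_1,e)}$ or $P_{i^{\mov}(v_2,e)}$ of $L_e$ because the moving index of $\eta$ transitions between these two values only across $e'_j$ — this gives (\ref{item:RestContracted}). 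Finally, condition~\ref{item:V2Mon} of Definition~\ref{Def:DecoratedTree} yields $\Mon(w_i, e'_i) = \Mon(w_i, e'_{i+1})$ at each intermediate vertex, so the pieces $D_i^\eta$ and $D_{i+1}^\eta$ share the node $\xi(w_i, e'_i) = \xi(w_i, e'_{i+1})$ and glue into a single connected rational chain $C'_\eta$, finishing (\ref{item:NodalChain}).

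The subtlest step, and the one I expect to require the most care, is the inductive ``unique $j$'' claim together with the gluing: at each intermediate $w_i$ one must use the combinability conditions (matching of $i^{\mov}$-labels and of $q$) to check that the labeled $\eta$-part of the monodromy on the $e'_i$ side really corresponds to the same labeled $\eta$-part on the $e'_{i+1}$ side, so that $C'_\eta$ is genuinely connected and contains exactly one noncontracted irreducible component, rather than $\eta$ splitting into several independent strands or failing to match across some node.
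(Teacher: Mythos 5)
There is a genuine gap, and it is structural: your first step invokes Theorem~\ref{thm:TreesAndStrata} to conclude that $f$ lies in $\Psi^{-1}(\tilde\Gamma)$ for some chain $\tilde\Gamma\ge\tilde\Gamma_0$, but in the paper the inclusion $\bar{\Psi^{-1}(\tilde\Gamma_0)}\subseteq\bigcup_{\tilde\Gamma_0\le\tilde\Gamma}\Psi^{-1}(\tilde\Gamma)$ is itself deduced \emph{from} Lemma~\ref{lem:UnionOfChainsMapping} (the very first step of the proof of Theorem~\ref{thm:TreesAndStrata} cites the lemma). So your argument is circular. What you may legitimately extract from the hypothesis is only that $f$ is $T$-fixed (the fixed locus is closed), hence has \emph{some} associated decorated tree $\Psi(f)$; but nothing a priori relates $\Psi(f)$ to $\tilde\Gamma_0$. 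The facts you read off the decorations --- that $\Psi(f)$ is a chain whose intermediate vertices lie in $V^2$, that $q(e_i')=q(e)$ and $\Mon(e_i')=\Mon(e)$ on every edge, and that each part $\eta\in\Mov(e)$ is moving on exactly one edge and stationary on the others --- are precisely the conclusions that require an argument about degeneration; they cannot be taken as inputs. Once those combinatorial facts are granted, your ``reading off'' of claims (2) and (3) and the gluing of the strands $C_\eta'$ is fine, and your closing paragraph correctly identifies the delicate point (one connected strand per $\eta$ with a unique noncontracted component), but your proposal supplies no non-circular mechanism to establish it.

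The paper's proof works directly with a degenerating family: choose $\mathcal{C}\to S$ with generic fiber in $\Psi^{-1}(\tilde\Gamma_0)$ and special fiber $f:C\to\Sym^d\P^r$, pass to the components $\mathcal{C}_\eta'$ of the universal cover, and take the Stein factorization of $f'|_{\mathcal{C}_\eta'}$ relative to $S$. The pullbacks under $\bar{f'}$ of the two endpoint divisors $P_{i^{\mov}(v_1,e)}$, $P_{i^{\mov}(v_2,e)}$ contain no fiber component and are supported at single points on the generic fiber, hence also on the special fiber; since every component of $\bar{C_\eta'}$ surjects onto $L_e$, the special fiber $\bar{C_\eta'}$ is irreducible. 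This is what forces exactly one noncontracted component of $C_\eta'$, carrying the full degree $\beta_\eta(e)$ and fully ramified over the endpoints, i.e.\ claims (\ref{item:OneNoncontracted}) and (\ref{item:RestContracted}); the chain statement (\ref{item:NodalChain}) then follows from $T$-fixedness (contracted components and nodes lie over the endpoints, so noncontracted components have exactly two special points) together with stability. If you want to keep your route, you would first have to prove the relevant containment of Theorem~\ref{thm:TreesAndStrata} independently, which amounts to redoing exactly this family argument.
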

\begin{figure}
  \centering
  \begin{center}
    \begin{tikzpicture}
      \draw (0,0)--(1.2,.2); \draw (.8,.2)--(1.5,.1); \draw (1.7,.1)
      node {\tiny$\cdots$}; \draw (1.9,.1)--(2.6,.2); \draw
      (2.2,.2)--(3.6,0);
      \draw (3.4,-.1) .. controls (3.9,2.6) ..(4.1,1.1) .. controls
      (4.3,-.4) .. (4.8,2.3);
      \draw (4.4,2.2)--(5.8,2); \draw
      (5.4,2)--(6.1,2.1); \draw (6.3,2.1) node {\tiny$\cdots$};
      \draw (6.5,2.1)--(7.2,2); \draw (6.8,2)--(8,2.2); \draw[->]
      (4.1,-.4)--(4.1,-1.4); \draw (2,1) node {$C_\eta'$};
      \draw (0,-2)--(1.2,-1.8); \draw (.8,-1.8)--(1.5,-1.9); \draw
      (1.7,-1.9) node {\tiny$\cdots$}; \draw (1.9,-1.9)--(2.6,-1.8);
      \draw (2.2,-1.8)--(3.6,-2); \draw (3.4,-2)--(4.8,-1.8); \draw
      (4.4,-1.8)--(5.8,-2); \draw (5.4,-2)--(6.1,-1.9); \draw
      (6.3,-1.9) node {\tiny$\cdots$}; \draw (6.5,-1.9)--(7.2,-2);
      \draw (6.8,-2)--(8,-1.8); \draw (-.5,-2) node {$C$}; \draw
      (0,-2) node {$\bullet$}; \draw (8,-1.8) node {$\bullet$};
      \draw[->] (9,1)--(10,1); \draw (11,-.2)--(11,2); \draw
      (11.1,.9) node[right] {$L_{e}$};
      \draw (11,2) node {$\bullet$};
      \draw (11.1,2) node[right] {\small$P_{i^{\mov}(v_1,e)}$};
      \draw (11,-.2) node {$\bullet$};
      \draw (11.1,-.2) node[right] {\small$P_{i^{\mov}(v_2,e)}$};
    \end{tikzpicture}
    \caption{A portion of a map in $\bar{\Psi^{-1}(\Gamma_0)}$,
      with $\eta=1$ and $q(e)=3$}
    \label{fig:FixedMap}
  \end{center}
\end{figure}
\begin{proof}[Proof of Lemma]
  Let $f:\mathcal{C}\to\P^r$ be a family over $S$ of stable maps whose
  generic fiber is in $\Psi^{-1}(\Gamma_0),$ and let $s\in S$
  such that the fiber over $s$ is the stable map $f:C\to\Sym^d\P^r.$
  After an \'etale base change $\tilde S\to S$, $\mathcal{C}'$ is a
  union of connected components $\mathcal{C}_\eta'$ indexed by
  $\Mon(e),$ and the maps $\mathcal{C}_\eta'\to\mathcal{C}$ have
  degrees determined by $\Mon(e)$. Fix $\eta\in\Mov(e)$.
  
  Consider the Stein factorization of $f'$ relative to $S$:
  \begin{center}
    \begin{tikzcd}
      \mathcal{C}_{\eta}'\arrow[r,"\sf"]\arrow[rrr,"f'",bend right]&\bar{\mathcal{C}_{\eta}'}\arrow[r]\arrow[rr,"\bar{f'}",bend
      left]&\P^r\times
      S\arrow[r]&\P^r
    \end{tikzcd}
  \end{center}
  (The map $\sf$ contracts connected components of fibers of
  $\mathcal{C}'_\eta$ over $\P^r\times S$.)
  On a generic fiber of $\bar{\mathcal{C}_\eta'}$ over $S$, the
  divisors $\bar{f'}^*(P_{i^{\mov}(v_1,e)}$ and
  $\bar{f'}^*(P_{i^{\mov}(v_2,e)}$ are each supported on a single
  point. By the definition of $\sf$, on the special
  fiber $\bar{C_\eta'},$ these divisors are each supported on a
  connected locus, hence a single point --- specifically, the
  points $\sf(\rho^{-1}(b_1))$ and $\sf(\rho^{-1}(b_2))$,
  respectively.
  As any component of $\bar{C_\eta'}$ maps surjectively to $L_{e},$
  this implies that $\bar{C_\eta'}$ is irreducible. This proves claims
  \eqref{item:OneNoncontracted} and \eqref{item:RestContracted}.

  Since $f'$ is $T$-fixed, the above implies that a component of
  $C_\eta'$ not contracted by $f'$ has exactly two points that are
  nodes or are in $\rho^{-1}(b_1)$ or
  $\rho^{-1}(b_2)$. 

  If $C$ is not a chain, then since it is genus zero, some component
  $D$ has only one special point. By stability, there is a component
  of $\rho^{-1}(D)$ that is not contracted by $f'$. This contradicts
  the previous paragraph. Thus $C$ is a chain, and it follows that
  each $C_\eta'$ is a chain. This proves claim
  \eqref{item:NodalChain}.
\end{proof}

\begin{proof}[Proof of Theorem \ref{thm:GraphsAndStrata}]
  It is sufficient to consider the situation of Lemma
  \ref{lem:UnionOfChainsMapping}. To see this, note that any
  $\Gamma_0\in\Graphs_{g,n}(\Sym^d\P^r,\beta)$ may be decomposed
  into subgraphs of the form in the Lemma, together with single-vertex
  graphs, glued at marked points. There is a corresponding
  decomposition of $\Psi^{-1}(\Gamma_0)$ as a product (up to a
  finite morphism), and this decomposition extends to the closure (see
  \cite{AbramovichGraberVistoli2008}, Section 5.2, or \cite{Liu2013},
  Section 9.2). Thus we may treat each factor of the product
  separately.
  
  First, we
  show $$\bar{\Psi^{-1}(\Gamma_0)}\subseteq\bigcup_{\Gamma\ge\Gamma_0}\Psi^{-1}(\Gamma).$$
  Let
  $(f:C\to\P^r)\in\bar{\Psi^{-1}(\Gamma_0)}.$ By 
  Lemma \ref{lem:UnionOfChainsMapping}, we conclude:
  \begin{itemize}
  \item $\Psi(f:C\to\Sym^d\P^r)$ is a chain.
  \item The degree ratios $q(e)$ are equal for all edges $e$.
  \item The partitions $\Mon(e)$ are equal for all edges $e$.
  \item For any edge $e=(v,v'),$ where $v$ and $\Mark(1)$ are on the
    same connected component of $\Gamma\setminus\{e\}$, we have
    $i^{\mov}(v,e)=i^{\mov}(v_1,e_{12})$ and
    $i^{\mov}(v',e)=i^{\mov}(v_2,e_{12}).$ (This follows from the
    proof of Lemma \ref{lem:UnionOfChainsMapping}.
  \end{itemize}
  Thus any pair of adjacent edges in
  $\Psi(f:C\to\Sym^d\P^r)$ is combinable. Combining them all yields
  $\Gamma_0,$ i.e. $\Gamma_0\le\Psi(f:C\to\Sym^d\P^r)$.

  \medskip

  For the reverse inclusion, first suppose
  ${\Gamma}\ge{\Gamma}_0$ has a single pair of combinable
  edges, i.e. 
    \begin{center}
      \begin{tikzpicture}
        \draw (1,0.1) node {$\Gamma=$}; \draw (2,0) node
        {$\bullet$} node[left] {$v_1$}; \draw (2,0)--(4,0); \draw
        (2.5,0) node[above] {$e_{1}$}; \draw (3,0) node {$\bullet$}
        node[above] {$v$}; \draw (3.5,0) node[above] {$e_2$}; \draw
        (4,0) node {$\bullet$} node[right] {$v_2$.};
      \end{tikzpicture}
    \end{center}
 Fix $(f:C\to\Sym^d\P^r)\in\Psi^{-1}(\Gamma).$ We will
  construct a family $f:\mathcal{C}\to\Sym^d\P^r$ over $\C$ whose
  restriction to $0\in\C$ is the map $f:C\to\Sym^d\P^r$.
  
  By Lemma \ref{lem:IrreducibleFixedMap} and by representability of
  $f:C\to\Sym^d\P^r$, the orbifold points and nodes of $C$ have order
  $\lcm(\Mon(e_1))=\lcm(\Mon(e_2)).$ Thus $C$ is isomorphic to
  $V(xy)\subseteq[\P^2/\mu_{\lcm(\Mon(e_1))}],$ where $\P^2$ has
  coordinates $x,y,z,$ and $\lcm(\Mon(e_1))$ acts by multiplication by
  inverse roots of unity on the first two coordinates. Define
  $\mathcal{C}$ so that $\mathcal{C}_t=V(xy-tz^2)$ for $t\in\C.$
  Precisely, $\mathcal{C}$ is an open subset of
  $\left[\Bl_{[1:0:0],[0:1:0]}\P^2/\mu_{\lcm(\Mon(e_1))}\right]$.

  For $\eta\in\Mon(e_1)$ a part, there is an \'etale quotient map
  $\tilde\rho:[\P^2/\mu_\eta]\to[\P^2/\mu_{\lcm(\Mon(e_1))}]$. As above,
  define $(\mathcal{C}_\eta')_t=V(xy-tz^2)\subseteq[\P^2/\mu_\eta].$

  We must now define a map $\tilde f':\mathcal{C}_\eta'\to\P^r$ for
  each $\eta\in\Mon(e_1).$ As $\P^r$ is a variety, it is enough to
  define this on coarse moduli spaces. We choose isomorphisms of the
  fibers $(\mathcal{C}_\eta')_0$ and $\mathcal{C}_0$ with $C_\eta'$
  and $C$ respectively, such that the maps $\tilde\rho$ and $\rho$ are
  identified. Then $f'$ defines a map $\tilde
  f'_0:(\mathcal{C}_\eta')_0\to L_{e_1}=L_{e_2}$. (The case where
  $C_\eta'$ is contracted is trivial, so we assume it is not
  contracted.) By Lemma \ref{lem:UnionOfChainsMapping}, after
  equivariantly identifying $L_{e_1}\cong\P^1,$ $\tilde f'_0$
  is given (without loss of generality, on coarse moduli spaces) by
  \begin{align*}
    [x:0:z]&\mapsto[0:1]\\
    [0:y:z]&\mapsto[y^{\beta_\eta(e_1)}:z^{\beta_\eta(e_1)}].
  \end{align*}
  It remains to extend this to a map $\tilde f':\mathcal{C}_\eta'\to
  L_{e_1}$ that is fixed with respect to the $T$-action,
  i.e. fully ramified over the endpoints of $L_{e_1}$. We
  observe that the rational map
  \begin{align*}
    [x:y:z]\mapsto[y^{\beta_\eta(e_1)}:z^{\beta_\eta(e_1)}]
  \end{align*}
  is regular after blowing up the point $[1:0:0]$. This defines a map
  $\tilde f'$ as desired. Doing this for all $\eta$ shows that
  $f:C\to\Sym^d\P^r$ is in $\bar{\Psi^{-1}(\Gamma_0)}$.

  If ${\Gamma}$ has more than one pair of combinable edges, we
  apply this argument repeatedly.
\end{proof}
\begin{cor}\label{cor:GraphOpenClosedSubstack}
  $(\Mbar_{g,n}(\Sym^d\P^r,\beta))^T$ is a disjoint union of open and
  closed substacks $\bar{\Psi^{-1}(\Gamma)}$, for
  $\Gamma\in\Graphs_{g,n}^{\min}(\Sym^d\P^r,\beta)$. We define
  $\Mbar_{{\Gamma}}:=\bar{\Psi^{-1}(\Gamma)}$.
\end{cor}
\subsection{Explicit description of \texorpdfstring{$\Mbar_{{\Gamma}}$}{M\_{}Gamma}}\label{sec:FixedLocusLosevManin}
The rest of this section proves the following:
\begin{thm}\label{thm:FixedLocusLosevManin}
  For a stable vertex $v$ or edge $e=(v_1,v_2)$ of a minimal decorated
  graph
  $\Gamma=(\Gamma,\Mark,\{g_v\},\VEval,q,\Mon)\in\Graphs_{g,n}^{\min}(\Sym^d\P^r,\beta)$,
  we define
  \begin{align*}
    \Mbar_v:&=\Mbar_{g_v,\vec{\Mon}(v)}(BS_{\VEval(v)},0)\\
    &\\
    \Mbar_e:&=\left[\Mbar^{\lcm(\Mon(e))}_{v_1|{\mov(e)}|v_2}\Big/\left(\prod_{\eta\in\Mov(e)}\mu_{\beta_\eta(e)}\wr S_e\right)\right],
  \end{align*}
  where:
  \begin{itemize}
  \item $\vec{\Mon}(v)$ is the list of multipartitions
    $\{\Mon(i)\}_{i\in\Mark^{-1}(v)}\cup\{\Mon(v,e)\}_{e\in
      E(\Gamma,v)}$,
  \item $\Mbar^{\lcm(\Mon(e))}_{v_1|{\mov(e)}|v_2}$ is the order
    $\lcm(\Mon(e))$ orbifold Losev-Manin space with ${\mov(e)}$
    marked points $b_1,\ldots,b_{{\mov(e)}}$ and labeling set
    $\{v_1,v_2\}$, from Section \ref{sec:LosevManin},
  \item $S_e$ is the group $C_{\Stat}(e)\times S_{\Mov(e)},$ where
    $C_{\Stat}(e)$ is the centralizer of any element of the conjugacy
    class $\Stat(e)$ in $\prod_{i=0}^rS_{\abs{\Stat(e)_i}}$, and acts
    trivially on the Losev-Manin space,
  \item A generator of $\mu_{\beta_\eta(e)}$ acts by translating the
    marked point $b_\eta$ by $e^{2\pi i/q(e)},$ and
  \item $\wr$ denotes the wreath product.
  \end{itemize}
  Then the substack $\Mbar_{\Gamma}$ associated to
  $\Gamma$ is isomorphic to a $\left(\prod\limits_{\text{$(v,e)$ steady}}\bar{C}_{\VEval(v)}(\Mon(v,e))\right)$-gerbe over
  \begin{align}\label{eq:ProductFixedLocus}
    \left[\left(\prod_{v\in V^S(\Gamma)}\Mbar_v\times\prod_{e\in
          E(\Gamma)}\Mbar_e\right)\Big/\Aut(\Gamma)\right],
  \end{align}
  where $\bar{C}_{\VEval(v)}(\Mon(v,e))$ is the centralizer in
  $G_{\VEval(v)}$ of any element of the conjugacy class $\Mon(v,e)$,
  modulo the subgroup generated by that element.
\end{thm}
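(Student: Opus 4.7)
The plan is to use the standard decomposition of torus-fixed stable maps according to the decorated tree, as in Abramovich-Graber-Vistoli \cite{AbramovichGraberVistoli2008} Section 5.2 and Liu \cite{Liu2013} Section 9.2. Given $(f:C\to\Sym^d\P^r)\in\Mbar_{\tilde\Gamma}$, the analysis of Lemma \ref{lem:IrreducibleFixedMap} and Theorem \ref{thm:TreesAndStrata} lets us cut $C$ along the flag points $\xi(v,e)$ into vertex pieces $C_v$ (for stable $v$) and edge pieces $C_e$ (nodal chains of rational curves). This decomposition extends across families, giving a natural morphism from the product in \eqref{eq:ProductFixedLocus} (before taking the $\Aut(\Gamma)$ quotient) to $\Mbar_{\tilde\Gamma}$, and the content of the theorem is that this morphism realizes $\Mbar_{\tilde\Gamma}$ as the claimed quotient. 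The verification splits into identifying the factors and tracking the automorphism groups.

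For the vertex factor: for $v\in V^S(\Gamma)$, $f|_{C_v}$ is constant to the $T$-fixed point indexed by $\VEval(v)$, whose isotropy is $S_{\VEval(v)}=\prod_i S_{\VEval_i(v)}$. Thus $f|_{C_v}$ is a genus-zero stable map to $BS_{\VEval(v)}$ with marked points from $\Mark^{-1}(v)$ and flag points from $F(\Gamma,v)$, whose monodromies are prescribed by $\vec\Mon(v)$. This moduli problem is exactly $\Mbar_v=\Mbar_{0,\vec\Mon(v)}(BS_{\VEval(v)},0)$, with no further choices.

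For the edge factor: by Lemma \ref{lem:UnionOfChainsMapping}, $C_e$ is a chain of rational curves, each node and each of the two endpoints carrying orbifold structure of order $\lcm(\Mon(e))$ (from representability of the cover $\rho$). For each $\eta\in\Mov(e)$ the cover component $C_\eta'$ contains a unique non-contracted rational component $D_\eta$, which maps to $L_e$ as a fully ramified degree $\beta_\eta(e)=q(e)\eta$ cover, while the components in $\Stat(e)$ give étale covers that are $f'$-contracted to endpoints of $L_e$. Up to choosing the image of each $\rho(D_\eta)$ in the chain $C_e$, the map $f|_{C_e}$ is then determined (as in the explicit construction at the end of the proof of Theorem \ref{thm:TreesAndStrata}). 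The combinatorial/moduli data of the chain $C_e$ together with the $\mov(e)$ unordered positions $\rho(D_\eta)$ is precisely a point of the order-$\lcm(\Mon(e))$ Losev-Manin space $\Mbar^{\lcm(\Mon(e))}_{v_1|\mov(e)|v_2}$ (Losev-Manin stability — every chain component contains a marked point — holds because otherwise a component of $C_e$ would be contracted by $f$ and destabilize the map).

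The main obstacle, and the careful bookkeeping step, is identifying the automorphism group $\prod_{\eta\in\Mov(e)}\mu_{\beta_\eta(e)}\wr S_e$. The cyclic factor $\mu_{\beta_\eta(e)}$ is the automorphism group of the fully ramified degree-$\beta_\eta(e)$ cover $D_\eta\to L_e$, acting on the source by multiplication by $\beta_\eta(e)$-th roots of unity; under the equivariant $\P^1$-parametrization fixed by the $T$-action on $L_e$, this rotation translates the image $\rho(D_\eta)$ within its chain component, matching the stated action on $b_\eta$. The factor $S_{\Mov(e)}$ permutes moving parts sharing the same $\beta_\eta$, and $C_{\Stat}(e)$ is the automorphism group of the "stationary" portion of the cover, acting fiberwise along $C_e$ exactly as $S_{\VEval(v)}$ does at vertices. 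The wreath product arises because $S_e$ acts on the indexing set $\Mov(e)$ of the $\mu_{\beta_\eta(e)}$ factors. Finally, quotienting the whole product by $\Aut(\Gamma)$ accounts for automorphisms that relabel the tree itself. Together these reproduce the fiber of $\Psi$ described in Lemma \ref{lem:FixedMapDeterminesTree}, and the isomorphism \eqref{eq:ProductFixedLocus} follows on geometric points; its extension to stacks comes from the standard deformation-theoretic gluing argument for nodal stable maps at fixed flag monodromies.
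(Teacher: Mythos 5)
Your overall plan---cut along the decorated tree using the Abramovich--Graber--Vistoli/Liu gluing, identify the vertex pieces with $\Mbar_{0,\vec{\Mon}(v)}(BS_{\VEval(v)},0)$, and identify each edge piece with a quotient of an orbifold Losev--Manin space---is the same as the paper's, and your vertex discussion and stability remark are fine. The gap is in the edge identification, which is the actual content of the theorem. Your key assertion is that ``the moduli data of the chain $C_e$ together with the $\mov(e)$ unordered positions $\rho(D_\eta)$ is precisely a point of $\Mbar^{\lcm(\Mon(e))}_{v_1|\mov(e)|v_2}$,'' but $\rho(D_\eta)$ is an entire irreducible component of $C_e$, not a point, so as stated this datum is discrete once the chain is fixed; over the open stratum (where $C_e$ is irreducible, the generic situation for a minimal tree) it would produce a zero-dimensional space, whereas both sides must have dimension $\mov(e)-1$. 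What is missing is the device that converts the continuous modulus of each fully ramified cover $D_\eta\to L_e$ into an actual marked point on the chain: fix a non-endpoint reference point of $L_e$ (the paper uses the midpoint $P_e$), observe that $(f'|_{D_\eta})^{-1}(P_e)$ consists of $\beta_\eta(e)$ points forming a single $\mu_{\beta_\eta(e)}$-orbit, and mark the $\rho$-image of a chosen one of them. Without this the map to the Losev--Manin space is simply not defined, and the stated $\mu_{\beta_\eta(e)}$-action ``translating $b_\eta$'' has no referent; with it, the cyclic factor is seen to be exactly the ambiguity in the choice of preimage (rather than literally the deck group acting as automorphisms of the stable map), and the labeling ambiguities of components give $S_e$.

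Relatedly, your closing step (``the isomorphism follows on geometric points; its extension to stacks comes from the standard deformation-theoretic gluing argument'') understates what must be done: since both sides are positive-dimensional stacks, one needs morphisms defined in families in both directions. The paper achieves this by passing to the principal $\bigl(\prod_{\eta\in\Mov(e)}\mu_{\beta_\eta(e)}\wr S_e\bigr)$-cover of $\Mbar_{\tilde\Gamma_e}$ on which components and distinguished preimages of $P_e$ can be labeled, checking the resulting Losev--Manin curve construction works in families and is equivariant, and then writing down an explicit inverse $\Theta$: build $C'\to C$ with the prescribed covering degrees, choose preimages $b_\eta'$ of the marked points, and define $f'$ to contract the stationary components and to map each moving component to $L_e$ with degree $\beta_\eta(e)$, $b_\eta'\mapsto P_e$, and the endpoints fully ramified over $P_{i^{\mov}(v_1,e)}$, $P_{i^{\mov}(v_2,e)}$. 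Your sentence ``up to choosing the image of each $\rho(D_\eta)$ the map is determined'' is essentially this bijectivity claim, but it is asserted rather than proved; supplying the midpoint-preimage construction and the explicit inverse in families is what closes the gap.
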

\begin{proof}[Proof of \ref{thm:FixedLocusLosevManin}]
  Using Theorem \ref{thm:GraphsAndStrata}, Lemma
  \ref{lem:UnionOfChainsMapping}, and the gluing morphisms for
  $\Mbar_{g,n}(X,\beta)$ (see \cite{AbramovichGraberVistoli2008},
  Section 5.2), $\Mbar_{{\Gamma}}$ is a
  $\left(\prod\limits_{\text{$(v,e)$
        steady}}\bar{C}_{\VEval(v)}(\Mon(v,e))\right)$-gerbe over
  \begin{align*}
    \left[\left(\prod_{v\in
    V(\Gamma)}\Mbar_{g_v,\vec{\Mon}(v)}(BS_{\VEval(v)},0)\times\prod_{e\in
    E(\Gamma)}\Mbar_{{\Gamma}_e}
    \right)\Big/\Aut(\Gamma)\right],
  \end{align*}
  where \raisebox{-5pt}{
    \begin{tikzpicture}
      \draw (1,0.1) node {$\Gamma_e=$};
      \draw (2,0) node {$\bullet$} node[left] {$v_1$};
      \draw (2,0)--(3,0);
      \draw (2.5,0) node[above] {$e$};
      \draw (3,0) node {$\bullet$} node[right] {$v_2$,};
    \end{tikzpicture}
  } and the decorations are inherited from ${\Gamma},$ with
  $g_{v_1}=g_{v_2}=0.$ (Note that the two vertices of
  ${\Gamma}_e$ are labeled, i.e. $\Aut(\Gamma_e)=1$.)

  (The gerbe structure appears because gluing morphisms are fibered
  over the rigidified inertia stack $\bar{I}\Sym^d\P^r$, see
  \cite{AbramovichGraberVistoli2008} or \cite{Liu2013}. The group
  $\bar{C}_{\VEval(v)}(\Mon(v,e))$ is the isotropy group of
  $\bar{I}\Sym^d\P^r$ at the point of $\bar{I}\Sym^d\P^r$
  corresponding to $\Mon(v,e)$.)

  We need to show that, for all $e=(v_1,v_2)\in E(\Gamma),$ we have
  \begin{align*}
    \Mbar_{{\Gamma}_e}
    \cong\left[\Mbar^{\lcm(\Mon(e))}_{v_1|{\mov(e)}|v_2}\Big/\left(\prod_{\eta\in\Mov(e)}\mu_{\beta_\eta(e)}\wr
        S_e\right)\right].
  \end{align*}  
   Write
  $P_{e}:=P_{(i^{\mov}(v_1,e),i^{\mov}(v_2,e))}$ for the midpoint of
  $L_e$. For $(f:C\to\P^r)\in\Mbar_{\Gamma_e}$, consider the
  preimage of $P_e$ under the associated map $f':C'\to\P^r$. By Lemma
  \ref{lem:UnionOfChainsMapping}, $C'$ is a union of connected
  components $C_\eta'$ for $\eta\in\Mon(e)$, and if $\eta\in\Mov(e)$
  then the preimage of $P_e$ on $C_\eta'$ consists of $\beta_\eta(e)$
  points on the single noncontracted component of $C_\eta'$. These
  points are $\mu_{\beta_\eta(e)}$-translates of each other, under the
  natural action that fixes the two special points.

  After a principal
  $\left(\prod_{\eta\in\Mov(e)}\mu_{\beta_\eta(e)}\wr S_e\right)$-cover
  $\widetilde\Mbar_{\Gamma_e}\to\Mbar_{\Gamma_e}$, we may
  fix a labeling of the connected components $C_{\eta}'$, and label a
  distinguished preimage of $P_e$ on $C_\eta'$ for
  $\eta\in\Mov(e)$. (The $S_e$-cover removes all automorphisms of
  stable maps induced by automorphisms of the image curve that commute
  with the monodromy at $b_{v_1}$ and $b_{v_2}$.) Remembering the
  images of these distinguished points under $\rho$ yields a nodal
  chain of rational curves with ${\mov(e)}$ labeled marked points,
  none of which coincides with $b_{v_1}$ or $b_{v_2}$. The stability
  condition for $\Mbar_{0,\{\Mon(e),\Mon(e)\}}(L_e,\beta(e))$ implies
  that this is a Losev-Manin curve, with orbifold points of order
  $\lcm(\Mon(e))$ at marked points and nodes. This construction works
  in families, so it defines a map
  $\widetilde\Mbar_{\Gamma_e}\to\Mbar^{\lcm(\Mon(e))}_{v_1|{\mov(e)}|v_2}$,
  which is equivariant by definition with respect to the action of
  $\prod_{\eta\in\Mov(e)}\mu_{\beta_\eta(e)}\wr S_e$. This gives a map
  \begin{align*}
    \Phi:\Mbar_{\Gamma_e}\to\left[\Mbar^{\lcm(\Mon(e))}_{v_1|{\mov(e)}|v_2}\Big/\left(\prod_{\eta\in\Mov(e)}\mu_{\beta_\eta(e)}\wr
        S_e\right)\right].
  \end{align*}

  \medskip

  We now construct an inverse to this map. Let
  $(C,b_{v_1},b_1,\ldots,b_{{\mov(e)}},b_{v_2})\in\Mbar^{\lcm(\Mon(e))}_{v_1|{\mov(e)}|v_2}$
  be a Losev-Manin curve whose points are indexed by the multiset
  $\Mov(e)$. Fix a curve $C'=\bigsqcup_{\eta\in\Mon(e)}C_\eta'$ with
  \'etale maps $\rho_\eta:C_\eta'\to C$ of degree $\eta.$ This may be
  done uniquely up to isomorphism. Also, uniquely up to isomorphism
  (of $C'$ commuting with $\rho:C'\to C$), for each
  $\eta\in\Mov(e)\subseteq\Mon(e)$ we may choose a preimage point
  $b_\eta'\in C_\eta'$ of the corresponding marked point $b_\eta\in
  C$. Finally, there is a unique map $f':C'\to\P^r$ that sends:
  \begin{itemize}
  \item $C_\eta'$ to a $T$-fixed point, for $\eta\not\in\Mov(e)$,
  \item $C_\eta'$ to $L_{e}$ with degree $\beta_\eta(e)$, with
    $b_\eta'$ mapping to $P_e$, $\rho^{-1}(b_{v_1})$ mapping to $P_{i^{\mov}(v_1,e)}$ and
    $\rho^{-1}(b_{v_2})$ mapping to $P_{i^{\mov}(v_2,e)}$, for $\eta\in\Mov(e)$.
  \end{itemize}
  Again, this works in families, and defines a map
  $\tilde\Theta:\Mbar^{\lcm(\Mon(e))}_{v_1|{\mov(e)}|v_2}\to\Mbar_{\Gamma_e},$
  which we claim is invariant under the action of
  $\prod_{\eta\in\Mov(e)}\mu_{\beta_\eta(e)}\wr S_e$. Indeed, acting by
  $e^{2\pi i/q(e)}$ on $b_\eta$ translates the preimage $b_\eta'$ by
  some power of $e^{2\pi i/\beta_\eta(e)}$, and commutes with $f'$. Thus
  $\tilde\Theta$ descends to a
  map $$\Theta:\left[\Mbar^{\lcm(\Mon(e))}_{v_1|{\mov(e)}|v_2}\Big/\left(\prod_{\eta\in\Mov(e)}\mu_{\beta_\eta(e)}\wr
      S_e\right)\right]\to\Mbar_{\Gamma_e},$$ which is by
  construction an inverse to $\Phi.$
\end{proof}
\begin{cor}\label{cor:UniversalCurve}
  The $\left(\prod_{\eta\in\Mov(e)}\mu_{\beta_\eta(e)}\wr S_e\right)$-action on
  $\Mbar_{v_1|{\mov(e)}|v_2}^{\lcm(\Mon(e))}$ extends to the
  universal curve, so we have a universal curve on $\Mbar_e$, and by
  gluing, a universal curve on the left side of
  (\ref{eq:ProductFixedLocus}). The isomorphism of
  \ref{thm:FixedLocusLosevManin} naturally identifies this with the
  universal curve on $\Mbar_{\Gamma}.$
\end{cor}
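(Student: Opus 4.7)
The plan is to observe that the isomorphism $\Theta$ constructed in the proof of Theorem \ref{thm:FixedLocusLosevManin} is functorial in the base, so the corollary reduces to identifying the universal curves on the two sides of \eqref{eq:ProductFixedLocus}. First I would check that the $(\prod_{\eta\in\Mov(e)}\mu_{\beta_\eta(e)}\wr S_e)$-action on $\Mbar^{\lcm(\Mon(e))}_{v_1|\mov(e)|v_2}$ lifts to the universal Losev-Manin curve. A generator of $\mu_{\beta_\eta(e)}$ translates $b_\eta$ on the base by $e^{2\pi i/q(e)}$; on each fiber this is realized by the corresponding rotation of the irreducible component carrying $b_\eta$, which is a genuine automorphism of the underlying orbifold curve because it fixes the nodes, $b_{v_1}$, $b_{v_2}$, and the orbifold structure of order $\lcm(\Mon(e))$. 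These fiberwise rotations patch into an automorphism of the universal curve covering the base action. The $S_{\Mov(e)}$-factor lifts similarly by permuting markings of equal label, while $C_{\Stat}(e)$ acts trivially. Equivariant descent then yields a universal curve on $\Mbar_e$; gluing at the marked-point sections of the universal curves on each $\Mbar_v$ and descending through $\Aut(\Gamma)$ produces a universal curve on the right-hand side of \eqref{eq:ProductFixedLocus}.

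Second I would identify this universal curve with the one on $\Mbar_{\tilde\Gamma}$ inherited from $\Mbar_{0,n}(\Sym^d\P^r,\beta)$. Since gluing is compatible with both constructions, it suffices to handle the single-edge case $\tilde\Gamma = \tilde\Gamma_e$. The inverse map $\Theta$ built in the proof of Theorem \ref{thm:FixedLocusLosevManin} takes a Losev-Manin curve $C$ to the stable map whose associated \'etale cover is the canonical $\rho\colon C' = \bigsqcup_{\eta\in\Mon(e)} C_\eta' \to C$, together with the canonical $f'\colon C' \to \P^r$. Because both $C'$ and $f'$ are uniquely determined up to canonical isomorphism by $C$, the construction works relative to an arbitrary base, producing a family of stable maps over $\Mbar^{\lcm(\Mon(e))}_{v_1|\mov(e)|v_2}$ whose source curve is the universal Losev-Manin curve itself. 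This family is invariant under the group action and descends to a family over $\Mbar_e$, whose source is precisely the universal curve constructed in the first step. Thus the isomorphism $\Phi$ of Theorem \ref{thm:FixedLocusLosevManin} intertwines the two universal curves.

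The main subtlety is verifying that the lift of the group action to the universal Losev-Manin curve genuinely commutes with the family projection, so that equivariant descent yields a well-defined universal curve on the stack quotient $\Mbar_e$. This is handled by the observation that each generator acts on every fiber as an automorphism of that fiber as an orbifold curve, and the automorphisms vary continuously in families because they are defined by the same formula on every fiber. The remaining compatibility of Chen-Ruan orbifold structures at glued nodes is automatic from the explicit \'etale cover $C' \to C$ underlying $\Theta$, which prescribes the monodromy $\Mon(v,e)$ at each node by construction.
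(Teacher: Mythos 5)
Your overall route is the same as the paper's: extend the group action to the universal Losev--Manin curve, descend and glue, and then observe that the construction of $\Theta$ (and $\Phi$) in the proof of Theorem \ref{thm:FixedLocusLosevManin} is already a construction in families, so it identifies the descended curve with the universal curve on $\Mbar_{\tilde\Gamma}$. The second half of your argument is fine. The gap is in the first step, in how you lift the action. The prescription ``rotate the irreducible component carrying $b_\eta$ and leave everything else fixed'' does not realize the base action and does not patch. On the generic fiber the curve is irreducible, so all light points $b_{\eta'}$ lie on the same component and your rotation moves every one of them, not just $b_\eta$; and at the boundary the prescription is not the limit of the maps on nearby smooth fibers (in a smoothing $xy=t$ of a node, the limit of the rotation $u\mapsto\zeta u$ acts by $\zeta$ on one branch and $\zeta^{-1}$ on the other, not by $\zeta$ on one branch and the identity on the other). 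So ``the same formula on every fiber'' is not available, and the asserted family automorphism does not exist as stated.

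Moreover, even a corrected rotation-type lift would be the wrong one for the identification you want. Translating $b_\eta$ by $e^{2\pi i/q(e)}$ does not change the underlying curve $C$ at all, and --- as in the proof of Theorem \ref{thm:FixedLocusLosevManin} --- it changes the associated stable map $(\rho\colon C'\to C,\ f'\colon C'\to\P^r)$ only by moving the distinguished preimage $b'_\eta$ by a power of $e^{2\pi i/\beta_\eta(e)}$, with $f'$ literally unchanged; the resulting isomorphism $\tilde\Theta(g\cdot x)\cong\tilde\Theta(x)$ is the identity on the source curve. Hence the $G$-linearization of the universal Losev--Manin curve that matches the universal curve on $\Mbar_{\tilde\Gamma_e}$ is the tautological one (identity on underlying curves, with only the auxiliary light sections relabeled), not a fiberwise rotation; with a rotation lift your ``first step'' curve and the family produced by $\Theta$ in your ``second step'' would carry different equivariant structures (e.g.\ at the cotangent lines along the sections $b_{v_1},b_{v_2}$), so the final identification would fail. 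Replacing your lift by this tautological one repairs the argument and is what the paper means by ``by definition of the action.''
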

\begin{proof}
  The first statement is by definition of the action, and the second
  is immediate from the proof of Theorem \ref{thm:FixedLocusLosevManin}.
\end{proof}
\begin{rem}\label{rem:ConnectedComponentsOfFixedLocus}
  Theorem \ref{thm:FixedLocusLosevManin} shows in particular that
  $\Mbar_{\Gamma_e}$ is irreducible, so connected components of
  $(\Mbar_{g,n}(\Sym^d\P^r,\beta))^T$ are indexed by minimal decorated
  graphs with the additional data of a connected component of
  $\Mbar_{g,\vec{\Mon}(v)}(BS_{\VEval(v)},0)$ for each $v$. (These
  connected components in turn can computed using elementary group
  theory.)
\end{rem}
\begin{notation}\label{notation:PsiClasses}
  For a special flag $(v,e)\in F(\Gamma),$ we denote by
  $\psi^{\Mbar_e}_v$ the $\psi$-class on $\Mbar_e$ at the point
  labeled by $v$. If $v\in V^S({\Gamma}),$ we denote by
  $\psi^{\Mbar_v}_{e}$ the $\psi$-class on $\Mbar_v$ at the marked
  point $\xi(v,e)$. We use the same notation for the
  $\bar{\psi}$-classes.
\end{notation}

\section{The virtual normal bundle and virtual fundamental class of
  \texorpdfstring{$\Mbar_{\Gamma}$}{M\_{}Gamma}}\label{sec:VirtualNormalBundle}
In this section we compute the Euler class of the virtual normal
bundle to $\Mbar_{\Gamma},$ and show that the virtual
fundamental class of $\Mbar_{\Gamma}$ is equal to its
fundamental class. Some of the arguments are ``classical,'' and we
will refer the reader to \cite{Liu2013} for these.

In this section we fix
$\Gamma\in\Graphs_{g,n}^{\min}(\Sym^d\P^r,\beta)$. Let
$\pi:\mathcal{C}\to\Mbar_{\Gamma}$ and
$\rho:\mathcal{C}'\to\mathcal{C}$ denote the universal curve and
universal \'etale cover, respectively:
\begin{center}
  \begin{tikzcd}
    \mathcal{C}'\arrow{r}{f'}\arrow{d}{\rho}&\P^r\\
    \mathcal{C}\arrow{r}{f}\arrow{d}{\pi}&\Sym^d\P^r\\
    \Mbar_{\Gamma}
  \end{tikzcd}
\end{center}

By a standard argument (see \cite{Liu2013}), we have an exact sequence
of $T$-equivariant sheaves on $\Mbar_{g,n+1}(\Sym^d\P^r,\beta)$ giving
the perfect obstruction theory\footnote{We will always use the
  notation in \eqref{eq:ObstructionSequence} for higher direct image
  sheaves, writing e.g. $R^i\pi_*(\mathcal{C},f^*T\Sym^d\P^r)$ instead
  of $R^i\pi_*f^*T\Sym^d\P^r$. This is because we will restrict $\pi$
  to various substacks of $\mathcal{C}$, and wish to avoid confusion.}
\begin{align}\label{eq:ObstructionSequence}
  0&\to\Aut(\mathcal{C})\to R^0\pi_*(\mathcal{C},f^*T\Sym^d\P^r)\to\Defm(\mathcal{C},f)\to\\
  &\to\Defm(\mathcal{C})\to
  R^1\pi_*(\mathcal{C},f^*T\Sym^d\P^r)\to\Obs(\mathcal{C},f)\to0,\nonumber
\end{align}
where $\Aut(\mathcal{C})$ (resp. $\Defm(\mathcal{C})$) is the sheaf on
$\Mbar_{g,n+1}(\Sym^d\P^r)$ of infinitesimal automorphisms
(resp. deformations) of the marked source curve $\mathcal{C}$. (See
\cite{Liu2013} for rigorous definitions.) For
$(f:C\to\Sym^d\P^r)\in\Mbar_{\Gamma},$ we also have a
normalization exact sequence computing the fibers of the middle terms:
\begin{align}\label{eq:NormalizationSequence}
  0&\to
  H^0(C,f^*T\Sym^d\P^r)\to\bigoplus_{\nu}H^0(C_\nu,f^*T\Sym^d\P^r)\to\bigoplus_\xi
  H^0(\xi,f^*T\Sym^d\P^r)\to\\
  &\to
  H^1(C,f^*T\Sym^d\P^r)\to\bigoplus_{\nu}H^1(C_\nu,f^*T\Sym^d\P^r)\to0,\nonumber
\end{align}
where $\nu$ runs over the set of irreducible components of $C$, and
$\xi$ runs over nodes of $C$. The sequences
\eqref{eq:ObstructionSequence} and \eqref{eq:NormalizationSequence}
each split as direct sums of two exact sequences: the $T$-fixed part
and the $T$-moving part. We use the notations
$\Aut(\mathcal{C})^{\fix}$ and $\Aut(\mathcal{C})^{\mov}$ (and
similar) to denote the $T$-fixed subsheaf or subspace and its
$T$-invariant complement. By definition (see
\cite{GraberPandharipande1999}), the \emph{Euler class of the virtual
  normal bundle} $e_T(N^{\vir}_{\Gamma})$ is
\begin{align}\label{eq:VirtualNormalBundle}
  \frac{e_T(\Defm(\mathcal{C},f)^{\mov})}{e_T(\Obs(\mathcal{C},f)^{\mov})}=\frac{e_T(\Defm(\mathcal{C})^{\mov})e_T(R^0\pi_*(\mathcal{C},f^*T\Sym^d\P^r)^{\mov})}{e_T(\Aut(\mathcal{C})^{\mov})e_T(R^1\pi_*(\mathcal{C},f^*T\Sym^d\P^r)^{\mov})}\in
  H^*_T(\Mbar_{\Gamma}),
\end{align}
and the \emph{virtual fundamental class}
$[\Mbar_{\Gamma}]^{\vir}$ of $\Mbar_{\Gamma}$ is
$e_T(\Obs(\mathcal{C},f)^{\fix}).$ We compute the various terms of
\eqref{eq:ObstructionSequence} and \eqref{eq:NormalizationSequence}
one by one. It is convenient to compute by pulling back to the
canonical $\Aut(\Gamma)$-cover $\Mbar^{\rig}_{\Gamma}$ of
$\Mbar_{\Gamma}$, so that the correspondence between $C$ and
$\Gamma$ is more concrete.

\medskip

\noindent \textbf{The sheaves $\Aut(\mathcal{C})$ and
  $\Defm(\mathcal{C})$.} In the toric case, from \cite{Liu2013} we have
\begin{align}\label{eq:AutCContribution}
  e_T(\Aut(\mathcal{C})^{\mov})=\prod_{v\in
    V^1({\Gamma})}e_T(T_{\xi(v,e_v)}C)=\prod_{v\in
    V^1({\Gamma})}\psi^{\Mbar_{e_v}}_v.
\end{align}
The same argument and answer apply here, using (Theorem
\ref{thm:GraphsAndStrata} and) the observation that combining edges
gives a natural identification of $V^1(\Gamma)$. Briefly, moving
automorphisms come from noncontracted components with only one special
point, and correspond to vector fields on such a component that are
nonvanishing at the nonspecial $T$-fixed point.

Similarly, in the toric case \cite{Liu2013} gives
\begin{align}\label{eq:DefCContribution}
  e_T(\Defm(\mathcal{C}))=\left(\prod_{\substack{v\in
        V^2(\Gamma)\\\text{$(v,e_v^1)$
          steady}}}(-\psi^{\Mbar_{e_v^1}}_v-\psi^{\Mbar_{e_v^2}}_v)\right)\left(\prod_{\substack{(v,e)\in
        F(\Gamma)\\v\in
        V^S({\Gamma})}}(-\psi^{\Mbar_v}_e-\psi^{\Mbar_e}_v)\right).
\end{align}
This is again correct in our case. The factors in
\eqref{eq:DefCContribution} come from smoothing nodes. (Classically,
the deformation space of a node is the tensor product of the tangent
spaces to the two branches.) Therefore the observation we need is that
the nodes that do not appear in \eqref{eq:DefCContribution} have
$T$-fixed deformation space. We will use the following notation.
\begin{Def}
  A node $\xi$ is called \emph{steady}\footnote{This is similar, but
    not identical, to the definition of a \emph{breaking node} from
    \cite{OkounkovPandharipande2010}.} if $T_\xi C_1\otimes T_\xi C_2$
  has a nontrivial torus action, where $C_1$ and $C_2$ are the
  branches of $\xi$.
\end{Def}
\begin{rem}
  Steady nodes are exactly those of the form $\xi(v,e)$ for $(v,e)$ a
  steady flag. By Theorem \ref{thm:GraphsAndStrata}, if
  $\Psi(f:C\to\Sym^d\P^r)=\Gamma$ (i.e. it is minimal), then all
  nodes of $C$ are steady nodes. Furthermore, the set of steady nodes
  is canonically identified for any two points of
  $\Mbar^{\rig}_{\Gamma}$.
\end{rem}
The factors in \eqref{eq:DefCContribution} are in correspondence with
steady nodes.

\medskip

\noindent \textbf{The bundles $R^0\pi_*(\mathcal{C},f^*T\Sym^d\P^r)$
  and $R^1\pi_*(\mathcal{C},f^*T\Sym^d\P^r)$.}  We use the sequence
\eqref{eq:NormalizationSequence}. The computation is similar to the
original one by Kontsevich \cite{Kontsevich1995} (and the orbifold
computations of Johnson \cite{Johnson2014} and Liu \cite{Liu2013}),
but requires some care due to the edge moduli spaces.

Note that normalization does not commute with base change, so
\eqref{eq:NormalizationSequence} cannot naively be applied to
commute $R^i\pi_*(\mathcal{C},f^*T\Sym^d\P^r)$. However, normalization
of steady nodes does commute with base change on
$\Mbar^{\rig}_{\Gamma}$, due to the canonical identification of
nodes above. Thus we have the sequence
\begin{align}\label{eq:NormalizationSteady}
  0&\to
  R^0\pi_*(\mathcal{C},f^*T\Sym^d\P^r)\to\bigoplus_{\underline{\nu}}R^0\pi_*(\mathcal{C}_{\underline{\nu}},f^*T\Sym^d\P^r)\to\bigoplus_\xi
  R^0\pi_*(\xi,f^*T\Sym^d\P^r)\to\\\nonumber
  &\to
  R^1\pi_*(\mathcal{C},f^*T\Sym^d\P^r)\to\bigoplus_{\underline{\nu}}R^1\pi_*(\mathcal{C}_{\underline{\nu}},f^*T\Sym^d\P^r)\to0,
\end{align}
where $\underline{\nu}$ runs over closures of maximal subcurves of $\mathcal{C}$
containing only non-steady nodes, and $\xi$ runs over steady
nodes. Observe that either
$\mathcal{C}_{\underline{\nu}}$ is contracted by $f$, or each fiber
$C_{\underline{\nu}}$ of $\mathcal{C}_{\underline{\nu}}$ contains only
noncontracted components.

By Section \ref{sec:SymmetricPowers}, we have
$$R^i\pi_*(\mathcal{C}_{\underline{\nu}},f^*T\Sym^d\P^r)=R^i\pi_*(\mathcal{C}_{\underline{\nu}},\rho_*(f')^*T\P^r)=R^i(\pi\circ\rho)_*(\mathcal{C_{\underline{\nu}}}',(f')^*T\P^r).$$
(The second equality follows from the fact that $\rho$ is \'etale,
hence $\rho_*$ is exact.) After an \'etale base change, we may
distinguish the connected components of fibers of
$\mathcal{C}_{\underline{\nu}}'\to\Mbar^{\rig}_{\Gamma}$. In
other words, we may write
\begin{align*}
  \mathcal{C}_{\underline{\nu}}'=\bigsqcup_\eta\mathcal{C}_{\underline{\nu},\eta}',
\end{align*}
where $\mathcal{C}_{\underline{\nu},\eta}'$ has connected fibers. Then
\begin{align}\label{eq:DefsSumOverEta}
  R^i\pi_*(\mathcal{C}_{\underline{\nu}}',(f')^*T\P^r)=\bigoplus_\eta
  R^i(\pi\circ\rho)_*(\mathcal{C}_{\underline{\nu},\eta}',(f')^*T\P^r).
\end{align}
If $\mathcal{C}_{\underline{\nu}}=\mathcal{C}_v$ is contracted, then
$(f')^*T\P^r$ is trivial on
$\mathcal{C}_{\underline{\nu},\eta}'$. Thus we have
\begin{align*}
  R^i(\pi\circ\rho)_*(\mathcal{C}_{\underline{\nu},\eta}',(f')^*T\P^r)\cong
  R^i(\pi\circ\rho)_*(\mathcal{C}_{\underline{\nu},\eta}',\O_{\mathcal{C}_{\underline{\nu},\eta}'})\otimes
  T_{P_{i(\eta)}}\P^r,
\end{align*}
where as $i(\eta)\in\{0,\ldots,r\}$ is the label of $\eta$,
i.e. $P_{i(\eta)}=f'(\mathcal{C}_{\underline{\nu},\eta}')$. In
particular,
\begin{align}\label{eq:ContractedFixedVanish}
  R^0\pi_*(\mathcal{C}_v,f^*T\Sym^d\P^r)^{\fix}=R^1\pi_*(\mathcal{C}_{\underline{\nu}},f^*T\Sym^d\P^r)^{\fix}=0.
\end{align}
The bundle $R^1\pi_*(\mathcal{C}_v,f^*T\Sym^d\P^r)^{\mov}$ is
nontrivial, and is isomorphic to a Hurwitz-Hodge bundle (see
\cite{Liu2013}, Section 7.5). However, note that
$e_T(R\pi_*(\mathcal{C}_v,f^*T\Sym^d\P^r))$ is the inverse of the
twisting class from \eqref{eq:TwistedConeDef}. We will use this fact
in Section \ref{sec:CharacterizationOfLagrangianCone} in our
characterization of $\mathcal{L}_{\Sym^d\P^r}$, and in Section
\ref{sec:MirrorTheorem} to apply the orbifold quantum Riemann-Roch
theorem.

Similarly for a steady node $\xi(v,e),$ we have
\begin{align}\label{eq:R0Nodes}
  R^0\pi_*(\xi(v,e),f^*T\Sym^d\P^r)^{\fix}&=0\nonumber\\
  R^0\pi_*(\xi(v,e),f^*T\Sym^d\P^r)^{\mov}&=T_{(\VEval(v),\Mon(v,e))}I\Sym^d\P^r=\bigoplus_{\eta\in\Mon(v,e)}T_{P_{i(\eta)}}\P^r.
\end{align}

Suppose $\mathcal{C}_{\underline{\nu}}$ is not contracted. The
components $\mathcal{C}_{\underline{\nu},\eta}'$ are in bijection with
$\Mon(e)$, where $e$ is the edge of $\Gamma$ corresponding to
$\mathcal{C}_{\underline{\nu}}$.) First, we argue that
$R^1(\pi\circ\rho)_*(\mathcal{C}_{\underline{\nu},\eta}',(f')^*T\P^r)$
vanishes for all $\eta$. 
The normalization exact sequence for a fiber
$C_{\underline{\nu},\eta}'$ reads:
\begin{align*}
  0&\to
  H^0(C_{\underline{\nu},\eta}',(f')^*T\P^r)\to\bigoplus_{\nu\in\underline{\nu}}H^0(C_{\nu,\eta}',(f')^*T\P^r)\to\bigoplus_\xi
  H^0(\xi,(f')^*T\P^r)\to\\
  &\to
  H^1(C_{\underline{\nu},\eta}',(f')^*T\P^r)\to\bigoplus_{\nu\in\underline{\nu}}H^1(C_{\nu,\eta}',(f')^*T\P^r)\to0,
\end{align*}
where we also denote by $\underline{\nu}$ the set indexing irreducible
components $C_\nu$ of $C_{\underline{\nu}}$ (equivalently, irreducible
components $C_{\nu,\eta}'$ of $C_{\underline{\nu},\eta}$). 
For each $\nu\in\underline{\nu},$ we have
\begin{align}\label{eq:NoncontractedFixedR1Vanish}
  H^1(C_{\nu},(f')^*T\P^r)=0
\end{align}
by convexity of $\P^r.$ We claim that the map
\begin{align*}
  \bigoplus_{\nu\in\underline{\nu}}H^0(C_{\nu,\eta}',(f')^*T\P^r)\to\bigoplus_\xi
  H^0(\xi,(f')^*T\P^r)
\end{align*}
is surjective, so that $H^1(C_{\underline{\nu},\eta}',(f')^*T\P^r)=0.$
(The map takes the difference of the sections on the two branches of a
node.) If $C_{\underline{\nu},\eta}'$ has a component
$C_{\nu_0,\eta}'$ not contracted by $f'$, there is at most one, by
Lemma \ref{lem:UnionOfChainsMapping}. On any other component
$C_{\nu,\eta}'$, we have $(f')^*T\P^r\cong\O_{C_{\nu,\eta}'}\otimes
T\P^r$, i.e. $H^0(C_{\nu,\eta}',\O_{C_{\nu,\eta}'}\otimes T\P^r)\cong
T\P^r$. Fix an arbitrary section $s\in
H^0(C_{\nu_0,\eta}',(f')^*T\P^r)$. Then ``working outward'' from
$C_{\nu_0,\theta}'$ shows that the map is surjective. The case where
$f'$ contracts $C_{\underline{\nu},\eta}'$ is similar and
simpler. 

Next, we compute
$R^0(\pi\circ\rho)_*(\mathcal{C}_{\underline{\nu},\eta}',(f')^*T\P^r).$
If $\mathcal{C}_{\underline{\nu},\eta}'$ is contracted, $(f')^*T\P^r$
is trivial and we have
\begin{align*}
  R^0(\pi\circ\rho)_*(\mathcal{C}_{\underline{\nu},\eta}',(f')^*T\P^r)\cong
  T\P^r\otimes\O_{\Mbar^{\rig}_{\Gamma}}
\end{align*}
by properness of $\pi\circ\rho.$ Suppose
$\mathcal{C}_{\underline{\nu},\eta}'$ is not contracted. Consider the
Stein factorization of $f'|_{\mathcal{C}_{\underline{\nu},\eta}'}$
relative to $\pi\circ\rho$:
\begin{center}
  \begin{tikzcd}
    \mathcal{C}_{\underline{\nu},\eta}'\arrow{r}{\sf}\arrow[bend left]{rr}{f'}\arrow[swap]{d}{\pi\circ\rho}
    &\overline{\mathcal{C}_{\underline{\nu},\eta}'}\arrow[swap]{r}{f''}
    \arrow{dl}{\bar{\pi\circ\rho}}&\P^r\\
    \Mbar^{\rig}_{\Gamma}
  \end{tikzcd}
\end{center}
If $(f:C\to\Sym^d\P^r)$ is in the dense open substack
$\Psi^{-1}(\Gamma)\subseteq\Mbar^{\rig}_{\Gamma}$,
then $C_{\underline{\nu}}$ is irreducible, hence so is
$C_{\underline{\nu},\eta}'$. This, with the fact that
$\mathcal{C}_{\underline{\nu},\eta}'$ is not contracted, implies that
$\sf$ is birational. By the projection formula for coherent sheaves,
\begin{align*}
  (\pi\circ\rho)_*(f')^*T\P^r&=(\pi\circ\rho)_*\sf^*(f'')^*T\P^r\\
  &=(\bar{\pi\circ\rho})_*\sf_*\sf^*(f'')^*T\P^r\\
  &=(\bar{\pi\circ\rho})_*((f'')^*T\P^r\otimes\sf_*\O_{C_{\underline{\nu},\eta}'})\\
  &=(\bar{\pi\circ\rho})_*(f'')^*T\P^r.
\end{align*}
After an \'etale base change on $\Mbar^{\rig}_{\Gamma}$, the map
$f''$ trivializes
$\overline{\mathcal{C}_{\underline{\nu},\eta}'}$. Thus
$R^0(\bar{\pi\circ\rho})_*(\overline{\mathcal{C}_{\underline{\nu},\eta}'},(f'')^*T\P^r)$
is a trivial vector bundle. Calculation of the $T$-weights of this
vector bundle is identical to Kontsevich's calculation in Section
3.3.4 of \cite{Kontsevich1995}, which uses the Euler sequence on
$\P^r$. The weights are
\begin{align}\label{eq:TWeights}
  \frac{A}{\beta_\eta(e)}\alpha_{i^{\mov}(v_1,e)}+\frac{B}{\beta_\eta(e)}\alpha_{i^{\mov}(v_2,e)}-\alpha_i,
\end{align}
where $0\le A,B\le\beta_\eta(e)$, $A+B=\beta_\eta(e)$, and
$i\in\{0,\ldots,r\}$. Note that this is zero exactly when $A=0$ and
$i=i^{\mov}(v_2,e)$, or $B=0$ and $i=i^{\mov}(v_1,e)$. (These factors
contribute to
$e_T(R^0(\bar{\pi\circ\rho})_*(\overline{\mathcal{C}_{\underline{\nu},\eta}'},(f'')^*T\P^r)^{\fix}).$)
Putting together \eqref{eq:R0Nodes} and \eqref{eq:TWeights}, for
$\underline\nu$ noncontracted, the Euler class
$e_T(R^0(\bar{\pi\circ\rho})_*(\overline{\mathcal{C}_{\underline{\nu}}'},(f'')^*T\P^r)^{\mov})$
is equal to
\begin{align}\label{eq:NoncontractedR0}
  \left(\prod_{\eta\in\Stat(e)}\prod_{i\ne
      i(\eta)}(\alpha_{i(\eta)}-\alpha_i)\right)\prod_{\eta\in\Mov(e)}\prod_{\substack{A+B=\beta_\eta(e)\\0\le
        i\le
        r\\(A,i)\ne(0,i^{\mov}(v_2,e))\\(B,i)\ne(0,i^{\mov}(v_1,e))}}\left(\frac{A}{\beta_\eta(e)}\alpha_{i^{\mov}(v_1,e)}+\frac{B}{\beta_\eta(e)}\alpha_{i^{\mov}(v_2,e)}-\alpha_i\right).
\end{align}

\noindent\textbf{Summary.} 
We collect the arguments of this section in the following two
statements.
\begin{prop}\label{prop:VirtualFundamentalClass}
  For any minimal decorated graph $\Gamma$,
  $\Mbar_{\Gamma}$ is smooth, and the virtual fundamental class
  is equal to the fundamental class.
\end{prop}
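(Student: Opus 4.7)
The plan is to prove the two assertions separately: smoothness follows from Theorem \ref{thm:FixedLocusLosevManin}, and the virtual-class equality follows from showing that the $T$-fixed part of the obstruction sheaf vanishes, using the computations already developed in this section.

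For smoothness, I would invoke Theorem \ref{thm:FixedLocusLosevManin} together with Remark \ref{rem:GluingAutomorphisms}, which exhibit $\Mbar_{\tilde\Gamma}$ as a gerbe over a finite-group quotient of
\[
\prod_{v\in V^S(\Gamma)}\Mbar_v\times\prod_{e\in E(\Gamma)}\Mbar_e.
\]
Each $\Mbar_v$ is a moduli space of genus-zero twisted stable maps to $BS_{\VEval(v)}$ with prescribed monodromy, and is smooth. Each $\Mbar_e$ is a finite-group quotient of an orbifold Losev-Manin space, which is smooth by Theorem \ref{thm:LosevManin}. Since finite-group quotients and gerbes preserve smoothness of DM stacks, $\Mbar_{\tilde\Gamma}$ is smooth.

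For the equality $[\Mbar_{\tilde\Gamma}]^{\vir}=[\Mbar_{\tilde\Gamma}]$, I would appeal to the standard criterion: a smooth stack equipped with a perfect obstruction theory whose obstruction sheaf vanishes has virtual class equal to its fundamental class. This reduces the task to showing $\Obs(\mathcal{C},f)^{\fix}=0$. Taking $T$-fixed parts of \eqref{eq:ObstructionSequence}---an exact operation since $T$ is a torus---exhibits $\Obs^{\fix}$ as a quotient of $R^1\pi_*(\mathcal{C},f^*T\Sym^d\P^r)^{\fix}$, so it suffices to prove vanishing of the latter. Here I would exploit minimality of $\tilde\Gamma$: every node of every fiber of $\mathcal{C}\to\Mbar_{\tilde\Gamma}$ is then steady, so normalization commutes with base change and \eqref{eq:NormalizationSteady} is an honest exact sequence of bundles. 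Combined with the vanishing \eqref{eq:R0Nodes} of the steady-node $R^0$ fixed part, the long exact sequence yields an isomorphism $R^1\pi_*(\mathcal{C},\cdot)^{\fix}\cong\bigoplus_{\underline\nu}R^1\pi_*(\mathcal{C}_{\underline\nu},\cdot)^{\fix}$. For contracted $\mathcal{C}_v$, the summand vanishes by \eqref{eq:ContractedFixedVanish}. For noncontracted $\mathcal{C}_{\underline\nu}$, it vanishes in full (not just the fixed part) by convexity of $\P^r$ applied to each irreducible component via \eqref{eq:NoncontractedFixedR1Vanish}, together with the surjectivity of the node restriction map that I would verify by ``working outward'' from the unique noncontracted component, as in the discussion following \eqref{eq:NoncontractedFixedR1Vanish}.

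The main bookkeeping obstacle is precisely the base-change issue for the normalization sequence, which is resolved once and for all by the observation that minimality of $\tilde\Gamma$ forces every node to be steady. Once $\Obs^{\fix}=0$ is established, no separate dimension check is needed: smoothness of $\Mbar_{\tilde\Gamma}$ together with the collapse of the induced POT to a complex concentrated in degree zero automatically yields $[\Mbar_{\tilde\Gamma}]^{\vir}=[\Mbar_{\tilde\Gamma}]$.
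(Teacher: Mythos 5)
Your proposal is correct and follows essentially the same route as the paper: smoothness from Theorem \ref{thm:FixedLocusLosevManin}, then vanishing of the fixed part of $R^1\pi_*(\mathcal{C},f^*T\Sym^d\P^r)$ via the steady-node normalization sequence \eqref{eq:NormalizationSteady} together with \eqref{eq:R0Nodes}, \eqref{eq:ContractedFixedVanish}, and \eqref{eq:NoncontractedFixedR1Vanish}, which forces $\Obs(\mathcal{C},f)^{\fix}=0$ and hence $[\Mbar_{\tilde\Gamma}]^{\vir}=[\Mbar_{\tilde\Gamma}]$. One small correction: your assertion that minimality makes \emph{every} node of \emph{every} fiber steady is false on the boundary of $\Mbar_{\tilde\Gamma}$, where degenerations inside the edge moduli produce non-steady nodes (indeed your own ``working outward'' step on the chains $C_{\underline{\nu},\eta}'$ presupposes them); what makes \eqref{eq:NormalizationSteady} legitimate is rather that the steady nodes are canonically identified across the family over $\Mbar^{\rig}_{\tilde\Gamma}$, so normalization at those nodes alone commutes with base change.
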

\begin{prop}\label{prop:VirtualNormalBundle}
  The equivariant Euler class $e_T(N^{\vir}_{\Mbar_{\Gamma}})$
  of the virtual normal bundle to $\Mbar_{\Gamma}$ is
  \begin{align*}
    &\left(\frac{\prod_{v\in
          V^2(\Gamma)}(-\psi^{\Mbar_{e_v^1}}_v-\psi^{\Mbar_{e_v^2}}_v)\prod_{\substack{(v,e)\in
            F(\Gamma)\\v\in
            V^S({\Gamma})}}(-\psi^{\Mbar_v}_e-\psi^{\Mbar_e}_v)}{\prod_{v\in
          V^1({\Gamma})}\psi^{\Mbar_{e_v}}_v}\right)\\
    &\quad\cdot\prod_{e\in
      E(\Gamma)}\left(\rule{0cm}{1.7cm}\right.\left(\rule{0cm}{.75cm}\right.\prod_{\substack{\eta\in\Stat(e)\\i\ne i(\eta)}}
    (\alpha_{i(\eta)}-\alpha_i)\left.\rule{0cm}{.75cm}\right)
    \prod_{\substack{\eta\in\Mov(e)\\A+B=\beta_\eta(e)\\0\le i\le r\\(A,i)\ne(0,i^{\mov}(v_2,e))\\(B,i)\ne(0,i^{\mov}(v_1,e))}}\left(\frac{A}{\beta_\eta(e)}\alpha_{i^{\mov}(v_1,e)}+\frac{B}{\beta_\eta(e)}\alpha_{i^{\mov}(v_2,e)}-\alpha_i\right)\left.\rule{0cm}{1.7cm}\right)\\
    &\quad\quad\cdot\left(\frac{\prod_{v\in V^1(\Gamma)\cup V^{1,1}(\Gamma)\cup
      V^2(\Gamma)}e_T(T_{(\VEval(v),\Mon(v))}I\Sym^d\P^r)}{\prod_{(v,e)\in
      F(\Gamma)}e_T(T_{(\VEval(v),\Mon(v,e))}I\Sym^d\P^r)}\right)\\
    &\quad\quad\quad\cdot\left(\prod_{v\in
      V^S(\Gamma)}e_T(R\pi_*(C_v,f^*T\Sym^d\P^r))^{\mov}\right).
  \end{align*}
\end{prop}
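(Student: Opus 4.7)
The plan is to substitute the ingredients assembled earlier in the section into the definition \eqref{eq:VirtualNormalBundle}, and then verify a combinatorial cancellation. There are four pieces to combine: the $\Aut$/$\Defm$ contributions \eqref{eq:AutCContribution}--\eqref{eq:DefCContribution}, the noncontracted-edge contribution \eqref{eq:NoncontractedR0}, the steady-node contribution \eqref{eq:R0Nodes}, and the stable-vertex contribution, which we keep in the packaged form $e_T(R\pi_*(\mathcal{C}_v,f^*T\Sym^d\P^r))$ as foreshadowed in the discussion preceding this section.

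First I would substitute \eqref{eq:AutCContribution} and \eqref{eq:DefCContribution} into the ratio $e_T(\Defm(\mathcal{C})^{\mov})/e_T(\Aut(\mathcal{C})^{\mov})$. Since $\tilde\Gamma$ is minimal, no two edges meeting at a $V^2$-vertex are combinable, so every $V^2$-flag is steady; consequently the ``steady'' qualifier in \eqref{eq:DefCContribution} becomes vacuous and this ratio matches the first factor of the Proposition verbatim.

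Next I would apply multiplicativity of Euler classes to the exact sequence \eqref{eq:NormalizationSteady}, which is valid at the sheaf level on $\Mbar^{\rig}_{\tilde\Gamma}$ because the steady nodes are canonically identified across the base. Since $\tilde\Gamma$ is minimal, every node of $C$ is steady, $\underline\nu$ ranges over all irreducible components of $\mathcal{C}$ (indexed by $E(\Gamma)\sqcup V^S(\tilde\Gamma)$), and $\xi$ ranges over all nodes. For each noncontracted edge $e$, \eqref{eq:NoncontractedFixedR1Vanish} gives $R^1=0$ and \eqref{eq:NoncontractedR0} gives the $R^0$-contribution, yielding the second factor of the Proposition. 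For each $v\in V^S(\tilde\Gamma)$, the $R\pi_*$ contribution is left in packaged form as the fourth factor.

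The last step, and the main obstacle, is to check that the third factor of the Proposition rewrites the steady-node denominator in the clean form claimed. By \eqref{eq:R0Nodes}, each steady node contributes $e_T(T_{(\VEval(v),\Mon(v,e))}I\Sym^d\P^r)$ in the denominator, and the set of steady nodes in a minimal tree is
\begin{align*}
\{v\in V^2(\tilde\Gamma)\}\sqcup\{(v,e)\in F(\Gamma):v\in V^S(\tilde\Gamma)\}.
\end{align*}
The identity to verify is that the quotient
\begin{align*}
\frac{\prod_{v\in V^1(\tilde\Gamma)\cup V^{1,1}(\tilde\Gamma)\cup V^2(\tilde\Gamma)}e_T(T_{(\VEval(v),\Mon(v))}I\Sym^d\P^r)}{\prod_{(v,e)\in F(\Gamma)}e_T(T_{(\VEval(v),\Mon(v,e))}I\Sym^d\P^r)}
\end{align*}
equals the reciprocal of this steady-node product. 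This is a term-by-term cancellation using condition \ref{item:V2Mon} (which forces $\Mon(v,e_v^1)=\Mon(v,e_v^2)$ for $v\in V^2$) together with the convention $\Mon(v)=\Mon(v,e_v)$ for $v\in V^1\cup V^{1,1}$: each $V^1$-flag and each $V^{1,1}$-flag appears once in both numerator and denominator and thus cancels, each $V^2$-vertex contributes two identical factors in the denominator and one in the numerator (leaving one factor per $V^2$-node in the denominator), and the $V^S$-flags appear only in the denominator. The leftover is exactly $1/\prod_{\xi}e_T(T_\xi I\Sym^d\P^r)$, completing the match with the normalization sequence. The analytic content is handled by the earlier derivations, so the proof reduces to this bookkeeping.
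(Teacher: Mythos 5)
Your proof is correct and takes essentially the same approach as the paper: substitute the contributions \eqref{eq:AutCContribution}, \eqref{eq:DefCContribution}, \eqref{eq:NoncontractedR0}, \eqref{eq:R0Nodes}, and the packaged contracted-vertex term into \eqref{eq:VirtualNormalBundle} via the steady-node normalization sequence \eqref{eq:NormalizationSteady}. You spell out in more detail than the paper the cancellation behind the third factor (the paper only remarks that ``the numerator corrects for the fact that $F(\Gamma)$ overcounts the steady nodes'') and correctly observe that minimality of $\tilde\Gamma$ forces $\mathcal{P}(\tilde\Gamma)=\emptyset$, making the ``steady'' restriction in \eqref{eq:DefCContribution} vacuous.
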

\begin{proof}[Proof of Proposition \ref{prop:VirtualFundamentalClass}]
  Recall from Theorem \ref{thm:VirtualLocalization} that the virtual
  fundamental class of $\Mbar_{\Gamma}$ is obtained from the
  fixed part of the perfect obstruction theory on
  $\Mbar_{g,n}(\Sym^d\P^r,\beta)$. By \eqref{eq:R0Nodes}, the
  fixed part of $\bigoplus_\xi R^0\pi_*(\xi,f^*T\Sym^d\P^r)$ is
  zero. Thus by \eqref{eq:NormalizationSteady},
  \begin{align*}
    R^1\pi_*(\mathcal{C},f^*T\Sym^d\P^r)\cong\bigoplus_{\underline{\nu}}R^1\pi_*(\mathcal{C}_{\underline{\nu}},f^*T\Sym^d\P^r).
  \end{align*}
  But we showed, in \eqref{eq:ContractedFixedVanish} and
  \eqref{eq:NoncontractedFixedR1Vanish}, that
  $\bigoplus_{\underline{\nu}}R^1\pi_*(\mathcal{C}_{\underline{\nu}},f^*T\Sym^d\P^r)$
  has no fixed part. Thus $R^1\pi_*(\mathcal{C},f^*T\Sym^d\P^r)$ has
  no fixed part. By Proposition 5.5 of \cite{BehrendFantechi1997}, the
  Proposition follows. (Smoothness already followed easily from
  Theorem \ref{thm:FixedLocusLosevManin}.)
\end{proof}
\begin{proof}[Proof of Proposition \ref{prop:VirtualNormalBundle}]
  The first line is the contribution from $\Defm(\mathcal{C})^{\mov}$
  and $\Aut(\mathcal{C})^{\mov},$ from \eqref{eq:AutCContribution} and
  \eqref{eq:DefCContribution}. The second line is the contribution of
  noncontracted components to
  $R\pi_*(\mathcal{C},f^*T\Sym^d\P^r),$ from
  \eqref{eq:NoncontractedR0} and
  \eqref{eq:NoncontractedFixedR1Vanish}. The third line is the
  contribution of steady nodes to
  $R\pi_*(\mathcal{C},f^*T\Sym^d\P^r),$ from \eqref{eq:R0Nodes}. (The
  numerator corrects for the fact that $F(\Gamma)$ overcounts the
  steady nodes.) The last line is the contribution of contracted
  components to
  $R\pi_*(\mathcal{C},f^*T\Sym^d\P^r)^{\mov},$ by definition.
\end{proof}

\begin{thm}\label{thm:Algorithm}
  The results of this section, together with Corollary
  \ref{cor:GraphOpenClosedSubstack} and Theorem
  \ref{thm:FixedLocusLosevManin}, provide an algorithm to compute any
  Gromov-Witten invariant of $\Sym^d\P^r$ (for any $d$) in terms of
  Hurwitz-Hodge integrals, i.e. twisted Gromov-Witten invariants of
  $BG$ for $G$ a product of symmetric groups.
\end{thm}
\begin{proof}
  Applying the virtual localization theorem
  \ref{thm:VirtualLocalization}, a genus-$g$ Gromov-Witten invariant
  of $\Sym^d\P^r$ is expressed as a sum
  $$\sum_{{\Gamma}\in
    G_{g,n}^{\min}(\Sym^d\P^r,\beta)}\int_{\Mbar_{{\Gamma}}}\frac{\iota^*\alpha}{e_T(N_{\Mbar_{{\Gamma}}}^{\vir})}.$$
  By Theorem \ref{thm:FixedLocusLosevManin}, $\Mbar_{{\Gamma}}$
  is a finite cover of a product of Losev-Manin spaces $\Mbar_e$
  (Section \ref{sec:LosevManin}) and spaces
  $\Mbar_v=\Mbar_{g_v,\vec\Mon(v)}(BS_{\VEval(v)},0)$ of admissible
  covers. The factors $\Mbar_e$ can be integrated over using Lemma
  \ref{lem:PsiPullback}, since the only cohomology classes in the
  integrands are $\psi$ classes at the two distinguished marked points
  (cf. \eqref{eqn:PsiPullback} in the proof of Theorem
  \ref{thm:ConeCharacterization}). The remaining integrals are over
  the factors $\Mbar_v.$ The integrand contains the factor
  $$\prod_{v\in
    V^S({\Gamma})}\frac{1}{e_T(R\pi_*(C_v,f^*T\Sym^d\P^r))^{\mov}},$$
  as well as $\psi$ classes and classes pulled back along evaluation
  maps, and is thus a twisted Gromov-Witten invariant of
  $BS_{VEval(v)}.$
\end{proof}

\section{Characterization of the Givental
  cone \texorpdfstring{$\mathcal{L}_{\Sym^d\P^r}$}{L\_\{Sym\^{}d P\^{}r\}}}\label{sec:CharacterizationOfLagrangianCone}
In this section, we apply the results of Sections
\ref{sec:TFixedStableMaps} and \ref{sec:VirtualNormalBundle} to give a
criterion (Theorem \ref{thm:ConeCharacterization}) that exactly
determines whether a given power series lies on the Givental cone
$\mathcal{L}_{\Sym^d\P^r}$. For the rest of the paper, we work only in
genus zero, so we refer to ``decorated trees'' rather than ``decorated
graphs.''
\begin{Def}\label{Def:1EdgeTrees}
  Fix $(\mu,\sigma)\in(I\Sym^d\P^r)^T.$ Let
  $\Upsilon(\mu,\sigma)\subseteq\Graphs_{0,2}(\Sym^d\P^r,\beta)$ be the
  set of 1-edge decorated trees
  \raisebox{-5pt}{
    \begin{tikzpicture}
      \draw (1,0.1) node {$\kappa=$};
      \draw (2,0) node {$\bullet$} node[left] {$v_1$};
      \draw (2,0)--(3,0);
      \draw (2.5,0) node[above] {$e$};
      \draw (3,0) node {$\bullet$} node[right] {$v_2$,};
    \end{tikzpicture}
  } with $g_{v_1}=g_{v_2}=0$, marking set $\{b_{n+1},b_{\bullet}\}$,
  with $\Mark({n+1})=v_1$ and $\Mark({\bullet})=v_2$, such that
  $\mu=\VEval(v_1)$ and $\sigma=\Mon(v_1,e).$
\end{Def}
\begin{notation}\label{not:WeightOfKappa}
  For $\kappa\in\Upsilon(\mu,\sigma)$, we write (using the notation of Definition \ref{Def:DecoratedGraph}):
  \begin{multicols}{2}
    \begin{itemize}
    \item $q(\kappa):=q(e)$,
    \item $\Mov(\kappa):=\Mov(e),$
    \item $\mov(\kappa):=\mov(e),$
    \item $\Stat(\kappa):=\Stat(e),$
    \item for $\eta\in\Mov({\kappa})$,
      $\beta_\eta({\kappa}):=\beta_\eta(e)=q(e)\cdot\eta$,
    \item $\beta(\kappa)=\sum_{\eta\in\Mov({\kappa})}\beta_\eta(\kappa)$
    \item $i_1^{\mov}(\kappa):=i^{\mov}(v_1,e)$,
    \item $i_2^{\mov}(\kappa):=i^{\mov}(v_2,e)$,
    \item $\mu'(\kappa):=\VEval(v_2)$,
    \item $\sigma'(\kappa):=\Mon(v_2,e)$, and
    \item $r(\kappa):=r(v_1,e)=r(v_2,e)=r_{n+1}$.
    \end{itemize}
  \end{multicols}
  
  We also define:
  $$w(\kappa):=\frac{\alpha_{i_1^{\mov}(\kappa)}-\alpha_{i_2^{\mov}(\kappa)}}{q(\kappa)}\in
  H^2_T(\Spec\C).$$
  \begin{rem}\label{rem:QCoarse}
    Note that $w({\kappa})$ is equal to the $T$-weight of the
    tangent space to the \textit{coarse moduli space} of the source
    curve $C$ at $b_{n+1}$; this is because $q({\kappa})$ is
    defined via coordinates on this coarse moduli space (see Lemma
    \ref{lem:IrreducibleFixedMap}).
  \end{rem}
\end{notation}
\begin{Def}\label{Def:RecursionCoefficient}
  Let $\kappa\in\Upsilon(\mu,\sigma)$ and let $a\in\Z_{>0}$ We
  define the recursion coefficient
  \begin{align*}
    \RC(\kappa,a)&=\frac{(-1)^{{\mov(\kappa)}-a}}{q(\kappa)^{{\mov(\kappa)}}}\binom{\sigma_{i_1^{\mov}(\kappa)}}{\Mov(\kappa)}\binom{{\mov(\kappa)}-1}{a-1}\\
    &\quad\cdot\frac{1}{\prod_{\eta\in\Mov(\kappa)}\prod_{\substack{1\le
          B\le\beta_\eta({\kappa})\\0\le
          i\le r\\(B,i)\ne(\beta_\eta({\kappa}),i_2^{\mov}(\kappa))}}\left(\frac{\beta_\eta({\kappa})-B}{\beta_\eta({\kappa})}\alpha_{i_1^{\mov}(\kappa)}+\frac{B}{\beta_\eta({\kappa})}\alpha_{i_2^{\mov}(\kappa)}-\alpha_i\right)},
  \end{align*}
  where
  $\binom{\sigma_{i_1^{\mov}(\kappa)}}{\Mov(\kappa)}$ is
  the number of ways of choosing $\Mov(\kappa)$ as a
  subpartition of $\sigma_{i_1^{\mov}(\kappa)}$ with specified
  parts.
\end{Def}
The following theorem and its proof are in the same spirit as Theorem 41
of \cite{CoatesCortiIritaniTseng2015}, which in turn is adapted from
Theorem 2 of \cite{Brown2014}.
\begin{thm}\label{thm:ConeCharacterization}
  Let $\mathbf{f}$ be an element of $\mathcal{H}[[x]]$ such that
  $\mathbf{f}|_{Q=x=0}=-1z,$ where $1$ denotes the fundamental class
  of $\Sym^d\P^r\subseteq I\Sym^d\P^r$. Then $\mathbf{f}$ is a
  $\Lambda_{\nov}^T[[x]]$-valued point of $\mathcal{L}_{\Sym^d\P^r}$
  if and only if for each $T$-fixed point $(\mu,\sigma)\in
  I\Sym^d\P^r$, the following three conditions hold:
  \begin{enumerate}[label=(\textbf{\Roman*})]
  \item The restriction $\mathbf{f}_{(\mu,\sigma)}$ along
    $\iota_{(\mu,\sigma)}:(\mu,\sigma)\into I\Sym^d\P^r$ is a power
    series in $Q$ and $x$, such that each coefficient of this power
    series is an element of $H^*_{T,\loc}(\bullet)(z).$ Each
    coefficient is regular in $z$ except for possible poles at $z=0,$
    $z=\infty,$ and\label{item:Poles}
    \begin{align*}
      z\in\{w(\kappa):\kappa\in\Upsilon(\mu,\sigma)\}.
    \end{align*}
  \item The Laurent coefficients of $\mathbf{f}_{(\mu,\sigma)}$ at the poles (other than $z=0$ and
    $z=\infty$) satisfy the recursion relation:
  \begin{align}\label{eq:LaurentRecursion}
    \Laur(\mathbf{f}_{\mu,\sigma},(w-z)^{-a})=\sum_{\substack{\kappa\in\Upsilon_{(\mu,\sigma)}\\w(\kappa)=w\\\mov(\kappa)\ge
        a}}Q^{\beta(\kappa)}\RC(\kappa,a)\Laur(\mathbf{f}_{(\mu'(\kappa),\sigma'(\kappa))},(w-z)^{{\mov(\kappa)}-a})
  \end{align}
  for $a>0$, and\label{item:Recursion}
\item The restriction $\mathbf{f}_\mu$ along $\iota_\mu:I\mu\into
  I\Sym^d\P^r$ is a $\Lambda_{\nov}^T[[x]]$-valued point of
  $\mathcal{L}_\mu^{\tw}.$\label{item:TwistedCone}
  \end{enumerate}
\end{thm}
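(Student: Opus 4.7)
The plan is to adapt the Brown \cite{Brown2014} / Coates--Corti--Iritani--Tseng \cite{CoatesCortiIritaniTseng2015} strategy by applying virtual $T$-localization to the Gromov--Witten invariants appearing in the definition of a $\Lambda_{\nov}^T[[x]]$-valued point of $\mathcal{L}_{\Sym^d\P^r}$, using the stratification $(\Mbar_{0,n+1}(\Sym^d\P^r,\beta))^T=\bigsqcup_{\tilde\Gamma}\Mbar_{\tilde\Gamma}$ from Corollary \ref{cor:TreeOpenClosedSubstack} and the explicit $e_T(N^{\vir}_{\Mbar_{\tilde\Gamma}})$ of Proposition \ref{prop:VirtualNormalBundle}.

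For the ``only if'' direction, I take $\mathbf{f}\in\mathcal{L}_{\Sym^d\P^r}$, restrict it to a fixed point $(\mu,\sigma)\in(I\Sym^d\P^r)^T$, and localize each summand of the defining $J$-function. Decorated trees $\tilde\Gamma$ in which the distinguished mark $b_{n+1}$ sits on a stable vertex $v$ give contributions polynomial in $z$, since the only $z$-dependence enters through $1/(z-\bar\psi_{n+1})$ expanded as a geometric series in the nilpotent class $\bar\psi_{n+1}\in H^*(\Mbar_v)$; summing these over all such $\tilde\Gamma$ reproduces the defining integral of a point of $\mathcal{L}^{\tw}_{BS_{\VEval(v)}}$ with twisting bundle $R\pi_*(\mathcal{C}_v,f^*T\Sym^d\P^r)$, giving condition \ref{item:TwistedCone}. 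Trees with $b_{n+1}$ on a leaf vertex $v_1\in V^{1,1}(\tilde\Gamma)$ attached to an edge $e$ instead produce poles at $z=\bar{w}(\tilde\kappa)$ through the interaction of $1/(z-\bar\psi_{n+1})$ with the factor $\psi^{\Mbar_{e_{v_1}}}_{v_1}$ in $e_T(N^{\vir})$ from \eqref{eq:AutCContribution}, yielding \ref{item:Poles}. The Laurent coefficients in \ref{item:Recursion} are then obtained by reducing the residue to an integral over the edge moduli space $\Mbar_e$ of Theorem \ref{thm:FixedLocusLosevManin}. The key new feature relative to the toric case is that $\Mbar_e$ is $(\mov(e)-1)$-dimensional, so $\psi$-classes on it produce the whole tower of poles $(\bar w-z)^{-a}$; the binomial $\binom{\mov(\tilde\kappa)-1}{a-1}$ will arise from $\int_{\Mbar_e}\psi_{v_1}^{a-1}\psi_{v_2}^{\mov(e)-a}$, computed via Lemma \ref{lem:PsiPullback}. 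The remaining factors of $\RC(\tilde\kappa,a)$ are read off from Proposition \ref{prop:VirtualNormalBundle}, and the term $\mathbf{f}_{(\mu'(\tilde\kappa),\sigma'(\tilde\kappa))}$ on the right of \eqref{eq:LaurentRecursion} arises from the gluing decomposition of $\Mbar_{\tilde\Gamma}$ together with induction on the number of edges of $\tilde\Gamma$.

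For the converse, I argue by uniqueness. Given $\mathbf{f}$ satisfying \ref{item:Poles}--\ref{item:TwistedCone}, I construct $\mathbf{g}\in\mathcal{L}_{\Sym^d\P^r}$ whose restriction to each $(\mu,\sigma)$ realizes the same twisted-cone data \ref{item:TwistedCone} as $\mathbf{f}$, using the freedom to prescribe the input $\mathbf{t}(z)$. By the necessity direction just established, $\mathbf{g}$ also satisfies \ref{item:Poles}--\ref{item:Recursion}. The difference $\mathbf{f}-\mathbf{g}$ then has no poles except at $z=0,\infty$ and zero regular part at every $(\mu,\sigma)$; the recursion \ref{item:Recursion}, applied by induction on $\beta$ and on the partial order $\le$ on decorated trees, forces all its Laurent parts at the $\bar w(\tilde\kappa)$ to vanish as well, so $\mathbf{f}=\mathbf{g}$. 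The main obstacle will be the explicit Losev--Manin $\psi$-integral that produces $\RC(\tilde\kappa,a)$ with its precise combinatorial shape --- in particular the factor $\binom{\sigma_{i_1^{\mov}(\tilde\kappa)}}{\Mov(\tilde\kappa)}$ that encodes the choice of moving submultiset --- together with the careful bookkeeping of the $\Aut(\Gamma)$- and wreath-product automorphism factors from Theorem \ref{thm:FixedLocusLosevManin} and Remark \ref{rem:GluingAutomorphisms}, so that the contributions of distinct $\tilde\kappa\in\Upsilon(\mu,\sigma)$ sharing a common value of $\bar w$ assemble cleanly into the single recursion \eqref{eq:LaurentRecursion}.
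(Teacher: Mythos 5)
Your forward direction is essentially the paper's argument: restrict to a fixed point, localize over minimal decorated trees, and split according to whether $b_{n+1}$ lies on a stable vertex (nilpotent $\bar{\psi}_{n+1}$, hence poles only at $z=0$, and after resumming the hanging subtrees into modified insertions, condition \ref{item:TwistedCone}) or on a $V^{1,1}$ leaf (an integral over the edge space $\Mbar_e$ of Theorem \ref{thm:FixedLocusLosevManin}, producing the poles at $\bar{w}(\tilde\kappa)$, the binomial $\binom{\mov(\tilde\kappa)-1}{a-1}$ from the two-point $\psi$-integral on the Losev--Manin space via Lemma \ref{lem:PsiPullback}, and the remaining factors of $\RC(\tilde\kappa,a)$ from Proposition \ref{prop:VirtualNormalBundle} together with the automorphism count $\abs{C_\mu(\sigma)}/\bigl(\abs{S_e}\prod_\eta\beta_\eta(\tilde\kappa)\bigr)=q(\tilde\kappa)^{-\mov(\tilde\kappa)}\binom{\sigma}{\Mov(\tilde\kappa)}$). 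One slip: the pole at $z=\bar{w}(\tilde\kappa)$ does not come from the factor $\psi^{\Mbar_{e_v}}_v$ of \eqref{eq:AutCContribution} --- that product runs over $V^1(\tilde\Gamma)$, not $V^{1,1}(\tilde\Gamma)$ --- but from the fact that $\iota_{\tilde\Gamma}^*\bar{\psi}_{n+1}=\bar{\psi}^{\Mbar_e}_{v_1}$ equals the nonequivariant class plus the weight $-\bar{w}(\tilde\kappa)$, so expanding $1/(-z-\bar{\psi}_{n+1})$ and integrating over the $(\mov(\tilde\kappa)-1)$-dimensional $\Mbar_e$ gives pole order at most $\mov(\tilde\kappa)$; your "key new feature" sentence is the right picture, the citation is not.

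The converse, as written, has a gap. If "realizes the same twisted-cone data \ref{item:TwistedCone}" means $\mathbf{g}_\mu=\mathbf{f}_\mu$ for all $\mu$, then producing such a $\mathbf{g}\in\mathcal{L}_{\Sym^d\P^r}$ is essentially the statement to be proved (and then $\mathbf{f}=\mathbf{g}$ follows from localization alone, making the rest redundant). If it means matching only some input data, the differencing does not run: condition \ref{item:Recursion} is linear in $\mathbf{f}$, so $\mathbf{f}-\mathbf{g}$ inherits the recursion, but condition \ref{item:TwistedCone} is not linear (the twisted cone is not a linear subspace, and \eqref{eq:TwistedConeDef} is nonlinear in $\mathbf{t}$), so you cannot control the $O(z^{-1})$ tail of the difference, and "zero regular part at every $(\mu,\sigma)$" is unjustified; moreover induction "on the partial order $\le$ on decorated trees" is not the relevant well-founded structure. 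The paper instead isolates the free data: using \ref{item:Poles} and \ref{item:Recursion} it writes $\mathbf{f}_\mu=-1_\mu z+\sum_\sigma\boldsymbol{\tau}_{(\mu,\sigma)}1_{(\mu,\sigma)}+O(z^{-1})$ with $\boldsymbol{\tau}_{(\mu,\sigma)}$ as in \eqref{eq:Tau}, then shows by induction on the total degree $\beta+k$ in $(Q,x)$ that the three conditions determine every coefficient of $\mathbf{f}_{(\mu,\sigma)}$ from $\{\mathbf{t}_{(\mu,\sigma)}\}$ --- the pole parts because $\beta(\tilde\kappa)>0$ strictly lowers degree in \eqref{eq:LaurentRecursion}, and the $z^{-1}$ tail because \eqref{eq:TwistedConeDef} expresses it in terms of $\boldsymbol{\tau}_\mu$ of degree at most $\beta+k$ --- and finally assembles $\{\mathbf{t}_{(\mu,\sigma)}\}$ through the localization isomorphism into a global $\mathbf{t}(z)\in\H_+[[x]]$ whose cone point satisfies the same conditions with the same input, hence equals $\mathbf{f}$. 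Your plan becomes correct if you replace "match the twisted-cone data" by "match the inputs $\mathbf{t}_{(\mu,\sigma)}$" and replace the differencing by this determination-by-induction argument.
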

\begin{rem}
  In \ref{item:TwistedCone}, $\Lambda_{\nov}^T$ is the equivariant
  Novikov ring associated to $\Sym^d\P^r,$ not $\mu$. In other words,
  $\Lambda_{\nov}^T[[x]]=H^*_{CR,T,\loc}(\mu)[[Q,x]].$
\end{rem}
\begin{rem}
  The major difference between Theorem \ref{thm:ConeCharacterization}
  and the corresponding theorems in \cite{CoatesCortiIritaniTseng2015}
  and \cite{Brown2014} is that condition \ref{item:Recursion} gives a
  recursive relation for \emph{all} negative-exponent Laurent
  coefficients at $z=w(\kappa)$, in terms of
  nonnegative-exponent ones. In \cite{CoatesCortiIritaniTseng2015} and
  \cite{Brown2014}, only stacks with isolated 1-dimensional $T$-orbits
  are considered.  Thus in that case, the poles at $z=w(\kappa)$
  are simple, and a recursive relation is given for their residues.
\end{rem}
\begin{proof}
 Let $\mathbf{f}$ be a
  $\Lambda_{\nov}^T[[x]]$-valued point of $\mathcal{L}_{\Sym^d\P^r}$. By definition, we can write
  \begin{align*}
    \mathbf{f}&=-1z+\mathbf{t}(z)+\sum_{n=0}^\infty\sum_{\beta=0}^\infty\sum_{\phi}\frac{Q^\beta}{n!}\left\langle
                \mathbf{t}(\bar{\psi}),\ldots,\mathbf{t}(\bar{\psi}),\frac{\gamma_\phi}{-z-\bar{\psi}}\right\rangle_{0,n+1,\beta}^{\Sym^d\P^r,T}\gamma^\phi\\
              &=-1z+\mathbf{t}(z)+\sum_{n=0}^\infty\sum_{\beta=0}^\infty\frac{Q^\beta}{n!}(\ev_{n+1})_*\left(\prod_{j=1}^n\ev_j^*\mathbf{t}(\bar{\psi})\cup\frac{1}{-z-\bar{\psi}}\cap[\Mbar_{0,n+1}(\Sym^d\P^r,\beta)]^{\vir}\right)
  \end{align*}
  for $\mathbf{t}(z)\in\H^+[[x]]$ with $\mathbf{t}|_{Q=x=0}=0.$ The
  restriction $\mathbf{f}_{(\mu,\sigma)}$ is then
  \begin{align*}
    -\delta_{\sigma=(1,\ldots,1)}z&+\iota_{(\mu,\sigma)}^*\mathbf{t}(z)\\
    &+\sum_{n=0}^\infty\sum_{\beta=0}^\infty\frac{Q^\beta}{n!}\iota_{(\mu,\sigma)}^*\left((\ev_{n+1})_*\left(\prod_{j=1}^n\ev_j^*\mathbf{t}(\bar{\psi})\cup\frac{1}{-z-\bar{\psi}}\cap[\Mbar_{0,n+1}(\Sym^d\P^r,\beta)]^{\vir}\right)\right).
  \end{align*}
  Using the projection formula, we write
  \begin{align*}
    \iota_{(\mu,\sigma)}^*&\left((\ev_{n+1})_*\left(\prod_{j=1}^n\ev_j^*\mathbf{t}(\bar{\psi}_j)\cup\frac{1}{-z-\bar{\psi}_{n+1}}\cap[\Mbar_{0,n+1}(\Sym^d\P^r,\beta)]^{\vir}\right)\right)\\
    &=\abs{C_{\mu}(\sigma)}\int_{\Sym^d\P^r}(\iota_{(\mu,\sigma)})_*\iota_{(\mu,\sigma)}^*\left((\ev_{n+1})_*\left(\prod_{j=1}^n\ev_j^*\mathbf{t}(\bar{\psi}_j)\cup\frac{1}{-z-\bar{\psi}_{n+1}}\cap[\Mbar_{0,n+1}(\Sym^d\P^r,\beta)]^{\vir}\right)\right)\\
                          &=\abs{C_{\mu}(\sigma)}\int_{\Sym^d\P^r}[(\mu,\sigma)]\cup\left((\ev_{n+1})_*\left(\prod_{j=1}^n\ev_j^*\mathbf{t}(\bar{\psi}_j)\cup\frac{1}{-z-\bar{\psi}_{n+1}}\cap[\Mbar_{0,n+1}(\Sym^d\P^r,\beta)]^{\vir}\right)\right)
  \end{align*}
  \begin{align*}
    &=\abs{C_{\mu}(\sigma)}\int_{\Sym^d\P^r}\left((\ev_{n+1})_*\left(\prod_{j=1}^n\ev_j^*\mathbf{t}(\bar{\psi}_j)\cup\frac{\ev_{n+1}^*([(\mu,\sigma)])}{-z-\bar{\psi}_{n+1}}\cap[\Mbar_{0,n+1}(\Sym^d\P^r,\beta)]^{\vir}\right)\right)\\
    &=\abs{C_{\mu}(\sigma)}\int_{[\Mbar_{0,n+1}(\Sym^d\P^r,\beta)]^{\vir}}\left(\prod_{j=1}^n\ev_j^*\mathbf{t}(\bar{\psi}_j)\cup\frac{\ev_{n+1}^*([(\mu,\sigma)])}{-z-\bar{\psi}_{n+1}}\right)\\
    &=\abs{C_{\mu}(\sigma)}\left\langle\mathbf{t}(\bar{\psi}),\ldots,\mathbf{t}(\bar{\psi}),\frac{[(\mu,\sigma)]}{-z-\bar{\psi}}\right\rangle_{0,n+1,\beta}^{\Sym^d\P^r,T}.
  \end{align*}
  The first equality uses the identification of
  $\int_{\Sym^d\P^r}\circ\iota_{(\mu,\sigma)}$ with the identity map
  $\Spec\C\to\Spec\C$ on coarse moduli spaces, and the factor
  $\abs{C_{\mu}(\sigma)}$ corrects for the isotropy at
  $(\mu,\sigma)\in I\Sym^d\P^r$. (Recall that $C_\mu(\sigma)$ denotes
  the centralizer of any element of $\sigma$ in
  $G_\mu$.) 
  In summary,
  \begin{align}\label{eq:RestrictionOfF}
    \mathbf{f}_{(\mu,\sigma)}&=-\delta_{\sigma=(1,\ldots,1)}z+\mathbf{t}_{(\mu,\sigma)}(z)\\
    &\quad\quad\quad+\sum_{n=0}^\infty\sum_{\beta=0}^\infty\frac{\abs{C_{\mu}(\sigma)}Q^\beta}{n!}\left\langle\mathbf{t}(\bar{\psi}),\ldots,\mathbf{t}(\bar{\psi}),\frac{[(\mu,\sigma)]}{-z-\bar{\psi}}\right\rangle_{0,n+1,\beta}^{\Sym^d\P^r,T},\nonumber
  \end{align}
  where
  $\mathbf{t}_{(\mu,\sigma)}(z):=\iota_{(\mu,\sigma)}^*\mathbf{t}(z).$ 
  Now we calculate \eqref{eq:RestrictionOfF} by virtual torus
  localization (see Theorem \ref{thm:VirtualLocalization}). Namely, we
  may write
  \begin{align}\label{eq:CoeffOfFAsFiniteSum}
    \abs{C_{\mu}(\sigma)}\left\langle\mathbf{t}(\bar{\psi}),\ldots,\mathbf{t}(\bar{\psi}),\frac{[(\mu,\sigma)]}{-z-\bar{\psi}}\right\rangle_{0,n+1,\beta}^{\Sym^d\P^r,T}=\sum_{\Gamma\in\Graphs_{0,n+1}^{\min}(\Sym^d\P^r,\beta)}\Contr_{(\mu,\sigma)}(\Gamma).
  \end{align}
  We can partition $\Graphs_{0,n+1}^{\min}(\Sym^d\P^r,\beta)$ into
  three subsets:
  \begin{enumerate}[label=(\roman*)]
  \item $\Gamma$ such that
    $(\VEval(\Mark({n+1})),\Mon({n+1}))\ne(\mu,\sigma)$,\label{item:vanish}
  \item $\Gamma$ such that
    $(\VEval(\Mark({n+1})),\Mon({n+1}))=(\mu,\sigma)$ and
    $\Mark({n+1})\in V^{1,1}(\Gamma)$,\label{item:recursion} and
  \item $\Gamma$ such that
    $(\VEval(\Mark({n+1})),\Mon({n+1}))=(\mu,\sigma)$ and
    $\Mark({n+1})\in V^S(\Gamma)$. \label{item:initial}
  \end{enumerate}
  In some literature, e.g. \cite{CiocanFontanineKim2014}, decorated
  trees of type \ref{item:recursion} are called \emph{recursion type}
  and those of type \ref{item:initial} are called \emph{initial type}.
  (We will see below, however, that in our setup both types are used
  recursively.) Let $v_1:=\Mark({n+1})$.

  For a tree $\Gamma$ of type \ref{item:vanish}, the
  restriction $\ev_{n+1}^*([(\mu,\sigma)])$ vanishes, hence
  $\Contr_{(\mu,\sigma)}(\Gamma)=0$. For this reason, we may
  simplify our notation, and write
  $\Contr(\Gamma):=\Contr_{(\mu,\sigma)}(\Gamma),$ where
  $\mu=\VEval(\Mark({n+1}))$ and $\sigma=\Mon({n+1}).$

  If $\Gamma$ is a tree of type \ref{item:initial}, then by
  Theorem \ref{thm:FixedLocusLosevManin} and Corollary
  \ref{cor:UniversalCurve}, $\bar{\psi}_{n+1}$ is pulled back from
  $\Mbar_{0,\vec\Mon(v_1)}(BG_{\mu},0),$ where $G_\mu$ is the isotropy
  group of $\mu$. Since this stack parametrizes maps that factor
  through the fixed point $\mu$, the action of $T$ is trivial, hence
  $$H^*_{T,\loc}(\Mbar_{0,\vec\Mon(v_1)}(BG_{\mu},0))\cong
  H^*(\Mbar_{0,\vec\Mon(v_1)}(BG_{\mu},0))\otimes
  H^*_{T,\loc}(\bullet).$$ In particular, $\bar{\psi}_{n+1}$ is
  nilpotent. It follows that $\Contr(\Gamma)$ is a polynomial in
  $z^{-1}$, hence has a pole only at $z=0$.

  Finally, let $\Gamma$ be a tree of type
  \ref{item:recursion}. By \eqref{eq:VirtualIntegralFormula}, we have
  \begin{align}\label{eq:LocalizationContribution}
    \Contr(\Gamma)=\abs{C_{\mu}(\sigma)}\int_{[\Mbar_{\Gamma}]'}\frac{1}{e_T(N_{\Gamma}^{\vir})}\iota_{\Gamma}^*\left(\prod_{j=1}^n\ev_j^*\mathbf{t}(\bar{\psi})\cup\frac{\ev_{n+1}^*[(\mu,\sigma)]}{-z-\bar{\psi}_{n+1}}\right),
  \end{align}
  where $\iota_{\Gamma}$ is the inclusion
  $\Mbar_{\Gamma}\into\Mbar_{0,n+1}(\Sym^d\P^r,\beta)$. 
  Note that
  $\ev_{n+1}\circ\iota_{\Gamma}$ factors through $(\mu,\sigma),$
  hence $\iota_{\Gamma}^*\ev_{n+1}^*[(\mu,\sigma)]$ is the
  weight $e_T(T_{(\mu,\sigma)}I\Sym^d\P^r)$.

  Then $\Gamma$ has a decorated subtree
  $\kappa\in\Upsilon(\mu,\sigma)$, obtained by removing all
  edges except for $e:=e_{v_1}$ (and necessary vertices), and all
  marked points except $b_{n+1}$. Let
  $\Gamma\setminus\kappa$ denote the tree obtained by
  \emph{pruning $\kappa$}. That is,
  $\Gamma\setminus
  \kappa\in\Graphs_{0,n+1}^{\min}(\Sym^d\P^r,\beta-\beta({\kappa}))$
  is defined by $V(\Gamma\setminus\kappa)=V(\Gamma)\setminus\{v_1\}$,
  $E(\Gamma\setminus\kappa)=E(\Gamma)\setminus e,$ and decorations
  $\Mark,$ $\VEval,$ $q$, and $\Mon$ are unchanged, except
  $\Mark({n+1}):=v_2$, where $v_2$ is the common vertex of
  $\kappa$ and $\Gamma\setminus\kappa$. Observe that
  an automorphism of $\Gamma$ fixes $b_{n+1},$ and therefore fixes
  $e$, so we have
  $\Aut(\Gamma)=\Aut(\Gamma\setminus\kappa)$. Thus
  by Theorem \ref{thm:FixedLocusLosevManin}, up to a
  $\bar{C}_{\VEval(v_2)}(\Mon(v_2,e))$-gerbe, we may write
  \begin{align*}
    \Mbar_{\Gamma}\cong\Mbar_e\times\Mbar_{\Gamma\setminus\kappa}.
  \end{align*}
  We factor the
  $T$-equivariant map $\Mbar_{\Gamma}\to\Spec\C$ through the
  second projection, i.e. we integrate over $\Mbar_e$:
  \begin{align*}
    \Contr(\Gamma)=\frac{\abs{C_{\mu}(\sigma)}\abs{C_{\mu'({\kappa})}(\sigma'({\kappa}))}}{r({\kappa})}\int_{[\Mbar_{\Gamma\setminus\kappa}]'}\left(\int_{\Mbar_e}\frac{e_T(T_{(\mu,\sigma)}I\Sym^d\P^r)}{e_T(N_{\Gamma}^{\vir})}\iota_{\Gamma}^*\left(\prod_{j=1}^n\ev_j^*\mathbf{t}(\bar{\psi})\cup\frac{1}{-z-\bar{\psi}_{n+1}}\right)\right).
  \end{align*}
  The factor
  $\abs{C_{\mu'(\kappa)}(\sigma'(\kappa))}/r(\kappa)$
  is the order of $\bar{C}_{\VEval(v_2)}(\Mon(v_2,e))$. From
  Proposition \ref{prop:VirtualNormalBundle}, we may write
  $$\frac{e_T(T_{(\mu,\sigma)}I\Sym^d\P^r)}{e_T(N_{\Gamma}^{\vir})}=\frac{1}{W}\cdot\frac{e_T(T_{(\mu'(\kappa),\sigma'(\kappa))}I\Sym^d\P^r)}{e(N_{\Gamma\setminus\kappa}^{\vir})(-\psi^{\Mbar_{v_2}}_e-\psi^{\Mbar_e}_{v_2})},$$ where
  \begin{align*}
    W:&=\frac{\prod_{\eta\in\Stat(\kappa)}\prod_{i\ne
        i(\eta)}(\alpha_{i(\eta)}-\alpha_i)}{e_T(T_{(\mu,\sigma)}I\Sym^d\P^r)}\prod_{\eta\in\Mov(\kappa)}\prod_{\substack{A+B=\beta_\eta(\kappa)\\0\le
        i\le
        r\\(A,i)\ne(0,i^{\mov}(v_2,e))\\(B,i)\ne(0,i^{\mov}(v_1,e))}}\left(\frac{A}{\beta_\eta(\kappa)}\alpha_{i^{\mov}(v_1,e)}+\frac{B}{\beta_\eta(\kappa)}\alpha_{i^{\mov}(v_2,e)}-\alpha_i\right)\\
    &=\prod_{\eta\in\Mov(\kappa)}\prod_{\substack{1\le
        B\le\beta_\eta(\kappa)\\0\le i\le
        r\\(B,i)\ne(\beta_\eta(\kappa),i^{\mov}(v_2,e))}}\left(\frac{\beta_\eta(\kappa)-B}{\beta_\eta(\kappa)}\alpha_{i^{\mov}(v_1,e)}+\frac{B}{\beta_\eta(\kappa)}\alpha_{i^{\mov}(v_2,e)}-\alpha_i\right)\in
    H^*_{T,\loc}(\Spec\C)
  \end{align*}
  Note that the cancellation in the last step removes the factors
  where $B=0$, and that $1/W$ is the product appearing in
  $\RC(\kappa,a)$.

  To avoid confusion, we write $\bar{\psi}_{n+1}^{\Gamma}$
  (resp.  $\bar{\psi}_{n+1}^{\Gamma\setminus\kappa}$) for
  the $\bar{\psi}$-class at the $(n+1)$st marked point on
  $\Mbar_{\Gamma}$ (resp.
  $\Mbar_{\Gamma\setminus\kappa}$), recalling that on
  $\Gamma\setminus\kappa$ we defined $\Mark({n+1})=v_2.$
  We also have
  $\iota_{\Gamma}^*\bar{\psi}_{n+1}^{\Gamma}=\bar{\psi}^{\Mbar_e}_{v_1}.$
  The $T$-weight on $\bar{\psi}_{v_1}^{\Mbar_e}$ is
  $-w(\kappa)$ (see Notation \ref{not:WeightOfKappa}), so
  we have
  \begin{align*}
    \bar{\psi}_{v_1}^{\Mbar_e}=\bar{\psi}_{v_1}^{\noneq}-w(\kappa)\in
    H^*_T(\Mbar_{\Gamma})\cong H^*(\Mbar_{\Gamma})\otimes
    H^*_T(\Spec\C),
  \end{align*}
  where $\bar{\psi}_{v_1}^{\noneq}$ denotes the nonequivariant
  $\bar{\psi}$-class. Similarly
  $\bar{\psi}_{v_2}^{\Mbar_e}=\bar{\psi}_{v_2}^{\noneq}+w(\kappa).$
  Then since $\iota_{\Gamma}^*\ev_j^*\mathbf{t}(\bar{\psi})$ is
  pulled back from $\Mbar_{\Gamma\setminus\kappa}$,
  \begin{align*}
    \Contr(\Gamma)&=\frac{\abs{C_{\mu}(\sigma)}\abs{C_{\mu'(\kappa)}(\sigma'(\kappa))}}{r(\kappa)}\frac{e_T(T_{(\mu'(\kappa),\sigma'(\kappa))}I\Sym^d\P^r)}{W}\\
    &\quad\quad\cdot\int_{[\Mbar_{\Gamma\setminus\kappa}]'}\left(\frac{\iota_{\Gamma}^*\left(\prod_{j=1}^n\ev_j^*\mathbf{t}(\bar{\psi})\right)}{e_T(N_{\Gamma\setminus\kappa}^{\vir})}\int_{\Mbar_e}\frac{1}{(-\psi_{n+1}^{\Gamma\setminus\kappa}-\psi^{\noneq}_{v_2}-w(\kappa))}\frac{1}{(-z-\bar{\psi}^{\noneq}_{v_1}+w(\kappa))}\right).
  \end{align*}
  We compute the last integral using the fact that $w(\kappa)$
  is invertible, and Lemma \ref{lem:PsiPullback}, which says we may
  integrate on $\Mbar_{0,k+2}$ instead of $\Mbar_e$. We use
  $$r(\kappa)(-\psi_{n+1}^{\Gamma\setminus\kappa}-\psi_{v_2})=-\bar{\psi}_{n+1}^{\Gamma\setminus\kappa}-\bar{\psi}_{v_2}=\bar{\psi}_{n+1}^{\Gamma\setminus\kappa}-\bar{\psi}^{\noneq}_{v_2}-w(\kappa).$$ It
  is well-known (see e.g. \cite{KockNotes}, Lemma 1.5.1) that
  \begin{align}\label{eqn:PsiPullback}
    \int_{\Mbar_{0,k}}\psi_1^{m}\psi_2^{k-3-m}=\binom{k-3}{m}.
  \end{align}
  By Lemma \ref{lem:PsiPullback}, this identity holds on
  $\Mbar_{0|k|\infty}$ also. Thus:
  \begin{align}\label{eq:PsiClassComputation}
    &\int_{\Mbar_e}\frac{1}{(-\bar{\psi}_{n+1}^{\Gamma\setminus\kappa}-\bar{\psi}^{\noneq}_{v_2}-w(\kappa))}\frac{1}{(-z-\bar{\psi}^{\noneq}_{v_1}+w(\kappa))}\nonumber\\
    &\quad\quad=\frac{1}{\abs{S_e}\prod_{\eta\in\Mov(\kappa)}\beta_\eta(\kappa)}\int_{\Mbar_{v_1|{\mov(\kappa)}|v_2}}\left(\sum_{m_1=0}^\infty\frac{(\bar{\psi}_{v_2})^{m_1}}{(-\bar{\psi}_{n+1}^{\Gamma\setminus\kappa}-w(\kappa))^{m_1+1}}\right)\left(\sum_{m_2=0}^\infty\frac{(\bar{\psi}_{v_1})^{m_2}}{(-z+w(\kappa))^{m_2+1}}\right)\nonumber\\
    &\quad\quad=\frac{1}{\abs{S_e}\prod_{\eta\in\Mov(\kappa)}\beta_\eta(\kappa)}\sum_{m_1+m_2={\mov(\kappa)}-1}\frac{\binom{{\mov(\kappa)-1}}{m_1}}{(-\bar{\psi}_{n+1}^{\Gamma\setminus\kappa}-w(\kappa))^{m_1+1}(-z+w(\kappa))^{m_2+1}}\\
    &\quad\quad=\frac{1}{\abs{S_e}\prod_{\eta\in\Mov(\kappa)}\beta_\eta(\kappa)}\frac{(-z-\bar{\psi}_{n+1}^{\Gamma\setminus\kappa})^{{\mov(\kappa)}-1}}{(-\bar{\psi}_{n+1}^{\Gamma\setminus\kappa}-w(\kappa))^{{\mov(\kappa)}}(-z+w(\kappa))^{{\mov(\kappa)}}}.\nonumber
  \end{align}
  The last equality in \eqref{eq:PsiClassComputation} comes from
  expanding
  $$((-z+w(\kappa))+(-\bar{\psi}_{n+1}^{\Gamma\setminus\kappa}-w(\kappa)))^{{\mov(\kappa)}-1}$$
  via the binomial theorem. Altogether, we have
  \begin{align}\label{eq:ContrFactored}
    \Contr(\Gamma)&=\frac{\abs{C_{\mu}(\sigma)}\abs{C_{\mu'(\kappa)}(\sigma'(\kappa))}}{\abs{S_e}\prod_{\eta\in\Mov(\kappa)}\beta_\eta(\kappa)}\frac{e_T(T_{(\mu'(\kappa),\sigma'(\kappa))}I\Sym^d\P^r)}{W\cdot(-z+w(\kappa))^{{\mov(\kappa)}}}\\
      &\quad\quad\cdot\int_{[\Mbar_{\Gamma\setminus\kappa}]'}\left(\frac{\iota_{\Gamma}^*\left(\prod_{j=1}^n\ev_j^*\mathbf{t}(\bar{\psi})\right)}{e_T(N_{\Gamma\setminus\kappa}^{\vir})}\frac{(-z-\bar{\psi}_{n+1}^{\Gamma\setminus\kappa})^{{\mov(\kappa)}-1}}{(-\bar{\psi}_{n+1}^{\Gamma\setminus\kappa}-w(\kappa))^{{\mov(\kappa)}}}\right).\nonumber
  \end{align}
  For fixed $\beta_0$, and $n_0$, from \eqref{eq:CoeffOfFAsFiniteSum},
  the coefficient of $Q^{\beta_0}x^{n_0}$ in
  $\mathbf{f}_{(\mu,\sigma)}$ only has contributions from
  $\Gamma\in\Graphs_{0,n}(\Sym^d\P^r,\beta)$ for
  $\beta+n\le\beta_0+n_0$. This is because $\mathbf{t}(z)\in\langle
  Q,x\rangle$, so if $\H[[x]]$ is graded by giving $Q$ and $x$ degree
  1, then the $(n,\beta)$ term in \eqref{eq:RestrictionOfF} has degree
  at least $n+\beta$. In particular,
  $\bigcup_{\beta+n\le\beta_0+n_0}\Graphs_{0,n}(\Sym^d\P^r,\beta)$ is a
  finite set. Thus \eqref{eq:CoeffOfFAsFiniteSum} and
  \eqref{eq:ContrFactored} realize the contribution to such a
  coefficient from trees of type \ref{item:recursion} as a finite
  sum of rational functions with poles at the weights
  $\kappa$. Together with the analysis above for types
  \ref{item:vanish} and \ref{item:initial}, this proves that
  $\mathbf{f}_{(\mu,\sigma)}$ satisfies condition \ref{item:Poles}
  of the Theorem.

  \bigskip

  We consider the Laurent coefficient
  $\Laur(\Contr(\Gamma),(w-z)^{-a}).$ By
  \eqref{eq:ContrFactored}, $\Laur(\Contr(\Gamma),(w-z)^{-a})$
  is zero if $w\ne w(\kappa),$ or if
  ${\mov(\kappa)}<a.$ Otherwise,
  \begin{align*}
    \Laur&(\Contr(\Gamma),(w-z)^{-a})\\
    &=\frac{1}{({\mov(\kappa)}-a)!}\left.\left(\deriv[{\mov(\kappa)}-a]{}{(w(\kappa)-z)}(w(\kappa)-z)^{{\mov(\kappa)}}\Contr(\Gamma)\right)\right|_{z\mapsto
      w(\kappa)}\\
    &=\frac{(-1)^{{\mov(\kappa)}-a}\abs{C_{\mu}(\sigma)}\abs{C_{\mu'(\kappa)}(\sigma'(\kappa))}\binom{{\mov(\kappa)}-1}{a-1}}{W\abs{S_e}\prod_{\eta\in\Mov(\kappa)}\beta_\eta(\kappa)}\int_{[\Mbar_{\Gamma\setminus\kappa}]'}\left(\frac{\iota_{\Gamma}^*\left(\prod_{j=1}^n\ev_j^*\mathbf{t}(\bar{\psi})\right)}{e_T(N_{\Gamma\setminus\kappa}^{\vir})}\frac{e_T(T_{(\mu'(\kappa),\sigma'(\kappa))}I\Sym^d\P^r)}{(-\bar{\psi}_{n+1}^{\Gamma\setminus\kappa}-w(\kappa))^{{\mov(\kappa)}-a+1}}\right).
  \end{align*}
  Now, summing over all $\Gamma$ of type \ref{item:recursion}
  with associated subtree $\kappa$ yields
  \begin{align}\label{eq:KappaContribution}
    \frac{(-1)^{{\mov(\kappa)}-a}\abs{C_{\mu}(\sigma)}\abs{C_{\mu'(\kappa)}(\sigma'(\kappa))}\binom{{\mov(\kappa)}-1}{a-1}}{W\abs{S_e}\prod_{\eta\in\Mov(\kappa)}\beta_\eta(\kappa)}&\left\langle\mathbf{t}(\bar{\psi}),\ldots,\mathbf{t}(\bar{\psi}),\frac{[(\mu'(\kappa),\sigma'(\kappa))]}{(-\bar{\psi}_{n+1}^{\Gamma\setminus\kappa}-w(\kappa))^{{\mov(\kappa)}-a+1}}\right\rangle_{0,n+1,\beta-\beta(\kappa)}^{\Sym^d\P^r,T}.
  \end{align}
  On the other hand, the coefficient
  $\Laur(\mathbf{f}_{(\mu'(\kappa),\sigma'(\kappa))},(w(\kappa)-z)^{{\mov(\kappa)}-a})$
  is
  \begin{align}\label{eq:OtherSideLaurentCoeff}
    \sum_{\substack{\beta\ge0\\n\ge0}}\frac{\abs{C_{\mu'(\kappa)}(\sigma'(\kappa))}Q^\beta}{n!}\left\langle\mathbf{t}(\bar{\psi}),\ldots,\mathbf{t}(\bar{\psi}),\frac{[(\mu'(\kappa),\sigma'(\kappa))]}{(-\bar{\psi}_{n+1}^{\Gamma\setminus\kappa}-w(\kappa))^{{\mov(\kappa)}-a+1}}\right\rangle_{0,n+1,\beta}^{\Sym^d\P^r,T}
  \end{align}
  We compute
  $\frac{\abs{C_\mu(\sigma)}}{\abs{S_e}\prod_{\eta\in\Mov({\kappa})}\beta_\eta(\kappa)}$
    explicitly:
    \begin{align*}
      \abs{C_\mu(\sigma)}&=\abs{S_\sigma}\prod_{\eta\in\sigma}\eta\\
      \abs{S_e}&=\abs{C_{\Stat}(\kappa)}\abs{S_{\Mov({\kappa})}}=\abs{S_{\Stat(\kappa)}}\abs{S_{\Mov({\kappa})}}\prod_{\eta\in\Stat(\kappa)}\eta\\
      \frac{\abs{C_\mu(\sigma)}}{\abs{S_e}\prod_{\eta\in\Mov({\kappa})}\beta_\eta(\kappa)}&=\frac{\abs{S_\sigma}\prod_{\eta\in\Mov({\kappa})}\eta}{\abs{S_{\Stat({\kappa})}}\abs{S_{\Mov({\kappa})}}\prod_{\eta\in\Mov({\kappa})}\beta_\eta(\kappa)}=\frac{1}{q({\kappa})^{\mov({\kappa})}}\binom{\sigma_{i_1^{\mov}}({\kappa})}{\Mov({\kappa})}
    \end{align*}
    With (\ref{eq:KappaContribution}) and
    (\ref{eq:OtherSideLaurentCoeff}), this proves
    \ref{item:Recursion}.  Note that the contribution from all graphs
    of type \ref{item:recursion} (and the term
    $\mathbf{t}_{(\mu,\sigma)}(z)$) is
  \begin{align}\label{eq:Tau}
    \boldsymbol{\tau}_{(\mu,\sigma)}(z):=\mathbf{t}_{(\mu,\sigma)}(z)+\sum_{\substack{\kappa\in\Upsilon(\mu,\sigma)\\a\le{\mov(\kappa)}}}\frac{Q^{\beta(\kappa)}\RC(\kappa,a)}{(w(\kappa)-z)^a}\Laur(\mathbf{f}_{(\mu'(\kappa),\sigma'(\kappa))},(w(\kappa)-z)^{{\mov(\kappa)}-a}).
  \end{align}

  \bigskip

  The proof of condition \ref{item:TwistedCone} is identical to that
  of condition (C3) in \cite{CoatesCortiIritaniTseng2015}, and we
  reproduce the argument here for convenience.

  Consider a decorated tree $\Gamma$ of type
  \ref{item:initial}. We write $v:=\Mark({n+1})\in V^S(\Gamma).$
  The marked points of $\Mbar_v$ correspond to (1) elements of
  $\Mark^{-1}(v),$ and (2) edges $e\in E(\Gamma,v)$. To $e$ is
  associated a maximal subtree $\Gamma_e$ containing $v$, with
  $E(\Gamma_e,v)=e$. We decorate $\Gamma_e$ so that $\Mark^{-1}(v)=b,$
  and the rest of the decorations inherited from $\Gamma.$ We
  will then write $\Contr(\Gamma)$ in terms of
  $\Contr(\Gamma_e)$ for $e\in E(\Gamma,v),$ and integrals over
  the vertex moduli space $\Mbar_v$.

  We apply \eqref{eq:LocalizationContribution} again. After an \'etale
  base change $\widetilde\Mbar_{\Gamma}\to\Mbar_{\Gamma},$
  we may label the subtrees $\Gamma_e.$ (Write $M$ for the
  degree of this base change.) We then write
  $\widetilde\Mbar_{\Gamma}\cong\Mbar_v\times\prod_{e\in
    E(\Gamma,v)}\Mbar_{\Gamma_e}.$ Now we again apply
  Proposition \ref{prop:VirtualNormalBundle}, to see that
  \begin{align*}
    \frac{1}{e_T(N_{\Gamma}^{\vir})}&=e_T^{-1}(R\pi_*(C_v,f^*T\Sym^d\P^r))\prod_{e\in
      E(\Gamma,v)}\frac{r(v,e)e_T(T_{(\mu,\Mon(v,e))}I\Sym^d\P^r)}{(-\bar{\psi}^{\Mbar_v}_e-\bar{\psi}^{\Mbar_e}_v)e_T(N_{\Gamma_e}^{\vir})}
  \end{align*}
  Observe that
  $\frac{e_T(T_{(\mu,\Mon(v,e))}I\Sym^d\P^r)}{(-\bar{\psi}^{\Mbar_v}_e-\bar{\psi}^{\Mbar_e}_v)}$
  is the insertion at $b$ in
  $\Contr(\Gamma_e)|_{z\mapsto\bar{\psi}^{\Mbar_v}_e}.$ Thus
  \begin{align*}
    \Contr(\Gamma)&=\frac{1}{M}\int_{\Mbar_v}\left(\prod_{e\in
        E(\Gamma,v)}\abs{C_{\mu}(\sigma)}Q^{\beta(\Gamma_e)}\Contr(\Gamma_e)|_{z\mapsto\bar{\psi}_e^{\Mbar_v}}\right)\cup\left(\prod_{i\in\Mark^{-1}(v)}\mathbf{t}(\bar{\psi}_i)\right)\\
    &\quad\quad\quad\quad\cup\frac{e_T(T_{(\mu,\sigma)}I\Sym^d\P^r)}{-z-\bar{\psi}_{n+1}}\cup
    e_T^{-1}(R\pi_*(C_v,f^*T\Sym^d\P^r)).
  \end{align*}
  This is almost a
  twisted Gromov-Witten invariant of $\VEval(v),$ but not quite, since
  there are restrictions on the monodromies at the marked
  points. Summing over $\Gamma_e$ for a single $e$, with
  everything else fixed, gives the insertion
  $\boldsymbol{\tau}_{(\mu,\Mon(v,e))}(\psi),$ where the initial term
  comes from replacing $\Gamma_e$ with a marked point. Thus
  summing over all $\sigma$, and over all $\Gamma$ of type
  \ref{item:initial}, gives
  \begin{align*}
    \sum_{m=2}^\infty\sum_\sigma\frac{1}{m!}\left\langle\boldsymbol{\tau}_{\mu}(\bar{\psi}),\ldots,\boldsymbol{\tau}_{\mu}(\bar{\psi}),\frac{[(\mu,\sigma)]}{-z-\bar{\psi}_{n+1}}\right\rangle_{0,m+1,0}^{\VEval(v),T,\tw}1_{(\mu,\sigma)}\in
    H^*_{T,\loc}(I\mu),
  \end{align*}
  where $1_{(\mu,\sigma)}$ is the fundamental class of
  $(\mu,\sigma)\in I\mu,$ and
  $\boldsymbol{\tau}_{\mu}(z)=\sum_{\sigma'\in\MultiPart(\mu)}\boldsymbol{\tau}_{(\mu,\sigma')}(z)1_{(\mu,\sigma)}$. Adding
  in the contributions from type \ref{item:recursion} graphs, summing
  \eqref{eq:RestrictionOfF} over $\sigma$ yields:
  \begin{align*}
    \mathbf{f}_\mu&=\sum_{\sigma}\mathbf{f}_{(\mu,\sigma)}1_{\mu,\sigma}=-1_\mu
    z+\boldsymbol{\tau}_{\mu}(z)+\sum_{m=2}^\infty\sum_\sigma\frac{1}{m!}\left\langle\boldsymbol{\tau}_{\mu}(\bar{\psi}),\ldots,\boldsymbol{\tau}_{\mu}(\bar{\psi}),\frac{[(\mu,\sigma)]}{-z-\bar{\psi}_{n+1}}\right\rangle_{0,m+1,0}^{\VEval(v),T,\tw}1_{(\mu,\sigma)},
  \end{align*}
  where $1_\mu$ is the untwisted fundamental class on $I\mu$. This
  shows that $\mathbf{f}_\mu$ is a $\Lambda_{\nov}^T[[x]]$-valued
  point of $\mathcal{L}_\mu^{\tw}.$

  \bigskip

  The converse also requires no modification from
  \cite{CoatesCortiIritaniTseng2015}. Suppose $\mathbf{f}$ satisfies
  the conditions of the theorem. By conditions \ref{item:Poles} and
  \ref{item:Recursion}, we may uniquely write
  \begin{align*}
    \mathbf{f}_\mu=-1_\mu
    z+\sum_{\sigma\in\MultiPart(\mu)}\boldsymbol{\tau}_{(\mu,\sigma)}1_{(\mu,\sigma)}+O(z^{-1}),
  \end{align*}
  where $\boldsymbol{\tau}_{(\mu,\sigma)}(z)$ is the expression in
  \eqref{eq:Tau}, for some
  $\mathbf{t}_{(\mu,\sigma)}(z)\in\iota_\mu^*(\H^+)[[x]].$ We claim
  that the set $\{\mathbf{t}_{(\mu,\sigma)}(z)\}$ for all fixed points
  $(\mu,\sigma)$ determines $\mathbf{f}.$ By the localization
  isomorphism, if suffices to show that it determines
  $\mathbf{f}_{(\mu,\sigma)}$ for all $(\mu,\sigma)$. We induct on the degree
  $\beta+k,$ where $k$ is the exponent of $x$. The base case
  $\beta=k=0$ is taken care of by the assumption
  $\mathbf{f}|_{Q=x=0}=-1z.$ Assume the coefficients of
  $\mathbf{f}_{(\mu,\sigma)}$ up to degree $\beta+k$ are determined by
  $\{\mathbf{t}_{(\mu,\sigma)}\}$. Consider the coefficients of degree
  $\beta+k+1.$ Some of these appear in $\mathbf{t}(z)$, but these are
  given. Some of them appear in $\boldsymbol{\tau}_{(\mu,\sigma)}(z)$,
  but these are determined since they are of the form:
  $Q^{\beta(\kappa)}$ multiplied by a factor determined by the
  inductive hypothesis. The sum of all of these terms is in
  $H^*_{CR,T,\loc}(\mu)[[Q,x]][[z]].$

  Finally, some of them appear in $O(z^{-1}).$ However, condition
  \ref{item:TwistedCone} and \eqref{eq:TwistedConeDef} show that these
  are determined by terms of $-1z+\boldsymbol{\tau}_{(\mu,\sigma)}(z)$
  of degree at most $\beta+k+1.$ Since all such terms are determined
  by $\mathbf{t}_{(\mu,\sigma)}$ and induction, the degree $\beta+k+1$
  coefficients of $\mathbf{f}_{(\mu,\sigma)}$ are determined. Thus in
  fact $\mathbf{f}$ is determined by
  $\{\mathbf{t}_{(\mu,\sigma)}(z)\}$.

  Again by the localization isomorphism, the set
  $\{\mathbf{t}_{(\mu,\sigma)}(z)\}$ corresponds uniquely to an
  element $\mathbf{t}(z)\in\H^+[[x]]$ that restricts to each
  $\mathbf{t}_{(\mu,\sigma)}(z).$ This in turn corresponds uniquely to
  a $\Lambda_{\nov}^T[[x]]$-valued point $\mathbf{f}_{\mathrm{GW}}$ of
  $\mathcal{L}_x.$ By the uniqueness argument above we have
  $\mathbf{f}=\mathbf{f}_{\mathrm{GW}}$.
\end{proof}
\begin{rem}
  No modifications are required to replace $\Lambda_{\nov}^T[[x]]$ in
  the statement of Theorem \ref{thm:ConeCharacterization} with a
  finitely generated graded $\Lambda_{\nov}^T$-algebra.
\end{rem}

\section{The \texorpdfstring{$I$}{I}-function and mirror theorem}\label{sec:MirrorTheorem}
In this section we introduce a function
$I_{\Sym^d\P^r}(Q,t,\mathbf{x},-z)$, and show that it satisfies the
conditions of Theorem \ref{thm:ConeCharacterization}, conditional upon two combinatorial identities that we checked extensively by computer, but were unable to prove. (See Section \ref{sec:Appendix}.) That is, we prove
that these identities imply $I_{\Sym^d\P^r}(Q,t,\mathbf{x},-z)$ is a
$\Lambda_{\nov}^T[[t,\mathbf{x}]]/(\mathbf{x})^2$-valued point of
$\mathcal{L}_{\Sym^d\P^r},$ where $\mathbf{t}=\{t_0,\ldots,t_r\}$ and
$\mathbf{x}=\{x_\pi\}_{\pi\in\Part(d)}$ are formal variables.

  The (first order) $I$-function
  $I_{\Sym^d\P^r}(Q,\mathbf{t},\mathbf{x},z)$ is
  defined
  by its restrictions to the
  $T$-fixed points $\iota_{\mu,\sigma}:(\mu,\sigma)\into I\Sym^d\P^r$ as follows:
  \begin{align}\label{eq:IDef}
    \iota_{(\mu,\sigma)}^*I_{\Sym^d\P^r}(Q,t,\mathbf{x},z)&:=\left(z\delta_{\sigma,(1,\ldots,1)}1_\sigma+x_{\pi(\sigma)}1_\sigma\right)                                   \sum_{\beta\ge0}\exp\left(\sum_{i=0}^rt_i\left(\beta+\sum_{j=0}^r\mu_j(\alpha_j-\alpha_i)/z\right)\right)Q^\beta\nonumber\\
                                                          &\quad\quad\cdot\sum_{\substack{(L_\eta)_{\eta\in\sigma}\\L_\eta\ge0\\\sum
    L_\eta=\beta}}
    \left(\prod_{j=0}^r\prod_{\eta\in\sigma_j}\frac{1}{\prod_{\gamma=1}^{L_\eta}\prod_{i=0}^r\left(\alpha_j-\alpha_i+\frac{\gamma}{\eta}z\right)}\right),
  \end{align}
  where $1_\sigma$ is the fundamental class of $(\mu,\sigma)\in I\mu$. We will use the notations $$I_{(\mu,\sigma)}(Q,\mathbf{t},\mathbf{x},z):=\iota_{(\mu,\sigma)}^*I_{\Sym^d\P^r}(Q,\mathbf{t},\mathbf{x},z)$$ and $$I_\mu(Q,\mathbf{t},\mathbf{x},z):=\bigoplus_{\sigma\in\MultiPart(\mu)}I_{(\mu,\sigma)}(Q,\mathbf{t},\mathbf{x},z).$$
  \begin{rem}
As in Remark \ref{rem:TPowerSeries}, we have $I_{\Sym^d\P^r}(Q,\mathbf{t},\mathbf{x},z)\not\in
  H^*(I\Sym^d\P^r,\Q)[[Q,\mathbf{t},\mathbf{x}]]((z^{-1}))$ due to the presence of arbitrarily high powers of $z.$ The ``topological nilpotence'' condition we alluded to is simply to say that for a fixed monomial in the variables $t_i$ and $x_\pi$, the powers of $z$ in that monomial's coefficient are bounded above.
\end{rem}
\begin{rem}
$I_{\Sym^d\P^r}$ may be decomposed into pieces that we might naturally regard as $I_{\Sym^d\C^r}.$ If we introduce variables $x_{j,\pi}$, where $0\le j\le r$ and $\pi$ is a partition of $\mu_j$, and set $\prod_{0\le j\le r}x_{j,\pi_j}=x_{\cup\pi_j},$ then we may repeatedly switch orders of summation and products to write 
\begin{align}\label{ISplitUp}
  I_\mu(Q,\mathbf{t},\mathbf{x},z)=I_\mu(Q,\mathbf{t},0,z)+\prod_{j=0}^rI_{j,\mu_j}(Q,\mathbf{t},\mathbf{x},z),
  \end{align}
  where
  \begin{align*}
    I_{j,d}(Q,\mathbf{t},\mathbf{x},z)&=\sum_{\pi\in\Part(d)}x_{j,\pi}1_{j,\pi}\prod_{\eta\in\pi}\sum_{\beta\ge0}\frac{Q^\beta
                                        \exp\left(\sum_{i=0}^rt_i(\beta+\eta(\alpha_j-\alpha_i)/z)\right)}{\prod_{i=0}^r\prod_{\gamma=1}^{\beta}(\alpha_j-\alpha_i+\frac{\gamma}{\eta}z)}.
  \end{align*}
  Here $1_{j,\pi}$ is the pullback of $1_\pi$ along the natural isomorphism $IBG_\mu\to\prod_{j=0}^rIBS_{\mu_j}.$
%
\end{rem}
  We now prove:
  \begin{thm}\label{thm:MirrorTheorem}
  Assuming Identities \ref{lem:WhatIsTauIdentity} and \ref{lem:WhatIsTauSigma} hold, $I_{\Sym^d\P^r}(Q,\mathbf{t},\mathbf{x},-z)$ is a
  $\Lambda_{\nov}^T[[\mathbf{t},\mathbf{x}]]/(\mathbf{x}^2)$-valued point of
  $\mathcal{L}_{\Sym^d\P^r}.$
\end{thm}
\begin{rem}
  This result is weaker than that in the original preprint, where
  $\Lambda_{\nov}^T[[\mathbf{t},\mathbf{x}]]$ appeared without the quotient by
  the ideal $(\mathbf{x})^2,$ and the dependence on Identities \ref{lem:WhatIsTauIdentity} and \ref{lem:WhatIsTauSigma} was omitted. We do not know if it is possible to
  find an explicit formula for a (nontrivial)
  $\Lambda_{\nov}^T[[t,\mathbf{x}]]$-valued point of
  $\mathcal{L}_{\Sym^d\P^r}$.
\end{rem}
\begin{proof}
  We must prove that the criteria in Theorem
  \ref{thm:ConeCharacterization} are satisfied. The form of \eqref{eq:IDef} implies that the coefficient of $Q^\beta
  x_\pi\mathbf{t}^{\mathbf{a}}$ is a rational function in $z$ with
  poles at $z=0,$ $z=\infty$, and $z=\frac{\alpha_{i_1}-\alpha_{i_2}}{q},$
  where $i_1=i(\eta)$ for some $\eta\in\sigma,$ and
  $q\in\frac{1}{\eta}\Z.$ This is exactly the set of values arising as
  $w(\kappa)$ for
  $\kappa\in\Upsilon(\mu,\sigma)$. This proves \ref{item:Poles}.

  To prove \ref{item:Recursion}, 
 we fix $\mu\in\ZPart(d,r+1),$ $\sigma\in\MultiPart(\mu),$
   $\beta\ge0,$ $L=(L_\eta)_{\eta\in\sigma}$ as in \eqref{eq:IDef}, $a\in\Z_{>0},$ distinct elements $i_1,i_2\in\{0,\ldots,r\}$ such
  that $\mu_{i_1}\ne0$, and $q\in\Q$ such that $q\in\frac{1}{\eta}\Z$
  for some $\eta\in\sigma_{i_1}$. Let
  $w=\frac{\alpha_{i_1}-\alpha_{i_2}}{q}$. 
  
%
  First, assume that $\sigma$ is not the trivial multipartition of $\mu$. The term of $I_{(\mu,\sigma)}(Q,\mathbf{t},\mathbf{x},-z)$ corresponding to $L$ is $T_{\sigma,L}(z)x_{\pi(\sigma)}1_\sigma Q^\beta,$ where:
  \begin{align}\label{TL}
  T_{\sigma,L}(z)&=\prod_{j=0}^r\prod_{\eta\in\sigma_j}H_{L_\eta,j,\eta}(z)
  \end{align}
  and
  \begin{align}\label{HL}
  H_{\beta,j,\eta}(z)&=\frac{\exp\left(\sum_{i=0}^r t_i\left(\beta+\eta(\alpha_j-\alpha_i)/(-z)\right)\right)}{\prod_{\gamma=1}^{\beta}\prod_{i=0}^r\left(\alpha_j-\alpha_i-\frac{\gamma}{\eta}z\right)}
  \end{align}
  
  Let
  $\sigma_L=\{\eta\in\sigma_{i_1}:L_\eta\ge q\eta\}$, and recall Notation \ref{not:WeightOfKappa}. Given a nonempty
  submultiset $M\subseteq\sigma_L,$ there is a unique
  $\kappa_M\in\Upsilon(\mu,\sigma)$ such that $w(\kappa_M)=w$ and
  $\Mov({\kappa})=M,$ and we may define 
  $L'({\kappa_M})=(L'(\kappa_M)_\eta)_{\eta\in\sigma'({\kappa_M})}$ by letting $L'(\kappa_M)_\eta=L_\eta-q\eta$ for
  $\eta\in M$. Note that such $\eta$ are parts of
  $\sigma_{i_2}'({\kappa_M})$, and that we have $\sum_{\eta\in\sigma'(\kappa_M)}L'(\kappa_M)_\eta=\beta-\beta(\kappa_M).$ Therefore, to prove \ref{item:Recursion}, it is sufficient to show that
  \begin{align}\label{eq:SingleLabel}
  \Laur(T_{\sigma,L}(z),(w-z)^{-a})&=\sum_{\substack{M\subseteq\sigma_L\\\abs{M}\ge a}}\RC(\kappa_M,a)\Laur(T_{\sigma'(\kappa_M),L'(\kappa_M)}(z),(w-z)^{\abs{M}-a}),
  \end{align}
  since adding up \eqref{eq:SingleLabel} over all $L$ and $\beta$ yields \eqref{eq:LaurentRecursion}.
   Note that $H_{L_\eta,j,\eta}(z)$ has a pole at $w$ if and only if $j=i_1$ and $\eta\in\sigma_L$, and in this case  $H_{L_\eta,j,\eta}(z)$ has a simple pole at $w$, coming from the factor $(\gamma,i)=(q\eta,i_2)$ in the denominator. 
  Thus $T_{\sigma,L}(z)$ has a pole at $w$ of order exactly
  $\abs{\sigma_L}$. Define $$\tilde{H}_{\mu,\sigma,L,j,\eta}(z)=\begin{cases}
   (w-z)H_{L_\eta,j,\eta}(z)&j=i_1,\eta\in\sigma_L\\
   H_{L_\eta,j,\eta}(z)&\text{else}.
   \end{cases}
   $$ If $a>\abs{\sigma_L},$ then both sides of
  \eqref{eq:SingleLabel} are zero, so assume $a\le\abs{\sigma_L}$. By the product rule, the
  left side of \eqref{eq:SingleLabel} is equal to
  \begin{align}\label{eq:LeftSideLaurent}
    \frac{1}{(\abs{\sigma_L}-a)!}\left(\deriv[{\abs{\sigma_L}-a}]{}{(w-z)}(w-z)^{\abs{\sigma_L}}T_{\sigma,L}(z)\right)_{z\mapsto
    w}&=\sum_{\substack{(k_{(j,\eta)})_{0\le j\le r,\eta\in\sigma_j}\\\sum_{j,\eta}k_{(j,\eta)}=\abs{\sigma_L}-a}}\prod_{j=0}^r\prod_{\eta\in\sigma_j}\frac{\tilde H_{L_\eta,j,\eta}^{(k_{(j,\eta)})}(w)}{k_{(j,\eta)}!}.
      \end{align}

Similarly, the right side of
  \eqref{eq:SingleLabel} is equal to
\begin{align}\label{eq:RightSideLaurent}
    &\sum_{\substack{M\subseteq\sigma_L\\\abs{M}\ge a}}\RC({\kappa_M},a)\sum_{\substack{(k_{(j,\eta)})_{0\le j\le r,\eta\in\sigma_j'(\kappa_M)}\\\sum_{j,\eta}k_{(j,\eta)}=\abs{\sigma_L}-a}}\prod_{j=0}^r\prod_{\eta\in\sigma'_j(\kappa_M)}\frac{\tilde H_{L'(\kappa_M)_\eta,j,\eta}^{(k_{(j,\eta)})}(w)}{k_{(j,\eta)}!}.
  \end{align}
  We may switch the order of summation in \eqref{eq:RightSideLaurent}, using the natural bijection between the parts of $\sigma$ and $\sigma_{j_i}'(\kappa_M).$ Note that this bijection identifies the parts of $M\subseteq\sigma_{i_1}$ with parts of $\sigma_{i_2}'(\kappa_M).$ For $\eta\not\in M$ we have $L'(\kappa_M)_\eta=L_\eta$, so the result is:
   \begin{align}\label{eq:SwitchedSummation}
\sum_{\substack{(k_{(j,\eta)})_{0\le j\le r,\eta\in\sigma_j}\\\sum_{j,\eta}k_{(j,\eta)}=\abs{\sigma_L}-a}}\sum_{\substack{M\subseteq\sigma_L\\\abs{M}\ge a}}\RC({\kappa_M},a)&\Biggl(\prod_{\substack{0\le j\le r\\ \eta\in\sigma_j\\\eta\not\in M}}\frac{\tilde H_{L_\eta,j,\eta}^{(k_{(j,\eta)})}(w)}{k_{(j,\eta)}!}\Biggr)\prod_{\eta\in M}\frac{\tilde H_{L_\eta-q\eta,i_2,\eta}^{(k_{(j,\eta)})}(w)}{k_{(i_2,\eta)}!}\\
=\sum_{\substack{(k_{(j,\eta)})_{0\le j\le r,\eta\in\sigma_j}\\\sum_{j,\eta}k_{(j,\eta)}=\abs{\sigma_L}-a}}\sum_{\substack{M\subseteq\sigma_L\\\abs{M}\ge a}}(-1)^{\abs{M}-a}&\binom{\sigma_{i_1}}{M}\binom{\abs{M}-1}{a-1}\Biggl(\prod_{\substack{0\le j\le r\\ \eta\in\sigma_j\\\eta\not\in M}}\frac{\tilde H_{L_\eta,j,\eta}^{(k_{(j,\eta)})}(w)}{k_{(j,\eta)}!}\Biggr)\nonumber\\
&\cdot\prod_{\eta\in M}\frac{\tilde H_{L_\eta-q\eta,i_2,\eta}^{(k_{(j,\eta)})}(w)}{k_{(i_2,\eta)}!\cdot q\cdot\prod_{\substack{0\le i\le r\\1\le\gamma\le q\eta\\(\gamma,i)\ne(q\eta,i_2)}}\left(\frac{q\eta-\gamma}{q\eta}\alpha_{i_1}+\frac{\gamma}{q\eta}\alpha_{i_2}-\alpha_i\right)}.\nonumber
  \end{align}

  Consider a single summand $S(k_{(j,\eta)})$ of the leftmost sum in \eqref{eq:SwitchedSummation}. Fix a subset $$U\subseteq\sigma_L\cap\{(j,\eta):k_{(j,\eta)}>0\},$$ and consider the contribution $S((k_{(j,\eta)});U)$ to $S(k_{(j,\eta)})$ from all $M$ such that $$M\cap\{(j,\eta):k_{(j,\eta)}>0\}=U.$$ By definition, we have $S(k_{(j,\eta)})=\sum_{U\subseteq\sigma_L\cap\{(j,\eta):k_{(j,\eta)}>0\}}S((k_{(j,\eta)});U).$ Explicitly, 
  \begin{align}\label{eq:UNonempty}
S((k_{(j,\eta)});U)&=\sum_{\substack{U\subseteq M\subseteq\sigma_L\\\abs{M}\ge a}}(-1)^{\abs{M}-a}\binom{\sigma_{i_1}}{M}\binom{\abs{M}-1}{a-1}\Biggl(\prod_{\substack{0\le j\le r\\ \eta\in\sigma_j\\\eta\not\in M}}\frac{\tilde H_{L_\eta,j,\eta}^{(k_{(j,\eta)})}(w)}{k_{(j,\eta)}!}\Biggr)\nonumber\\
&\quad\cdot\Biggl(\prod_{\eta\in U}\frac{\tilde H_{L_\eta-q\eta,i_2,\eta}^{(k_{(j,\eta)})}(w)}{k_{(j,\eta)}!\cdot q\cdot\prod_{\substack{0\le i\le r\\1\le\gamma\le q\eta\\(\gamma,i)\ne(q\eta,i_2)}}\left(\frac{q\eta-\gamma}{q\eta}\alpha_{i_1}+\frac{\gamma}{q\eta}\alpha_{i_2}-\alpha_i\right)}\Biggr)\\
&\quad\quad\cdot\Biggl(\prod_{\eta\in M\setminus U}\frac{\tilde H_{L_\eta-q\eta,i_2,\eta}(w)}{k_{(j,\eta)}!\cdot q\cdot\prod_{\substack{0\le i\le r\\1\le\gamma\le q\eta\\(\gamma,i)\ne(q\eta,i_2)}}\left(\frac{q\eta-\gamma}{q\eta}\alpha_{i_1}+\frac{\gamma}{q\eta}\alpha_{i_2}-\alpha_i\right)}\Biggr).\nonumber
  \end{align}
  From \eqref{HL}, a simple direct computation using the two identities
    \begin{align}\label{eq:ExponentialFactorsGoAway}
    L_\eta+\mu_{i_1}\frac{\alpha_{i_1}-\alpha_i}{-w}+\mu_{i_2}\frac{\alpha_{i_2}-\alpha_i}{-w}
                             &=(L_\eta-q\eta)+(\mu_{i_1}-q\eta)\frac{\alpha_{i_1}-\alpha_i}{-w}+(\mu_{i_2}+q\eta)\frac{\alpha_{i_2}-\alpha_i}{-w}.
  \end{align}
and 
  \begin{align}\label{eq:identifyfactors}
      \alpha_{i_2}-\alpha_i-\frac{\gamma}{\eta}w=\alpha_{i_1}-\alpha_i-\frac{\gamma+q\eta}{\eta}w.
  \end{align}
  shows that 
\begin{align}\label{eq:KeyEquivalence}
\prod_{\eta\in M\setminus U}\frac{\tilde H_{L_\eta-q\eta,i_2,\eta}(w)}{k_{(j,\eta)}!\cdot q\cdot\prod_{\substack{0\le i\le r\\1\le\gamma\le q\eta\\(\gamma,i)\ne(q\eta,i_2)}}\left(\frac{q\eta-\gamma}{q\eta}\alpha_{i_1}+\frac{\gamma}{q\eta}\alpha_{i_2}-\alpha_i\right)}=\prod_{\eta\in M\setminus U}\frac{\tilde H_{L_\eta,i_1,\eta}(w)}{k_{(i_1,\eta)}!}.
\end{align}
(Specifically, \eqref{eq:ExponentialFactorsGoAway} is used to show that the product of exponential factors appearing on both sides of \eqref{eq:KeyEquivalence} are identical, and \eqref{eq:identifyfactors} is used to show that for each $\eta,$ the corresponding products of factors $(\alpha_j-\alpha_i-\frac{\gamma}{\eta}z)$ on each side of \eqref{eq:KeyEquivalence} are identical. The factor $\frac{1}{q}$ matches with $\frac{(w-z)}{\alpha_{i_1}-\alpha_i-\frac{\gamma}{\eta}z}$, where $(\gamma,i)=(q\eta,i_2)$, on the right side of \eqref{eq:KeyEquivalence}.)

By \eqref{eq:KeyEquivalence}, the product expressions in \eqref{eq:UNonempty} are independent of $M$, i.e. we may rewrite \eqref{eq:UNonempty}:
\begin{align}\label{eq:IndepOfM}
S((k_{(j,\eta)});U)&=\Biggl(\prod_{\substack{0\le j\le r\\ \eta\in\sigma_j\\\eta\not\in U}}\frac{\tilde H_{L_\eta,j,\eta}^{(k_{(j,\eta)})}(w)}{k_{(j,\eta)}!}\Biggr)\Biggr(\prod_{\eta\in U}\frac{\tilde H_{L_\eta-q\eta,i_2,\eta}^{(k_{(j,\eta)})}(w)}{k_{(j,\eta)}!\cdot q\cdot\prod_{\substack{0\le i\le r\\1\le\gamma\le q\eta\\(\gamma,i)\ne(q\eta,i_2)}}\left(\frac{q\eta-\gamma}{q\eta}\alpha_{i_1}+\frac{\gamma}{q\eta}\alpha_{i_2}-\alpha_i\right)}\Biggr)\\
&\quad\quad\cdot\Biggl(\sum_{\substack{U\subseteq M\subseteq\sigma_L\\\abs{M}\ge a}}(-1)^{\abs{M}-a}\binom{\sigma_{i_1}}{M}\binom{\abs{M}-1}{a-1}\Biggr)\nonumber.
\end{align}
The last sum in \eqref{eq:IndepOfM} is equal to $$\sum_{m=0}^{\abs{\sigma_L}-a}(-1)^{m}\binom{m+a-1}{a-1}\binom{\abs{\sigma_L}-\abs{U}}{m+a-\abs{U}}=\sum_{m=0}^{\abs{\sigma_L}-a}\binom{-a}{m}\binom{\abs{\sigma_L}-\abs{U}}{\abs{\sigma_L}-a-m}=\binom{\abs{\sigma_L}-a-\abs{U}}{\abs{\sigma_L}-a},$$ where we have used the Chu-Vandermonde identity (as well as the usual conventions for binomial coefficients with negative first argument). Thus $S((k_{(j,\eta)});U)=0$ for $U\ne\emptyset$, i.e. $$S(k_{(j,\eta)})=S((k_{(j,\eta)});\emptyset)=\prod_{\substack{0\le j\le r\\ \eta\in\sigma_j}}\frac{\tilde H_{L_\eta,j,\eta}^{(k_{(j,\eta)})}(w)}{k_{(j,\eta)}!}.$$ This is precisely to say that \eqref{eq:SwitchedSummation} and \eqref{eq:LeftSideLaurent} agree, proving \ref{item:Recursion} in the case when $\sigma$ is nontrivial.

Lastly, we treat the case where $\sigma$ is the trivial multipartition of $\mu$, so $I_{(\mu,\sigma)}$ has a factor $z+x_{\sigma}.$ We must therefore also prove the analog of \eqref{eq:SingleLabel} for $zT_{\sigma,L}(z).$ Very little modification is required. In fact, our argument never mentioned the exact form of $H_{\beta,j,\eta}(z)$ for $\eta\not\in\sigma_L$ --- using this, it is easy to see that \emph{any} multiple $g(z)T_{\sigma,L}(z)$ satisfies \eqref{eq:SingleLabel}. This completes the proof of \ref{item:Recursion}.

  \medskip

  Finally, we prove \ref{item:TwistedCone} using Tseng's orbifold
  quantum Riemann-Roch (OQRR) operator. It is sufficient to prove the
  statement for the specializations $I_\mu(Q,0,\mathbf{x},-z)$, since
  (1) $Q$ may be rescaled to absorb $e^{t_i\beta},$ and (2) the string
  equation shows that $\mathcal{L}_\mu^{\tw}$ is invariant under
  multiplication by $e^{-\sum_{j=0}^rt_i\mu_j(\alpha_j-\alpha_i)/z}.$

  The OQRR operator is expressed in terms of the
  tangent bundle $F=T_\mu\Sym^d\P^r$ over
  $\mu=(\mu_0,\ldots,\mu_r)$. Note that $F$ splits into subbundles
  $F_{j,i},$ where $0\le i,j\le r$ and $i\ne j;$ here $F_{j,i}$
  consists of tangent vectors along which the $\mu_j$ points at the
  coordinate point $P_j\in\P^r$ move along the coordinate line
  $L_{(j,i)}$. 
  Note that
  $(t_0,\ldots,t_r)\in(\C^*)^{r+1}$ acts on $F_{j,i}$ by
  multiplication by $t_i/t_j.$ The isotropy group $G_\mu$ acts on
  $F_{j,i}$, and may prevent it from decomposing further.

  For each multipartition $\sigma=(\sigma_0,\ldots,\sigma_r)$ of
  $\mu$, let $F_{j,i,\sigma}$ denote the pullback of $F_{j,i}$ along
  $\sigma\into I\mu\to\mu.$\footnote{In fact $F_{j,i,\sigma}$ splits further, with a subbundle for each distinct integer appearing in $\sigma_j.$ We will not need this splitting directly, but it is related to the choice of variables in the proof of Claim \ref{cl:OnUntwisted} below.} In \cite{Tseng2010}, one must describe,
  for each $q\in\Q,$ the eigenbundle $F_{j,i,\sigma}^q$ of
  $F_{j,i,\sigma}$ on which a representative
  $\alpha=(\alpha_0,\ldots,\alpha_r)\in G_\mu\cong
  S_{\mu_0}\times\cdots\times S_{\mu_r}$ of $\sigma$ acts with
  eigenvalue $q\abs{G_\mu}$. By definition of $F_{j,i},$ $\alpha$ acts
  by a permutation matrix associated to the cycle type of $\alpha_j;$
  the eigenvalues are therefore
  $\bigsqcup_{\eta\in\sigma_j}\{1,e^{2\pi i/\eta}\ldots,e^{2\pi
    i(\eta-1)/\eta}\}.$ Equivalently,
  \begin{align}\label{eq:EigenvalueCalculation}
  \ch_k(F_{j,i,\sigma}^{(q\lcm(\sigma_j))})&=
                              \begin{cases}
                                \#\{\eta\in\sigma_j:q\in\frac{1}{\eta}\Z\}&k=0\\
                                0&k>0,
                              \end{cases}
  \end{align}

  Define a collection of formal variables $\mathbf{s}=(s_k^{(j,i)})$ for $0\le i,j\le r,$ $i\ne j$, and $k\ge0$. These define a family of multiplicative
  characteristic classes
  $$\mathbf{c}_{\mathbf{s}}(F)=\prod_{\substack{0\le i\le r\\i\ne j}}\exp\left(\sum_{k\ge0}s_k^{(j,i)}\ch_k(F_{j,i})\right),$$
  with the specialization
  \begin{align}\label{eq:sjik}
    s^{(j,i)}_k=
    \begin{cases}
      -\log(\alpha_j-\alpha_i)&k=0\\
      (-1)^{k}(k-1)!(\alpha_j-\alpha_i)^{-k}&k\ge1
    \end{cases}
  \end{align}
  giving the equivariant Euler class
  $\mathbf{c}_{\mathbf{s}}(F)=e_T(F)$ (see
  \cite{ChiodoRuan2010}, Lemma 4.1.2). Under this specialization,
  $\mathbf{s}^{(j,i)}(x):=\sum_{k\ge0}s_k^{(j,i)}\frac{x^k}{k!}$
  satisfies $\exp(\mathbf{s}^{(j,i)}(x))=(\alpha_j-\alpha_i+x)^{-1}.$

Let $B_m$ denote the $m$th Bernoulli polynomial; recall that $B_m(0)$ is the $m$-th Bernoulli number. The OQRR operator for
  $F=\bigoplus_{j\ge0}^r\bigoplus_{\substack{0\le i\le r\\i\ne
      j}}F_{j,i}$ is
  \begin{align*}
    \Delta&=\bigoplus_{\sigma\in\MultiPart(\mu)}\exp\left(\sum_{j=0}^r\sum_{\substack{0\le
    i\le r\\i\ne
    j}}\sum_{\substack{q\in\Q\cap[0,1)}}\sum_{k\ge0}\sum_{m\ge0}s^{(j,i)}_k\frac{B_m(q)}{m!}\ch_{k+1-m}(F_{j,i,\sigma}^{(q)})z^{m-1}\right)\\
    &=\bigoplus_{\sigma\in\MultiPart(\mu)}\exp\left(\sum_{\substack{0\le i,j\le r\\i\ne j}}\sum_{\eta\in\sigma_j}\sum_{\ell=0}^{\eta-1}\sum_{m\ge1}s^{(j,i)}_{m-1}\frac{B_m(\ell/\eta)}{m!}z^{m-1}\right)\\
    &=\bigoplus_{\sigma\in\MultiPart(\mu)}\exp\left(\sum_{\substack{0\le i,j\le r\\i\ne j}}\sum_{\eta\in\sigma_j}\sum_{m\ge1}s^{(j,i)}_{m-1}\frac{B_m(0)}{m!}(z/\eta)^{m-1}\right),
  \end{align*}
  where the second equality is by \eqref{eq:EigenvalueCalculation} and the third equality is from
  the following identity, easily proved via generating functions of Bernoulli polynomials:
  $$\sum_{0\le\ell\le\eta-1}B_m(\ell/\eta)=\frac{B_m(0)}{\eta^{m-1}}.$$  
  Let
  \begin{align}\label{eq:GDef}
    G^{(j,i)}(x,z)&=\sum_{n,m\ge0}s^{(j,i)}_{n+m-1}\frac{B_m(0)}{m!}\frac{x^n}{n!}z^{m-1},
  \end{align}
  so that
  \begin{align*}
    \Delta&=\bigoplus_{\sigma\in\MultiPart(\mu)}\exp\left(\sum_{\substack{0\le
    i,j\le r\\i\ne j}}\sum_{\eta\in\sigma_j}G^{(j,i)}(0,z/\eta)\right).
  \end{align*}

  By definition of the Bernoulli polynomials, the coefficient of
  $s_k^{(j,i)}$ in $G^{(j,i)}(x,z)$ is the degree $k$ part of
  $\frac{e^x}{e^z-1}.$ For $a\in\C,$ the equation
  \begin{align*}
    \frac{e^{x+az}}{e^{az}-1}=\frac{e^x}{e^{az}-1}+e^x
  \end{align*}
  implies the functional equation
  $G^{(j,i)}(x+az,az)-G^{(j,i)}(x,az)=\mathbf{s}^{(j,i)}(x)$. Applying
  this repeatedly shows that (after the specialization
  \eqref{eq:sjik}) we have
  $I_{\mu}(Q,0,\mathbf{x},-z)=\Delta\left(I^{\untw}_{\mu}(Q,\mathbf{x},\mathbf{s},-z)\right),$ where
  \begin{align*}
    I^{\untw}_{\mu}(Q,\mathbf{x},\mathbf{s},-z)
    &=\sum_{\sigma\in\MultiPart(\mu)}\left(z\delta_{\sigma,(1,\ldots,1)}1_\sigma+x_{\pi(\sigma)}1_\sigma\right)\prod_{j=0}^r\prod_{\eta\in\sigma_j}\sum_{\beta\ge0}\frac{Q^\beta\eta^\beta}{\beta!(-z)^{\beta}}
                                      \exp\left(-\sum_{\substack{0\le
                                      i\le r\\i\ne
    j}}G^{(j,i)}(-\beta z/\eta,z/\eta)\right).
  \end{align*}
  Let $\nu(j)=(0,\ldots,0,1,0,\ldots,0)\in\ZPart(1,r+1)$, where the 1 is in the $j$-th position, and let $\rho(j)$ be the unique element of $\MultiPart(\nu(j)).$ Note the relationship: 
  \begin{align}\label{eq:RecursiveExpressionForI}
  I^{\untw}_{\mu}(Q,\mathbf{x},\mathbf{s},-z)=\sum_{\sigma\in\MultiPart(\mu)}\left(z\delta_{\sigma,(1,\ldots,1)}1_\sigma+x_{\pi(\sigma)}1_\sigma\right)\prod_{j=0}^r\prod_{\eta\in\sigma_j}\frac{\eta}{z}I_{\nu(j)}^{\untw}(Q,\mathbf{s},-z/\eta).
  \end{align}
  It is now sufficient to prove:
  \begin{cl}\label{cl:OnUntwisted}
  Assuming Identities \ref{lem:WhatIsTauIdentity} and \ref{lem:WhatIsTauSigma}, $I^{\untw}_{\mu}(Q,\mathbf{x},\mathbf{s},-z)$ is a $\Lambda_{\nov}[[\mathbf x,\mathbf{s}]]/(\mathbf{x})^2$-valued point of the (untwisted) Lagrangian cone $\mathcal{L}_\mu.$
  \end{cl}
  If we prove Claim \ref{cl:OnUntwisted}, it will imply that $I_{\mu}(Q,0,\mathbf{x},-z)$ is a $\Lambda_{\nov}^T[[\mathbf{x}]]$-valued point of the $\mathbf{s}$-twisted Lagrangian cone of $BG_\mu$ for $\mathbf{s}$ as in \eqref{eq:sjik} --- which is precisely $\mathcal L_\mu^{\tw}$.
  \begin{proof}[Proof of claim \ref{cl:OnUntwisted}]
We prove a slightly stronger statement, replacing the variables $x_{\pi(\sigma)}$ in $I_\mu^{\untw}$ with new variables $x_\sigma$ in the obvious way. Define $\deg(s_k^{(j,i)})=k+1$. We will prove that $I_\mu^{\untw}$ is on $\mathcal{L}_\mu$ by induction on the degree. For the base case, \cite[Proposition 3.4]{JarvisKimura2002} shows that the $J$-function $J(Q,\mathbf{x},-z)\in\mathcal{L}_\mu$ is given by 
    \begin{align}\label{eq:JFunction}
    J(Q,\mathbf{x},-z)&=-z\exp(Q/(-z))\left(1+\sum_{\sigma\in\MultiPart(\mu)}\frac{x_\sigma}{-z}1_\sigma\right)\quad\quad\mod(\mathbf{x})^2.
    \end{align}
    That is, $I^{\untw}_{\mu}(Q,\mathbf{x},0,-z)=J(dQ,\mathbf{x},-z)\in\mathcal{L}_\mu.$
    
    For the inductive step, suppose that $I_\mu^{\untw}$ lies on $\mathcal{L}_\mu$ up to degree $M$ in the variables $s^{(j,i)}$. We will show that $I_\mu^{\untw}$ lies on $\mathcal{L}_\mu$ up to degree $M+1.$ It is sufficient to show that all derivatives $\pderiv{I_\mu^{\untw}}{s_k^{(j,i)}}$ lie in the tangent space $T_{I_\mu^{\untw}}\mathcal{L}_{\mu}$ up to degree $M$ \cite[p.393]{CoatesCortiIritaniTseng2009}. Let $-z+\mathbf{t}_\mu$ be the part of $I_\mu^{\untw}(Q,\mathbf{x},\mathbf{s},-z)$ with nonnegative $z$-exponents, and for $\sigma\in\MultiPart(\mu),$ define 
    \begin{align}\label{eq:TauDef}
    \tau^\sigma(\mathbf{t}_\mu)=\sum_{n\ge1}\frac{1}{n!}\left\langle1,1_\sigma,\mathbf{t}_\mu,\ldots,\mathbf{t}_\mu\right\rangle_{0,n+2}^{\mu}.
    \end{align}
     Then by \cite[Prop. B.4]{CoatesCortiIritaniTseng2009}, $T_{I_\mu^{\untw}}\mathcal{L}_{\mu}$ is freely generated as a $\Lambda_{\nov}[[\mathbf x,\mathbf{s},z]]/(\mathbf{x})^2$-module by the derivatives $\partial_\sigma J(\tau,-z)|_{\tau=\tau(\mathbf{t}_\mu)}$ with respect to the variables $Q$ and $x_\sigma$. From \eqref{eq:JFunction}, we have:
    \begin{align*}
    \partial_{(1,\ldots,1)}J(\tau,-z)|_{\tau=\tau(\mathbf{t}_\mu)}&=\exp(\tau^{(1,\ldots,1)}(\mathbf{t}_\mu)/(-z))\left(1+\sum_{\sigma\in\MultiPart(\mu)}\frac{\tau^{\sigma}(\mathbf{t}_\mu)}{-z}1_\sigma\right)\\
    \partial_{\sigma}J(\tau,-z)|_{\tau=\tau(\mathbf{t}_\mu)}&=\exp(\tau^{(1,\ldots,1)}(\mathbf{t}_\mu)/(-z))1_\sigma\quad\text{ for $\sigma\ne(1,\ldots,1)$.}\nonumber
    \end{align*}
We must therefore show that $\frac{1}{\exp(\tau^{(1,\ldots,1)}(\mathbf{t}_\mu)/(-z))}\pderiv{I_\mu^{\untw}}{s_{k_0}^{(i_0,j_0)}}$ is in the $\Lambda_{\nov}[[\mathbf x,\mathbf{s},z]]/(\mathbf{x})^2$-module generated by:
\begin{align}\label{eq:JFunctionDerivsDivide}
&1+\sum_{\sigma\in\MultiPart(\mu)}\frac{\tau^{\sigma}(\mathbf{t}_\mu)}{-z}1_\sigma&&\text{and}&&
    1_\sigma\quad\text{ for $\sigma\ne(1,\ldots,1)$},
    \end{align}
    for any $0\le i_0,j_0\le r$ with $i_0\ne j_0$ and any $k_0\ge0$.
    For convenience, define 
    \begin{align}\label{eq:FandG}
    f_{\mu,k_0}^{(i_0,j_0)}(-z):&=\frac{1}{\exp(\tau^{(1,\ldots,1)}(\mathbf{t}_\mu)/(-z))}\pderiv{I_\mu^{\untw}}{s_{k_0}^{(i_0,j_0)}}&&\text{and}&g_{\mu}(-z):&=\frac{I_\mu^{\untw}/z}{\exp(\tau^{(1,\ldots,1)}(\mathbf{t}_\mu)/(-z))}.
    \end{align}
    By \eqref{eq:RecursiveExpressionForI} and the product rule, we have
    \begin{align}\label{eq:ProductRule}
    \pderiv{I_\mu^{\untw}}{s_{k_0}^{(i_0,j_0)}}&=\sum_{\sigma\in\MultiPart(\mu)}\left(z\delta_{\sigma,(1,\ldots,1)}1_\sigma+x_{\sigma}1_\sigma\right)\sum_{\substack{0\le j_1\le r\\\eta_1\in\sigma_{j_1}}}\frac{\eta_1}{z}\pderiv{I_{\nu(j_1)}^{\untw}(Q,\mathbf{s},-z/\eta_1)}{s_{k_0}^{(i_0,j_0)}}\prod_{\substack{0\le j\le r\\\eta\in\sigma_j\\(j,\eta)\ne(j_1,\eta_1)}}\frac{\eta}{z}I_{\nu(j)}^{\untw}(Q,\mathbf{s},-z/\eta).
    \end{align}
    Observe that Identity \ref{lem:WhatIsTauIdentity} in Section \ref{sec:Appendix} implies
    \begin{align*}
    \tau^{(1,\ldots,1)}(\mathbf{t}_\mu)=\sum_{0\le j\le r}\mu_j\tau^{\rho(j)}(\mathbf{t}_{\nu(j)}),
    \end{align*}
     where $\nu(j)$ and $\rho(j)$ are as in \eqref{eq:RecursiveExpressionForI}.
    Thus \eqref{eq:ProductRule} implies:
    \begin{align}\label{eq:AlmostPolynomial}
    f_{\mu,k_0}^{(i_0,j_0)}(-z)&=\sum_{\sigma\in\MultiPart(\mu)}\left(z\delta_{\sigma,(1,\ldots,1)}1_\sigma+x_{\sigma}1_\sigma\right)\sum_{\substack{0\le j_1\le r\\\eta_1\in\sigma_{j_1}}}\frac{\eta_1}{z}f_{\nu(j_1),k_0}^{(i_0,j_0)}(-z/\eta_1)\prod_{\substack{0\le j\le r\\\eta\in\sigma_j\\(j,\eta)\ne(j_1,\eta_1)}}g_{\nu(j)}(-z/\eta)\\
    &=\sum_{\sigma\in\MultiPart(\mu)}\left(z\delta_{\sigma,(1,\ldots,1)}1_\sigma+x_{\sigma}1_\sigma\right)\left(\prod_{\substack{0\le j\le r\\\eta\in\sigma_j}}g_{\nu(j)}(-z/\eta)\right)\sum_{\eta\in\sigma_{j_0}}\frac{\eta}{z}\frac{f_{\nu(j_0),k_0}^{(i_0,j_0)}(-z/\eta)}{g_{\nu(j_0)}(-z/\eta)}.\nonumber
    \end{align}
    (In the second equality, we have used the fact that $f_{\nu(j_1),k_0}^{(i_0,j_0)}(-z/\eta)=0$ if $j_0\ne j_1$, which is immediate from the definition of $I_\mu^{\untw}$.)
    
    By the mirror theorem for $\Sym^1\P^r=\P^r$ (specifically, the proof on \cite[p.31]{CoatesCortiIritaniTseng2015}), we have $I_{\nu(j)}^{\untw}\in\mathcal{L}_{\nu(j)}$. Thus $\frac{1}{z}I_{\nu(j)}^{\untw}\in T_{I_{\nu(j)}^{\untw}}\mathcal{L}_{\mu'},$ by the tangent space property of $\mathcal{L}_{\nu(j)}$ \cite[Thm. 1]{Givental2004}. In particular, $\frac{1}{z}I_{\nu(j)}^{\untw}(Q,\mathbf{s},-z/\eta)$ is divisible by $\exp(-\eta\tau^{\rho(j)}(\mathbf{t}_{\nu(j)})/z),$ where by ``divisible'', we mean that the ratio contains only nonnegative powers of $z$; in other words, $g_{\nu(j)}(-z/\eta)$ contains only nonnegative powers of $z$. Similarly, because $I_{\nu(j)}^{\untw}\in\mathcal{L}_{\nu(j)},$ we have that $f_{\nu(j_1),k_0}^{(i_0,j_0)}(-z/\eta_1)$ contains only nonnegative powers of $z$. That is, the first line of \eqref{eq:AlmostPolynomial} implies $f_{\mu,k_0}^{(i_0,j_0)}(-z)$ is of the form: $$1\cdot(\text{power series in $z$})+\sum_{\sigma\in\MultiPart(\mu)}\frac{x_\sigma 1_\sigma}{-z}\cdot(\text{power series in $z$})+O(\mathbf{x})^2.$$
    
    In order to show $f_{\mu,k_0}^{(i_0,j_0)}(-z)$ is in the $\Lambda_{\nov}[[\mathbf x,\mathbf{s},z]]/(\mathbf{x})^2$-module generated by \eqref{eq:JFunctionDerivsDivide}, it remains to show that for all $\sigma\in\MultiPart(\mu)$ with $\sigma\ne(1,\ldots,1),$ the coefficient of $z^{-1}\cdot1_\sigma$ in \eqref{eq:AlmostPolynomial} is equal to $-\tau^{\sigma}(\mathbf{t}_\mu)$ times the coefficient of $z^{0}\cdot1$ in \eqref{eq:AlmostPolynomial}. In other words, we must show: $$x_\sigma\left(\prod_{\substack{0\le j\le r\\\eta\in\sigma_j}}g_{\nu(j)}(0)\right)\sum_{\eta\in\sigma_{j_0}}\eta\frac{f_{\nu(j_0),k_0}^{(i_0,j_0)}(0)}{g_{\nu(j_0)}(0)}=\tau^{\sigma}(\mathbf{t}_\mu)\left(\prod_{\substack{0\le j\le r\\1\le a\le\mu_j}}g_{\nu(j)}(0)\right)\sum_{1\le a_1\le\mu_{j_0}}\frac{f_{\nu(j_0),k_0}^{(i_0,j_0)}(0)}{g_{\nu(j_0)}(0)}.$$ After cancellation, this is precisely the statement of Identity \ref{lem:WhatIsTauSigma}.
    \end{proof}
    Claim \ref{cl:OnUntwisted} completes the proof of \ref{item:TwistedCone}, and hence of Theorem \ref{thm:MirrorTheorem}.
  \end{proof}
  
  From \eqref{eq:IDef}, we compute that (assuming $r>0$) we have $$I_{\Sym^d\P^r}(Q,t,\mathbf{x},z)=1\cdot z+\sum_{i=0}^rt_iH_i+\sum_{\sigma\in\MultiPart(\mu)}x_\sigma1_\sigma+O(z^{-1}),$$ where $[H_i]$ is as in Section \ref{sec:SymmetricPowers}. By definition of $J_{\Sym^d\P^r}$ (from Section \ref{sec:OrbifoldGWTheory}), Theorem \ref{thm:MirrorTheorem} implies:
\begin{cor}\label{cor:IandJ}
  Assuming Identities \ref{lem:WhatIsTauIdentity} and \ref{lem:WhatIsTauSigma}, we have $$I_{\Sym^d\P^r}(Q,t,\mathbf{x},z)=J_{\Sym^d\P^r}(Q,\theta,z)\quad\mod(\mathbf{x})^2,$$ where $\theta=\sum_{i=0}^rt_iH_i+\sum_{\sigma\in\MultiPart(\mu)}x_\sigma1_\sigma.$
\end{cor}

      \section{Appendix: Conjectural Combinatorial Identities}\label{sec:Appendix}
      The Mirror Theorem \ref{thm:MirrorTheorem} is conditional upon the following two conjectural combinatorial identities. Let $\mathbf{t}_\mu$ denote the part of $I^{\untw}_{\mu}$ with nonnegative powers of $z$, and let $\tau^\sigma(\mathbf{t}_\mu)$ be as in \eqref{eq:TauDef}. We conjecture the following:
\begin{identity}\label{lem:WhatIsTauIdentity}
  For all $\mu\ge1,$ we have $$\tau^{(1,\ldots,1)}(\mathbf{t}_\mu)=\sum_{0\le j\le r}\mu_j\sum_{n_0,n_1,\ldots\ge0}\frac{(1+\sum_{k\ge0}kn_k)^{-2+\sum_{k\ge0}n_k}}{\prod_{k\ge0}n_k!(k!)^{n_k}}Q^{1+\sum_{k\ge0}kn_k}\prod_{k\ge0}\left(\sum_{\substack{0\le i\le r\\i\ne j}}s_k^{(j,i)}\right)^{n_k}+O(\mathbf{x}^2).$$
  \end{identity}
   \begin{identity}\label{lem:WhatIsTauSigma}
  Let $\mu\ge 1$, and let $\sigma$ be a partition of $\mu$ that is \emph{not} equal to $(1,\ldots,1).$ Then
  \begin{align*}
  \tau^{\sigma}(\mathbf{t}_{\mu})&=x_\sigma\prod_{0\le j\le r}g_{\nu(j)}(0)^{\abs{\sigma_j}-\mu_j}+O(\mathbf{x})^2,
  \end{align*}
  where $g_\mu(-z)$ is as in \eqref{eq:FandG}. (Recall that $\nu(j)\in\ZPart(1,r+1)$ is the composition with 1 in the $j$th entry, and $\rho(j)$ is the unique multipartition of $\nu(j).$)
  \end{identity}
  Both identities are entirely combinatorial in nature, as $\tau^{(1,\ldots,1)}(\mathbf{t}_\mu)$, $\tau^{\sigma}(\mathbf{t}_{\mu})$, and $g_{\nu(j)}(0)$ are entirely explicit. Specifically, one uses the formulas \cite[Lemma 1.5.1]{KockNotes} and \cite[Prop. 3.4]{JarvisKimura2002}, both of which we have already used extensively in this paper, to evaluate the integrals appearing in $\tau^{(1,\ldots,1)}(\mathbf{t}_\mu)$ and $\tau^{\sigma}(\mathbf{t}_{\mu})$ in terms of multinomial coefficients. The resulting expressions for $\tau^{(1,\ldots,1)}(\mathbf{t}_\mu)$ and $\tau^{\sigma}(\mathbf{t}_{\mu})$ are iterated sums over partitions.
  
  We expect that both identities can be proved via cleverly switching the order of summation, and applying basic multinomial coefficient identities or generating function techniques. (In the introduction we speculated that tools from integrable systems might also yield a less-hands-on proof.) However, due to the complicatedness of the generating functions involved, we were not able to complete either proof. We instead conclude with some relevant observations and experimental verifications of both identities to small order.

\medskip

    \noindent\textbf{Notes about Identity \ref{lem:WhatIsTauIdentity}:}
    \begin{enumerate}
     \item The variables $x_\sigma$ are entirely absent from Identity \ref{lem:WhatIsTauIdentity}, as follows. The invariants appearing in $\tau^{(1,\ldots,1)}(\mathbf{t}_\mu)$ are of the form $$\left\langle1,1,\mathbf{t}_{\mu},\ldots,\mathbf{t}_{\mu}\right\rangle_{0,n+2}^{\mu}.$$ For $\sigma\ne(1,\ldots,1)$, the class $1_\sigma$ always appears with an $x_\sigma$ factor in $\mathbf{t}_{\mu}$. As we are working modulo $(\mathbf{x})^2,$ the only contributions are from invariants with at most one nontrivial class $1_\sigma.$ By \cite[Prop. 3.4]{JarvisKimura2002}, invariants with exactly one nontrivial class $1_\sigma$ vanish. Thus we may replace $\mathbf{t}_{\mu}$ in Identity \ref{lem:WhatIsTauIdentity} with $\mathbf{t}_\mu^0,$ where $\mathbf{t}_{\mu}^0$ denotes the coefficient of $1$ in $\mathbf{t}_{\mathbf{\mu}}$.\label{TZero}
     
     \item The difficulty in proving Identity \ref{lem:WhatIsTauIdentity} is not due to the presence of Bernoulli numbers in the definition of $I_{\mu}^{\untw}$; in fact the Bernoulli numbers appear to play no role whatsoever, as Identity \ref{lem:WhatIsTauIdentity} is a special case of the following more general formula. Let 
     \begin{align}\label{eq:MoreGeneralF}
     \mathbf{f}=-z\prod_{0\le j\le r}\left(\sum_{\beta\ge0}\frac{Q^\beta}{\beta!(-z)^\beta}\exp\left(\sum_{\substack{k\ge0\\0\le\ell\le k+1}}c_{k,\ell}^{(j)}z^k\beta^\ell\right)\right)^{\mu_j}.
     \end{align} Then experimentally we appear to have
 \begin{align}\label{eq:MoreGeneralTau}
 \tau^{(1,\ldots,1)}(\mathbf{t}_{\mathbf{f}})&=-\sum_{0\le j\le r}\mu_j\sum_{n_0,n_1,\ldots\ge0}\left(1+\sum_{k\ge0}kn_k\right)^{-2+\sum_{k\ge0}n_k}(-Q)^{1+\sum_{k\ge0}kn_k}\prod_{k\ge0}\frac{((k+1)c_{k,k+1}^{(j)})^{n_k}}{n_k!},
 \end{align}
 where $-z+\mathbf{t}_{\mathbf{f}}$ denotes the part of $\mathbf{f}$ with nonnegative powers of $z$. Using Note \eqref{TZero}, Identity \ref{lem:WhatIsTauIdentity} is the special case where $$c_{k,\ell}^{(j)}=\sum_{i\ne j}\frac{B_{k-\ell+1}(0)}{\ell!(k-\ell+1)!}s_{k}^{(j,i)}.$$ (Indeed, the absence of $c_{k,\ell}$ for $\ell\le k$ in \eqref{eq:MoreGeneralTau} shows that the Bernoulli numbers $B_m(0)$ for $m>0$ are entirely irrelevant.) Note that we require $\ell\le k+1$ in \eqref{eq:MoreGeneralF} because, in the expression $G^{(j,i)}(-\beta z/\eta,z/\eta)$ from \eqref{eq:GDef}, the power of $\beta$ is always at most one more than the power of $z$. \label{Bernoulli}

\item Writing $$\mathbf{t}_{\mu}^0=1\cdot(y_0+y_1z+y_2z^2+\cdots),$$ and evaluating the integrals $$\left\langle1,1,\mathbf{t}_{\mu}^0,\ldots,\mathbf{t}_{\mu}^0\right\rangle_{0,n+2}^{\mu},$$ gives the expression
   \begin{align*}
  \tau^{(1,\ldots,1)}(\mathbf{t}_{\mu}^0)&=\sum_{n=1}^\infty\sum_{\zeta\in\ZPart(n-1,n)}\frac{1}{\abs{S_\zeta}}\binom{n-1}{\zeta}\prod_{\eta\in\zeta}y_\eta.
  \end{align*}
We may compute each $y_i$ explicitly. For example, if we set every $c_{k',\ell'}$ to zero \emph{except for one}, say $c_{k,\ell},$ and we have $\mu_j=0$ for all but one $j$, then we have:
 \begin{align*}
  y_\eta=(-1)^\eta\mu_j!\sum_{\substack{N\ge\eta/k}}(c_{k,\ell}^{(j)})^N\frac{Q^{kN-\eta+1}}{N!(kN-\eta+1)!}\sum_{\pi\in\ZPart(kN-\eta+1,\mu_j)}\binom{kN-\eta+1}{\pi}\frac{\left(\sum_{\lambda\in\pi}\lambda^\ell\right)^N}{\abs{S_\pi}}.
  \end{align*}
  \label{Explicit}
  \item Some straightforward combinatorial identities can be used to expand $\tau^{(1,\ldots,1)}(\mathbf{t}_\mu)$ as a polynomial in $\mu$, whose coefficients are sums over partitions. Experimentally, the coefficient of $\mu^m$ ``miraculously'' cancels to give zero whenever $m>1$. (Indeed, proving this ``linearity'' would suffice for the purpose of Theorem \ref{thm:MirrorTheorem}; we do not need the explicit formula.)
  \item Figure \ref{Id7.1} verifies Identity \ref{lem:WhatIsTauIdentity} for $\mu=(1,0,\ldots)$ and $\mu=(3,0,\ldots)$, to second order in $s_k=\sum_{1\le i\le r}s_k^{(i,0)}$ for $k\le 2$ (and to zeroth order in $s_k$ for $k>2$).
  \begin{figure}
  \includegraphics[width=5in]{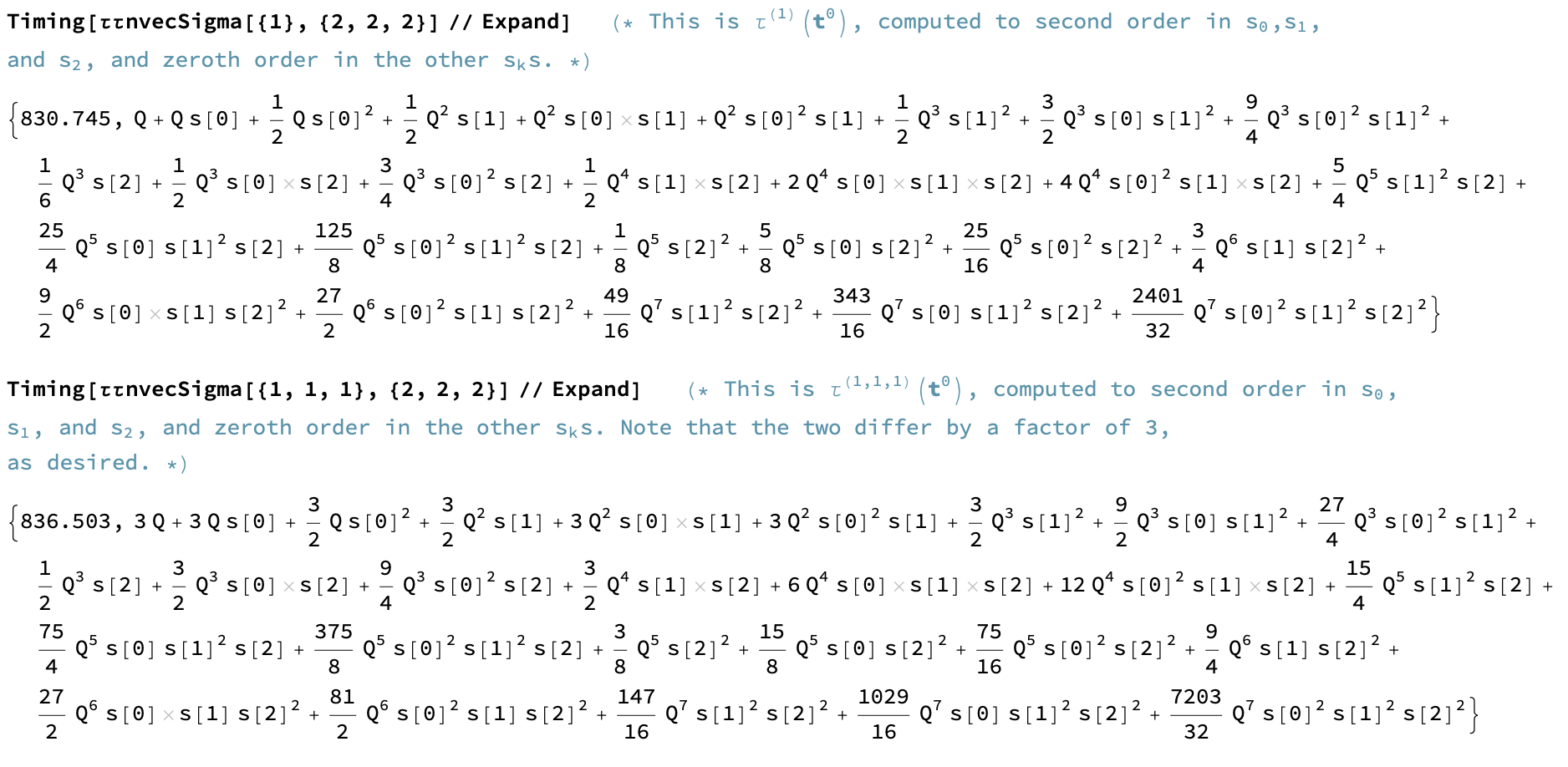}
  \caption{Experimental verification of Identity 7.1}\label{Id7.1}
  \end{figure}
    \end{enumerate}
 
  \medskip
  
  \noindent\textbf{Notes about Identity \ref{lem:WhatIsTauSigma}:}
  \begin{enumerate}\setcounter{enumi}{5}
  \item The same argument as in Note \eqref{TZero} shows that $\tau^\sigma(\mathbf{t}_\mu)\in x_\sigma\cdot\C[[Q,\{s_k^{(j,i)}\}]]+O(\mathbf{x}^2)$. As in Note \eqref{Explicit}, it is straightforward to expand $\tau^\sigma(\mathbf{t}_\mu)$ as an explicit sum over partitions.
  \item Unlike in Note \eqref{Bernoulli}, the Bernoulli numbers do appear to play a nontrivial role in Identity \ref{lem:WhatIsTauSigma}.
  \item Figure \ref{Id7.2} verifies Identity \ref{lem:WhatIsTauSigma} for $\sigma=\{4\},$ $\sigma=\{4,1\}$, and $\sigma=\{3,2\}$, to the same orders in $s_k$ as above. Observe in particular that Identity \ref{lem:WhatIsTauSigma} predicts that $\tau^\sigma(\mathbf{t}_\mu)$ is identical for these three choices of $\sigma$. (By $\sigma=\{4,1\}$, we really mean $\mu=(5,0\ldots,)$ and $\sigma=(\{4,1\},\{\},\ldots).$)
  \begin{figure}
  \includegraphics[width=5in]{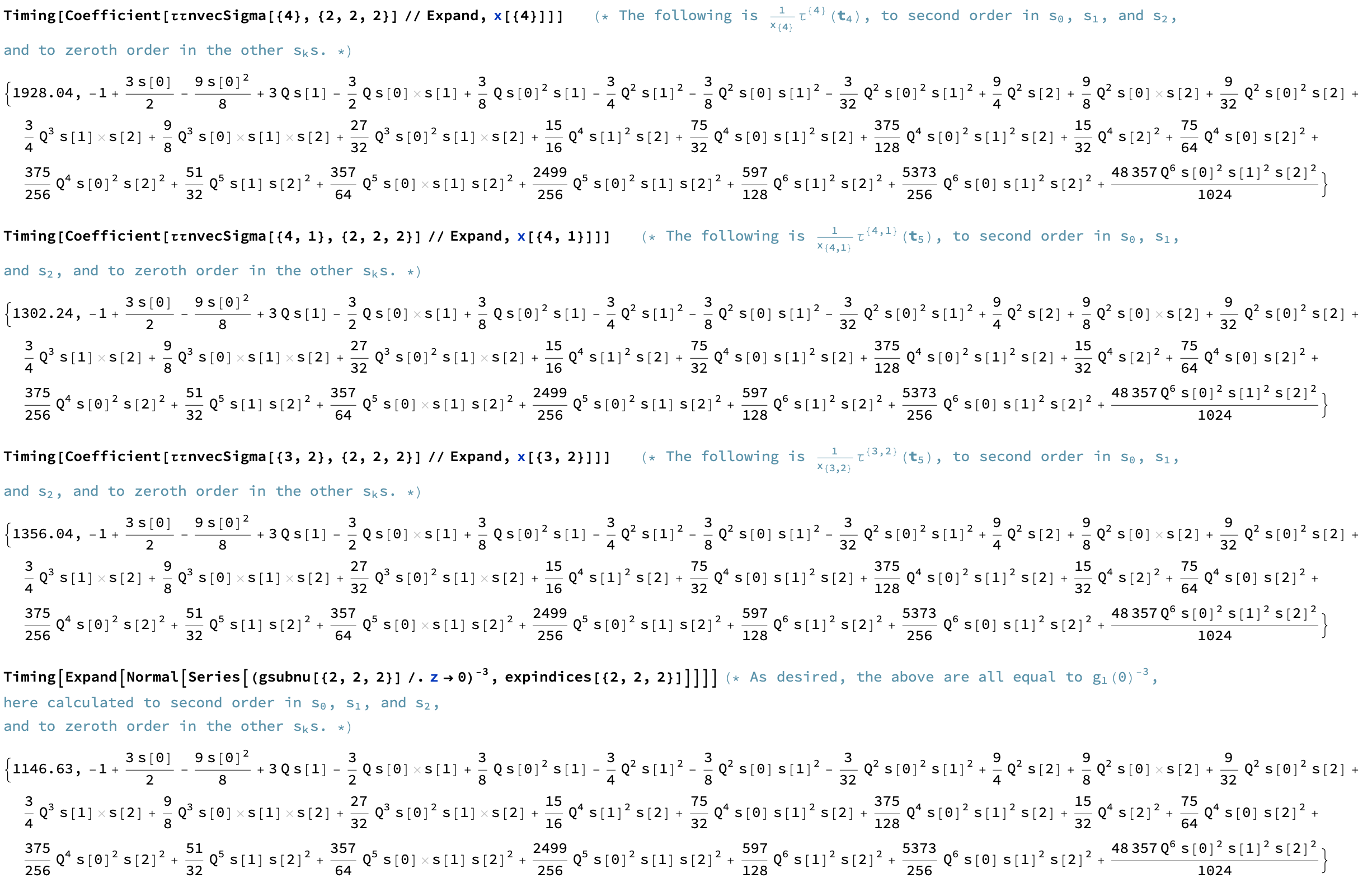}
  \caption{Experimental verification of Identity 7.2}\label{Id7.2}
  \end{figure}
  \end{enumerate}

\bibliographystyle{alpha}
\bibliography{Symrefs.bbl}

\end{document}